\newcommand{\nupla}{\mathcal}
\numberwithin{equation}{section}
\theoremstyle{plain}
	\newtheorem{theorem}{Theorem}[section]
	\newtheorem{lemma}[theorem]{Lemma}	\newtheorem{proposition}[theorem]{Proposition}
\newtheorem{corollary}[theorem]{Corollary}
\theoremstyle{definition}
\newtheorem{definition}[theorem]{Definition}
\newtheorem{remark}[theorem]{Remark}
\newcommand{\lstort}{{\mathscr{L}}}
\newcommand{\de}{\,\mathrm{d}}
\newcommand{\E}{{\mathcal E}}
\newcommand{\eps}{\varepsilon}
\newcommand{\loc}{{\mathrm{loc}}}
\newcommand{\N}{\mathbb{N}}
\newcommand{\ort}{\R^N_{+}}
\newcommand{\R}{\mathbb{R}}
\DeclareMathOperator{\argmin}{arg\, min}
\DeclareMathOperator{\per}{{\mathrm{Per}}}
\DeclarePairedDelimiter{\scalar}{\langle}{\rangle}
\DeclarePairedDelimiter{\set}{\{}{\}}
\begin{document}

\title{On the $N$-Cheeger problem for component-wise increasing norms}

\author[G.~Saracco]{Giorgio Saracco}
\address[G.~Saracco]{Dipartimento di Matematica e Informatica ``Ulisse Dini'' (DIMAI), Università degli Studi di Firenze, Viale Morgagni 67/A, 50134 Firenze (FI), Italy}
\email{giorgio.saracco@unifi.it}

\author[G.~Stefani]{Giorgio Stefani}
\address[G.~Stefani]{Scuola Internazionale Superiore di Studi Avanzati (SISSA), via Bonomea 265, 34136 Trieste (TS), Italy}
\email{gstefani@sissa.it {\normalfont or} giorgio.stefani.math@gmail.com}

\date{\today}

\keywords{partition, isoperimetric, spectral, Cheeger constant, Dirichlet eigenvalue}

\subjclass[2020]{Primary 49Q20. Secondary 35P30, 49Q10}

\thanks{\textit{Acknowledgments}.
The authors are members of the Istituto Nazionale di Alta Matematica (INdAM), Gruppo Nazionale per l'Analisi Matematica, la Probabilit\`a e le loro Applicazioni (GNAMPA).
The authors have received funding from INdAM under the INdAM--GNAMPA Project 2024 \textit{Ottimizzazione e disuguaglianze funzionali per problemi geometrico-spettrali locali e non-locali}, codice CUP\_E53\-C23\-001\-670\-001.
The first-named author has received funding from INdAM under the INdAM--GNAMPA Project 2023 \textit{Esistenza e propriet\`a fini di forme ottime}, codice CUP\_E53\-C22\-001\-930\-001, and from Universit\`a di Trento (UNITN) under the Starting Grant Giovani Ricercatori 2021 project \textit{WeiCAp}, codice CUP\_E65\-F21\-00\-41\-60\-001.
The second-named author has received funding from INdAM under the INdAM--GNAMPA 2023 Project \textit{Problemi variazionali per funzionali e operatori non-locali}, codice CUP\_E53\-C22\-001\-930\-001, and from the European Research Council (ERC) under the European Union’s Horizon 2020 research and innovation program (grant agreement No.~945655).
The present research started during a visit of the first-named author at the Scuola Internazionale Superiore di Studi Avanzati (SISSA). The first-named author wishes to thank the institution for its kind hospitality.}

\begin{abstract}
We study Cheeger and $p$-eigenvalue partition problems depending on a given evaluation function $\Phi$ for $p\in[1,\infty)$.
We prove existence and regularity of minima, relations between the problems, convergence, and stability with respect to $p$ and to $\Phi$.
\end{abstract}

\hspace*{-2em}
{
\begin{minipage}{0.71\textwidth}
\vspace{-20ex}
{\tiny This is a preprint of the article available at DOI \href{https://doi.org/10.1016/j.matpur.2024.06.008}{10.1016/j.matpur.2024.06.008}. Due to a miscommunication, we received one report \emph{after} publication. This version incorporates the report’s feedback and corrects the statement of \cref{res:choose_t_i} by adding a missing hypothesis.\par}
\end{minipage} 
}

\maketitle

\section{Introduction}

\subsection{Cheeger problem}

The \emph{Cheeger constant} of a measurable set $\Omega\subset\R^d$ is defined as
\begin{equation}
\label{eqi:cheeger}    
h(\Omega) = \inf\set*{\frac{\per(E)}{|E|}: E\subset \Omega,\ |E|>0},
\end{equation}
where $\per(E)$ and $|E|$ are the distributional perimeter (refer to~\cite{Mag12book}) and  the $d$-di\-men\-sio\-nal Lebesgue measure of $E$, respectively.
The study of~\eqref{eqi:cheeger} has drawn a lot of attention in the past decades, see~\cites{Leo15,Par11,FPSS24} for an exhaustive presentation.

Even though~\eqref{eqi:cheeger} is purely geometrical in the stated form, $h(\Omega)$ has remarkable spectral properties. 
As noticed for $p=2$ by Maz'ya~\cites{Gri89,Maz62-1,Maz62-2} in $\R^d$ and by Cheeger~\cite{Che70} on a Riemannian $d$-manifold, and later extended to any $p\in(1,\infty)$ in~\cites{Avi97, LW97},
\[
\left(\frac{h(\Omega)}{p}\right)^p \le \lambda_{1,p}(\Omega),
\]
where 
\begin{equation}
\label{eqi:eigen}
\lambda_{1,p}(\Omega)    
=
\inf\set*{\|\nabla u\|_{L^p} : u\in W^{1,p}_0(\Omega),\ \|u\|_{L^p}=1}
\end{equation}
is the \emph{first eigenvalue of the Dirichlet $p$-Laplacian} on~$\Omega$. 
Actually, as proved in~\cite{KF03},
\begin{equation}
\label{eqi:limit}
h(\Omega) = \lim_{p\to 1^+} \lambda_{1,p}(\Omega),
\end{equation}
so that $h(\Omega)$ may be thought of as the \emph{first eigenvalue of the Dirichlet $1$-Laplacian} on~$\Omega$.

\subsection{Partition problems}

A natural extension of~\eqref{eqi:cheeger} consists in finding \emph{clusters} of $\Omega$ that minimize a combination of their isoperimetric ratios, see~\cites{FPS23a,FPS23b,NPST22,NPST23}.

Given $N\in\N$, an \emph{$N$-set} of $\Omega$ is an $N$-tuple $\nupla E = (\nupla E_1,\dots,\nupla E_N)$ of pairwise disjoint subsets with positive measure $\nupla E_i\subset\Omega$, called \emph{chambers} of~$E$.
If, in addition, $\per(\nupla E_i)<\infty$ for each $i=1,\dots,N$, then $\nupla E$ is an \emph{$N$-cluster} of $\Omega$.
Given a \emph{reference function} $\Phi\colon\ort \to [0,\infty)$ (for instance, any $q$-norm in $\R^N$), we consider
\begin{equation}
\label{eqi:H}
\tag{$H$}
H^{\Phi, N}(\Omega) = \inf\set*{\Phi\left(\frac{\per(\nupla E_1)}{|\nupla E_1|},\dots,\frac{\per(\nupla E_N)}{|\nupla E_N|}\right) : \nupla E\ \text{is an $N$-cluster of}\ \Omega}.
\end{equation}

Concerning spectral analogs of~\eqref{eqi:H}, we have two possible natural formulations.
On the geometric side, for any $p\in[1,\infty)$, we introduce
\begin{equation}
\label{eqi:Lstort}
\tag{$\lstort_p$}
\lstort^{\Phi,N}_{1,p}(\Omega)
=
\inf\set*{
\Phi\left(\lambda_{1,p}(\nupla E_1),\dots, \lambda_{1,p}(\nupla E_N)\right) : \nupla E\ \text{is an $N$-set of}\ \Omega},
\end{equation}
while, on the functional side, having in mind~\eqref{eqi:eigen}, we consider 
\begin{equation}
\label{eqi:L}
\tag{$\Lambda_p$}
\Lambda^{\Phi,N}_{1,p}(\Omega)
=
\inf\set*{
\Phi\left(\|\nabla \nupla u_1\|_p^p, \dots, \|\nabla \nupla u_N\|_p^p\right) : \nupla u = (\nupla u_1, \dots, \nupla u_N)},
\end{equation}
where the infimum runs on $N$-tuples $\nupla u = (\nupla u_1, \dots, \nupla u_N)$ of functions in $W^{1,p}_0(\Omega)$ (or $BV_0(\Omega)$, for $p=1$) functions with pairwise disjoint supports and unitary $p$-norm.

\subsection{Previous results and main aim}

We are interested in studying~\eqref{eqi:H}, \eqref{eqi:Lstort} and~\eqref{eqi:L} and their relations under minimal assumptions on the reference function $\Phi$. 

Some partial results are already available in the literature. 
For the supremum norm, i.e., $\Phi= \|\cdot\|_\infty$, similarly to~\eqref{eqi:limit}, the convergence of~\eqref{eqi:Lstort} to~\eqref{eqi:H} as $p\to1^+$ is established in~\cite{BP18}, while several properties of the constant $H^{\Phi, N}(\Omega)$ are studied in~\cite{P10} (see also the recent work~\cite{FR23}).
For the $1$-norm $\Phi=\|\cdot\|_1$, the equivalence between~\eqref{eqi:H} and~\eqref{eqi:L} for $p=1$, as well as the relation between the superlevel sets of minimizers of~\eqref{eqi:L} with clusters minimizing~\eqref{eqi:H}, are proved in~\cite{CL19}, while the behavior of $H^{\Phi,N}(\Omega)$ as $N\to\infty$ is studied in~\cite{BF19}.

In passing, we mention that similar problems are considered in~\cite{BBF20}, where the Cheeger constant is replaced by the \emph{$\alpha$-Cheeger constant} for some $\alpha>0$, that is, the infimum on ratios of perimeter over the $\alpha$-th power of the volume (see~\cite{PS17} for an account).

Our main aim is to extend the results of~\cites{BP18,CL19} to general reference functions, that may not even be norms. Moreover, we prove the stability of the constants as the reference function $\Phi$ changes, bridging the gap between the available results.

Even though we work in the Euclidean space, most of the results can be extended within the abstract framework of~\cite{FPSS24}. For the sake of completeness, at the end of every section, we remark how our results can be extended to more general settings.

\subsection{Organization of the paper}

In \cref{sec:notation}, we set the notation and the basic definitions. 
In particular, we list the assumptions on the reference function $\Phi$  we will use throughout the paper (see \cref{ssec:reference_phi}). 
In \cref{sec:p=1}, we study the equivalence between~\eqref{eqi:H} and~\eqref{eqi:Lstort} for $p=1$, also providing regularity properties for their minimizers.   
In \cref{sec:caroccia-littig}, we study~\eqref{eqi:L} for $p=1$, extending the results of~\cite{CL19} to a general reference function $\Phi$ and proving boundedness of minimizers.
In \cref{sec:bobkov-parini}, we prove the equivalence between~\eqref{eqi:Lstort} and~\eqref{eqi:L} for $p>1$ and the boundedness of minimizers of~\eqref{eqi:L}.
We also generalize the convergence result of~\cite{BP18} as $p\to1^+$, both of the constants and their minimizers.
Lastly, in \cref{sec:stability}, we tackle the stability of~\eqref{eqi:H}, \eqref{eqi:Lstort} and~\eqref{eqi:L} with respect to a varying family of reference functions, proving convergence of the constants and their minimizers under natural equicoercivity assumptions.

\section{Notation and definitions}
\label{sec:notation}

\subsection{Notation}
Given $d\in \N$, we let $|E|$ and $\per(E)$ be the $d$-dimensional Lebesgue measure and the Euclidean perimeter of a Lebesgue measurable set $E\subset\R^d$, respectively. For the theory of sets of finite perimeter, we refer the reader to~\cite{Mag12book}.

We stress that, throughout the paper, we consider Lebesgue measurable sets only, and set inclusions are always meant in the measure-theoretic sense, i.e., $E\subset F$ if $|F\setminus E|=0$. Moreover, we shall always let $\Omega\subset\R^d$ be a fixed non-empty, bounded, and open set.

Given $N\in\N$ and $p\in[1,\infty]$, we let $\|\cdot\|_p\colon\R^N\to[0,\infty)$ be the usual $p$-norm on~$\R^N$, that is, for any $\nupla v\in\R^N$,
\begin{equation}
\label{eq:p_norm}
\|\nupla v\|_p
=
\begin{cases}
\left(\sum_{i=1}^N|\nupla v_i|^p\right)^{\frac1p},
&
\quad
p\in[1,\infty),
\\[1ex]
\max\set*{|\nupla v_i| : i=1,\dots, N},
&
\quad
p=\infty.
\end{cases}
\end{equation}
We consider the cone of $N$-vectors with non-negative components  
\begin{equation*}
\ort=\set*{\nupla v\in\R^N : \nupla v_i\ge0\ \text{for}\ i=1,\dots,N},
\end{equation*}
and we partially order its elements component-wise, that is, given $\nupla v,\nupla w\in\ort$,
\begin{align*}
\nupla v\le\nupla w, \qquad &\text{if $\nupla v_i\le\nupla w_i$ for all $i=1,\dots,N$,}
\\
\nupla v<\nupla w, \qquad &\text{if $\nupla v\le\nupla w$ and $\nupla v_i<\nupla w_i$ for some $i\in\set*{1,\dots,N}$.}
\end{align*} 
In particular, $\nupla v\ge 0$ for all $\nupla v\in\ort$.

\subsection{Cheeger constant}

We recall the following standard definition. 

\begin{definition}[Cheeger constant]
\label{def:h}
The \emph{Cheeger constant} of $F\subset\R^d$ is 
\begin{equation*}
h(F)= \inf\set*{
\frac{\per(E)}{|E|} : E\subset F,\ |E|>0}\in[0,\infty].
\end{equation*}
Any set $E\subset F$ with $|E|>0$ achieving the infimum is called a \emph{Cheeger set} of $F$.
\end{definition}

Note that $h(F)<\infty$ whenever $F\subset\R^d$ contains a viable competitor, i.e., $E\subset F$ with $\per(E)<\infty$ and $|E|>0$.
In particular, letting $\Omega$ be a non-empty, bounded, open set, one has $h(\Omega)<\infty$, since we may consider any ball contained in $\Omega$ as a viable competitor.

\subsection{\texorpdfstring{$N$}{N}-sets and \texorpdfstring{$N$}{N}-clusters}

Here we define the competitors considered in the paper.

\begin{definition}[$N$-set and $N$-cluster]
\label{def:cluster}
Given $F\subset\R^d$, an $N$-tuple
$\nupla E=(\nupla E_1, \dots, \nupla E_N)$
is an \emph{$N$-set} of $F$ if
$\nupla E_i\subset F$, $|\nupla E_i|>0$ and $|\nupla E_i\cap \nupla E_j|=0$ for $i\neq j$, and $i,j=1,\dots,N$. We shall call each $\nupla E_i$ a \emph{chamber} of the $N$-set $\nupla E$.

If additionally the perimeter of each chamber is finite, i.e., $\per(\nupla E_i)<\infty$ for $i=1,\dots,N$, we say that the $N$-set $\nupla E$ is an \emph{$N$-cluster} of $F$.
\end{definition}

Note that any set $F\subset\R^d$ with $|F|>0$ admits $N$-sets for any $N\in\N$. Furthermore, any non-empty, bounded, open set $\Omega$ admits $N$-clusters for any $N\in\N$, as one can consider the $N$-tuple $\nupla E$ given by $N$ disjoint balls contained in $\Omega$. 

Given any $N$-set $\nupla E$ of a set $F\subset\R^d$ as in \cref{def:cluster}, we let 
\begin{equation}\label{eq:def_M_i}
F^{\nupla E}_i= \bigcup_{j\neq i} \E_j
\quad
\text{for each}\
i=1,\dots,N.
\end{equation}
Note that $F_i^{\nupla E}\subset F$ and $\nupla E_i\subset F\setminus F_i^{\nupla E}$ for each $i=1,\dots,N$. The following definition was first introduced in~\cite{BP18}.

\begin{definition}[$1$-adjusted $N$-cluster]\label{def:1-adj}
An $N$-cluster $\nupla E$ of a set $F\subset\R^d$ is \emph{$1$-adjusted} if 
\begin{equation*}
\frac{\per(\nupla E_i)}{|\nupla E_i|} = h(F\setminus F_i^{\nupla E})
\quad
\text{for each}\
i=1,\dots,N.
\end{equation*}
\end{definition}

\begin{remark}\label{rem:1-adj_self}
If the $N$-cluster $\nupla E$ of a set $F\subset\R^d$ is $1$-adjusted as in \cref{def:1-adj}, then
\begin{equation}
\label{eq:1-adj_self}
h(\nupla E_i)
=
\frac{\per(\nupla E_i)}{|\nupla E_i|}
\quad
\text{for each}\
i=1,\dots,N.
\end{equation}
Indeed, since $\nupla E_i\subset F\setminus F^{\nupla E}_i$ and due to the monotonicity of the Cheeger constant with respect to set inclusions (see~\cite{Leo15}*{Prop.~3.5(i)}), we can write
\begin{equation}\label{eq:1_adj_h_chamber=complement}
\frac{\per(\nupla E_i)}{|\nupla E_i|} 
= 
h(F\setminus F_i^{\nupla E}) 
\le 
h(\E_i) 
\le 
\frac{\per(\nupla E_i)}{|\nupla E_i|},
\end{equation}
thus all inequalities are equalities.
In particular, each $\nupla E_i$ is a Cheeger set of $F\setminus F^{\nupla E}_i$ (and of itself, of course).
\end{remark}

\subsection{Reference function}
\label{ssec:reference_phi}

Throughout the paper, we let $\Phi\colon \ort \to [0, \infty)$ be the \emph{reference function}. From time to time, we will require $\Phi$ to possess some of the following properties:

\begin{enumerate}[label=(\textPhi.\arabic*)]

\item    
\label{p:lsc}
$\Phi$ is lower semicontinuous;

\item 
\label{p:c}
$\Phi$ is \emph{coercive}, i.e., there exists $\delta>0$ such that $\Phi(\nupla v)\ge\delta\|\nupla v\|_1$ for all $\nupla v\in\ort$;
    
\item
\label{p:m} 
$\Phi$ is \emph{increasing}, i.e., if $\nupla v,\nupla w\in\ort$ with $\nupla v\le\nupla w$, then $\Phi(\nupla v)\le\Phi(\nupla w)$.
 
\end{enumerate}

Properties \ref{p:lsc} and \ref{p:c} are quite natural to impose when dealing with existence results, as they guarantee lower semicontinuity and coercivity of the energy. Note that \ref{p:c}, once satisfied, holds with respect to any norm. Hence the choice of the $1$-norm in \ref{p:c} is made for convenience only. Property \ref{p:m} allows to compare different energies, and thus it is quite natural to impose when comparing different minimization problems.

In \cref{sec:bobkov-parini} (specifically, \cref{res:limit}) we need a stronger version of \ref{p:lsc}, while throughout \cref{ssec:skeleton_CL,ssec:boundedness_CL} a stronger one of \ref{p:m}. 
Precisely, we strengthen them as follows:

\begin{enumerate}[label=(\textPhi.1\textsuperscript{+})]

\item
\label{p:+}
$\Phi$ is continuous;
\end{enumerate} 

\begin{enumerate}[label=(\textPhi.3\textsuperscript{+})]

\item
\label{p:ms}
$\Phi$ is \emph{strictly increasing}, i.e.,  
if $\nupla v,\nupla w\in\ort$ with $\nupla v<\nupla w$, then $\Phi(\nupla v)<\Phi(\nupla w)$. 
\end{enumerate} 

Note that \ref{p:lsc} (actually, the stronger \ref{p:+}) and \ref{p:c} are met by any norm on $\R^N$.
However, not all norms on $\R^N$ satisfy~\ref{p:m}.
A counterexample for $N=2$ is given by
\begin{equation*}
(\nupla v_1,\nupla v_2)\mapsto\sqrt{4(\nupla v_1-\nupla v_2)^2+\nupla v_2^2},
\quad
\nupla v=(\nupla v_1,\nupla v_2)\in\R^2.
\end{equation*}
The $p$-norm~\eqref{eq:p_norm} satisfies~\ref{p:m} and, as long as $p<\infty$, also~\ref{p:ms}. On the other hand, it can be easily checked that there exist reference functions~$\Phi$ satisfying \ref{p:+}, \ref{p:c}, and \ref{p:ms} which are not norms on $\R^N$, since $1$-homogeneity is not necessarily needed.

We stress that every statement in the present paper contains the bare minimum hypotheses on the reference function for it to hold. Nevertheless, assuming \ref{p:+}, \ref{p:c}, and \ref{p:ms}, all results of the present paper hold true.

\subsection{\texorpdfstring{$(\Phi,N)$}{(Phi,N)}-Cheeger constant}

We generalize \cref{def:h} to $N$-clusters as follows. Loose\-ly speaking, given an $N$-cluster, one considers the $N$-dimensional vector given by the ratios of perimeter over measure of its chambers, and then evaluate it via a given reference function $\Phi$. 
\cref{def:H} below was considered for the first time for $\Phi=\|\cdot\|_p$ as in~\eqref{eq:p_norm} with $p=1$ in~\cites{C17,CL19} and with $p=\infty$ in~\cite{BP18}.

\begin{definition}[$(\Phi,N)$-Cheeger constant]
\label{def:H}
The \emph{$(\Phi,N)$-Cheeger constant} of a set $F\subset\R^d$ is 
\begin{equation}
\label{eq:H}
H^{\Phi,N}(F) 
= 
\inf\set*{
\Phi\left({\frac{\per(\nupla E)}{|\nupla E|}}\right) : \nupla E\ \text{is an $N$-cluster of $F$}
}\in[0,\infty],
\end{equation}
where, for brevity, we have set
\begin{equation*}
{\frac{\per(\nupla E)}{|\nupla E|}}
=
\left(\frac{\per(\nupla E_1)}{|\nupla E_1|},\dots,\frac{\per(\nupla E_N)}{|\nupla E_N|}\right).
\end{equation*}
A \emph{$\Phi$-Cheeger $N$-cluster} of $F$ is any $N$-cluster $\nupla E$ of $F$ achieving the infimum in~\eqref{eq:H}. 
\end{definition}

\begin{remark}
The notation and the definitions introduced in the present section can be restated \textit{verbatim} in the abstract setting of~\cite{FPSS24}.
\end{remark}

Given any set $F\subset \R^d$ admitting an $N$-cluster, we have $H^{\Phi,N}(F)<\infty$. In particular, this holds true for a non-empty, bounded, open set $\Omega$ as we can consider $N$ disjoint balls contained in $\Omega$.

\section{Existence, properties and regularity of minimizers}
\label{sec:p=1}

In this section, we study existence and regularity properties of minimizers. First, though, it is useful to observe that the $(\Phi,N)$-Cheeger constant has an alternate spectral-geometric definition. Indeed, we recall that $h(F)$ can be thought of as the first Dirichlet eigenvalue of the $1$-Laplacian (refer to~\cite{FPSS24}*{Sect.~5}). We anticipate that, in \cref{ssec:1_functional_eigenvalue}, we will give a further equivalent spectral-functional definition of the $(\Phi,N)$-Cheeger constant, where the competitors are given by suitable $N$-tuples of $BV$ functions.

\subsection{First \texorpdfstring{$1$}{1}-geometric \texorpdfstring{$(\Phi, N)$}{(Phi, N)}-eigenvalue}

Below we introduce the definition of the \emph{first $1$-geometric $(\Phi,N)$-ei\-gen\-va\-lue} of a set $F\subset\R^d$, and we shall see that, up to assuming \ref{p:m}, this is a viable alternative to \cref{def:H}. 
We remark that the following has been used as definition of $(\Phi,N)$-Cheeger constant in~\cites{P10,BP18} for the case $\Phi=\|\cdot\|_\infty$. 

\begin{definition}[First $1$-geometric $(\Phi,N)$-eigenvalue]
\label{def:lstort_1}
The \emph{first $1$-geometric $(\Phi,N)$-ei\-gen\-va\-lue} of a set $F\subset\R^d$ is 
\begin{equation}
\label{eq:lstort_1}
\lstort^{\Phi,N}_{1,1}(F)
=
\inf\set*{
\Phi({h(\nupla E)}) : \nupla E\ \text{is an $N$-set of $F$}
}\in[0,\infty],
\end{equation}
where, for brevity, we have set
\begin{equation*}
{h(\nupla E)}
=
\big(h(\nupla E_1),\dots,h(\nupla E_N)\big).
\end{equation*} 
A \emph{$(1,\Phi)$-eigen-$N$-set} of $F$ is any $N$-set $\nupla E$ of $F$ achieving the infimum in~\eqref{eq:lstort_1}.
A \emph{$(1,\Phi)$-eigen-$N$-cluster} of $F$ is any $(1,\Phi)$-eigen-$N$-set $\nupla E$ of $F$ which is also an $N$-cluster.
\end{definition}

Just as we did for the $(\Phi, N)$-Cheeger constant, we note that, if $F\subset \R^d$ is a set admitting an $N$-cluster, then $\lstort^{\Phi,N}_{1,1}(F)<\infty$. In particular, this holds true for a non-empty, bounded, open set $\Omega$.

We let the reader note that~\eqref{eq:lstort_1} is apparently different from~\eqref{eqi:Lstort} for $p=1$ in the introduction.
However, the two problems do coincide, as shown later on in \cref{res:lstort_1_N-set}.

It is worth noticing that, without loss of generality, one can consider $N$-clusters only in the above \cref{def:lstort_1} (provided that $|F|<\infty$) thanks to the following simple result.

\begin{proposition}\label{res:clusters_only}
Given a set $F\subset\R^d$ with $|F|<\infty$, for any $N$-set $\nupla E$ of $F$ with $h(\nupla E)\in\ort$, there exists an $N$-cluster $\tilde{\nupla E}$ of $F$ such that $h(\tilde{\nupla E})=h(\nupla E)$ and $\tilde{\nupla E}_i\subset\nupla E_i$ for $i=1,\dots,N$.
Consequently,
\begin{equation*}
\lstort^{\Phi,N}_{1,1}(F)
=
\inf\set*{
\Phi({h(\nupla E)}) : \nupla E\ \text{is an $N$-cluster of $F$}
}\in[0,\infty],
\end{equation*}
and it is thus not restrictive to work with $(1,\Phi)$-eigen-$N$-clusters of $F$ only.
\end{proposition}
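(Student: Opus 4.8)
The plan is to reduce the whole statement to the following single-set assertion: \emph{if $G\subset F$ satisfies $|G|<\infty$ and $h(G)<\infty$, then there is $\tilde G\subset G$ with $\per(\tilde G)<\infty$, $|\tilde G|>0$ and $h(\tilde G)=h(G)$.} Granting this, observe that $h(\nupla E)\in\ort$ forces $h(\nupla E_i)<\infty$ for every $i$; applying the assertion to each chamber $\nupla E_i$ (which satisfies $|\nupla E_i|\le|F|<\infty$) produces $\tilde{\nupla E}_i\subset\nupla E_i$, and the $N$-tuple $\tilde{\nupla E}=(\tilde{\nupla E}_1,\dots,\tilde{\nupla E}_N)$ has chambers of positive measure and finite perimeter which are pairwise disjoint (being contained in the pairwise disjoint $\nupla E_i$); hence $\tilde{\nupla E}$ is an $N$-cluster of $F$ with $h(\tilde{\nupla E})=h(\nupla E)$, which proves the first claim. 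The displayed identity then follows: ``$\ge$'' holds since $N$-clusters are $N$-sets, and ``$\le$'' holds because any $N$-set $\nupla E$ with $\Phi(h(\nupla E))<\infty$ is matched by the $N$-cluster $\tilde{\nupla E}$ above, for which $\Phi(h(\tilde{\nupla E}))=\Phi(h(\nupla E))$.

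For the single-set assertion, fix a minimizing sequence $G_k\subset G$ with $|G_k|>0$, $\per(G_k)<\infty$ and $\per(G_k)/|G_k|\le h(G)+2^{-k}$ (possible since $h(G)<\infty$), and set $\tilde G:=\bigcup_{k\in\N}G_k$. The one delicate point is that $\per(\tilde G)<\infty$, and it hinges on the following stability estimate for unions: if $A,B\subset G$ have finite perimeter and positive measure with $\per(A)/|A|\le h(G)+\alpha$ and $\per(B)/|B|\le h(G)+\beta$, then submodularity of the perimeter together with $\per(A\cap B)\ge h(G)\,|A\cap B|$ (trivially true also when $|A\cap B|=0$) gives
\[
\per(A\cup B)\le\per(A)+\per(B)-\per(A\cap B)\le h(G)\,|A\cup B|+\alpha\,|A|+\beta\,|B|\le\bigl(h(G)+\alpha+\beta\bigr)|A\cup B|.
\]
Iterating this on the sets $F_n:=\bigcup_{k\le n}G_k$ gives $\per(F_n)/|F_n|\le h(G)+1$, hence $\per(F_n)\le\bigl(h(G)+1\bigr)|G|$ uniformly in $n$. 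Since $\chi_{F_n}\uparrow\chi_{\tilde G}\le\chi_G\in L^1$, dominated convergence yields $F_n\to\tilde G$ in $L^1$, and lower semicontinuity of the perimeter gives $\per(\tilde G)\le\liminf_n\per(F_n)\le\bigl(h(G)+1\bigr)|G|<\infty$.

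It remains to identify $h(\tilde G)$. Clearly $|\tilde G|\ge|G_1|>0$ and $\tilde G\subset G$, so by monotonicity of the Cheeger constant under inclusion $h(\tilde G)\ge h(G)$; on the other hand each $G_k\subset\tilde G$ is an admissible competitor for $h(\tilde G)$, whence $h(\tilde G)\le\per(G_k)/|G_k|\to h(G)$. Thus $h(\tilde G)=h(G)$, completing the proof. The only real obstacle is precisely the finiteness of $\per(\tilde G)$: the naive bound $\per(F_n)\le\sum_{k\le n}\per(G_k)$ need not be summable, and it is the stability estimate above — which turns the mere inclusion $F_n\subset G$ into an a priori ratio bound, combined with $|F_n|\le|F|<\infty$ — that makes the argument work; everything else reduces to lower semicontinuity of the perimeter and monotonicity of $h$.
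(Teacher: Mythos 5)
Your proof is correct, but it takes a genuinely different route from the paper's. The paper disposes of the single-chamber step in one line by invoking the existence of a Cheeger set $\tilde{\nupla E}_i\subset\nupla E_i$ of each chamber (an existence/compactness result for sets of finite measure quoted from~\cite{FPSS24}*{Sect.~3.1}), so that $h(\tilde{\nupla E}_i)=\per(\tilde{\nupla E}_i)/|\tilde{\nupla E}_i|=h(\nupla E_i)$. You avoid any existence theory: you take an almost-minimizing sequence $G_k$ for $h(G)$ and show that the partial unions $F_n=\bigcup_{k\le n}G_k$ retain the ratio bound $h(G)+1$, via submodularity of the perimeter combined with $\per(A\cap B)\ge h(G)\,|A\cap B|$; lower semicontinuity then yields $\per(\tilde G)<\infty$, and the competitors $G_k\subset\tilde G$ together with monotonicity of $h$ force $h(\tilde G)=h(G)$. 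The induction, the use of $|G|\le|F|<\infty$, and the assembly of the $N$-cluster are all sound, and the resulting $\tilde{\nupla E}$ fulfils exactly what the statement asks (its chambers need not be Cheeger sets, but the proposition does not require that). The paper's route buys brevity and chambers that are genuine Cheeger sets of the $\nupla E_i$, consistent with how such replacements are used elsewhere (e.g.\ in \cref{res:H=lstort}); your route buys self-containedness, since the existence of Cheeger sets in unbounded sets of finite measure is itself a nontrivial compactness statement. Two cosmetic remarks: in your last step the inequality labels are swapped --- the inclusion of $N$-clusters among $N$-sets gives ``$\le$'' for $\lstort^{\Phi,N}_{1,1}(F)$ versus the infimum over clusters, while your matching argument gives ``$\ge$'' --- and the natural admissibility condition there is $h(\nupla E)\in\ort$ (so that $\Phi(h(\nupla E))$ is defined), rather than $\Phi(h(\nupla E))<\infty$.
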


\begin{proof}
If $\nupla E$ is an $N$-set of $F$ with $h(\nupla E)\in\ort$, then, by \cref{def:h,def:cluster}  each $\nupla E_i$ has positive measure and a subset with positive measure and finite perimeter. Moreover, the inclusion $\nupla E_i\subset F$ implies that $|\nupla E_i|<\infty$, and thus $\nupla E_i$ admits a Cheeger set $\tilde{\nupla E}_i\subset\nupla E_i$, see~\cite{FPSS24}*{Sect.~3.1},  so that $h(\nupla E_i)=h(\tilde{\nupla E}_i)$ for $i=1,\dots,N$. Hence $\tilde{\nupla E}=(\tilde{\nupla E}_1,\dots,\tilde{\nupla E}_N)$ is an $N$-cluster of $F$ such that $h(\nupla E)=h(\tilde{\nupla E})$.
We thus get that $\Phi(h(\tilde{\nupla E}))=\lstort^{\Phi,N}_{1,1}(F)$ and the conclusion follows.
\end{proof}

The following result proves that, assuming \ref{p:m}, \cref{def:H,def:lstort_1} are in fact equivalent on a non-empty, bounded, and open set $\Omega$, generalizing~\cite{P10}*{Prop.~3.5}. Moreover, a first relation between minimizers of the two problems is established.

\begin{proposition}[$H^{\Phi,N}(\Omega)=\lstort^{\Phi,N}_{1,1}(\Omega)$]
\label{res:H=lstort}
The following holds
\begin{equation*}
\lstort^{\Phi,N}_{1,1}(\Omega)\ge H^{\Phi,N}(\Omega).
\end{equation*}
If \ref{p:m} is in force, then
\begin{equation*}
\lstort^{\Phi,N}_{1,1}(\Omega)=H^{\Phi,N}(\Omega).
\end{equation*}
Moreover, any $\Phi$-Cheeger $N$-cluster of $\Omega$ is also a $(1,\Phi)$-eigen-$N$-cluster of $\Omega$.
\end{proposition}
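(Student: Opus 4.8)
The plan is to prove the two inequalities separately and then read off the statement on minimizers from the resulting chain of (in)equalities. The inequality $\lstort^{\Phi,N}_{1,1}(\Omega)\ge H^{\Phi,N}(\Omega)$ should require nothing beyond \cref{res:clusters_only}, whereas \ref{p:m} is exactly what makes the reverse inequality work.

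For $\lstort^{\Phi,N}_{1,1}(\Omega)\ge H^{\Phi,N}(\Omega)$, I would fix an arbitrary $N$-set $\nupla E$ of $\Omega$ admissible in the infimum defining \eqref{eq:lstort_1}, that is, with $h(\nupla E)\in\ort$ (since $\Phi$ is real-valued, these are the only relevant ones). As $\Omega$ is bounded, $|\Omega|<\infty$, so the proof of \cref{res:clusters_only} yields an $N$-cluster $\tilde{\nupla E}$ of $\Omega$ whose $i$-th chamber is a Cheeger set of $\nupla E_i$; in particular $\per(\tilde{\nupla E}_i)/|\tilde{\nupla E}_i|=h(\nupla E_i)$ for each $i$, hence $\Phi(\per(\tilde{\nupla E})/|\tilde{\nupla E}|)=\Phi(h(\nupla E))$. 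Since $\tilde{\nupla E}$ is admissible in \eqref{eq:H}, this gives $H^{\Phi,N}(\Omega)\le\Phi(h(\nupla E))$, and taking the infimum over $\nupla E$ concludes.

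For the reverse inequality under \ref{p:m}, I would take any $N$-cluster $\nupla E$ of $\Omega$, note it is in particular an $N$-set, and use the chamber $\nupla E_i$ itself as a competitor in the infimum defining $h(\nupla E_i)$ to get $0\le h(\nupla E_i)\le\per(\nupla E_i)/|\nupla E_i|$ for every $i$; thus $h(\nupla E)\in\ort$ and $h(\nupla E)\le\per(\nupla E)/|\nupla E|$ componentwise, so \ref{p:m} yields $\Phi(h(\nupla E))\le\Phi(\per(\nupla E)/|\nupla E|)$ and therefore $\lstort^{\Phi,N}_{1,1}(\Omega)\le\Phi(\per(\nupla E)/|\nupla E|)$. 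Passing to the infimum over $N$-clusters gives $\lstort^{\Phi,N}_{1,1}(\Omega)\le H^{\Phi,N}(\Omega)$, hence equality. Finally, if $\nupla E$ is a $\Phi$-Cheeger $N$-cluster, the same inequalities read
\[
\lstort^{\Phi,N}_{1,1}(\Omega)\le\Phi(h(\nupla E))\le\Phi\left(\frac{\per(\nupla E)}{|\nupla E|}\right)=H^{\Phi,N}(\Omega)=\lstort^{\Phi,N}_{1,1}(\Omega),
\]
so all of them are equalities; as $\nupla E$ is already an $N$-cluster, it is a $(1,\Phi)$-eigen-$N$-cluster. I do not expect a genuine obstacle here: the only point to watch is keeping track of $N$-sets versus $N$-clusters and of possibly infinite Cheeger constants, but this takes care of itself, since $\Phi$ is real-valued on the $\lstort$ side and every $N$-cluster automatically satisfies the componentwise bound on the $H$ side. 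The whole content sits in \cref{res:clusters_only} and in the monotonicity hypothesis \ref{p:m}.
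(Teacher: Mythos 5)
Your proposal is correct and follows essentially the same route as the paper: the inequality $\lstort^{\Phi,N}_{1,1}(\Omega)\ge H^{\Phi,N}(\Omega)$ via replacing each chamber by a Cheeger set of itself (the content of \cref{res:clusters_only}, which the paper applies directly to clusters), the reverse inequality from $h(\nupla E_i)\le\per(\nupla E_i)/|\nupla E_i|$ together with \ref{p:m}, and the minimizer claim by reading equality across the resulting chain. The only difference is that you explicitly route the first inequality through \cref{res:clusters_only} to cover general $N$-sets, a detail the paper leaves implicit.
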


\begin{proof}
Given any $N$-cluster $\nupla E$ of $\Omega$, each $\nupla E_i$ has positive measure and finite perimeter by \cref{def:cluster}, and finite measure since $\nupla E_i \subset \Omega$. 
Hence $\nupla E_i$ admits a Cheeger set $\tilde{\nupla E}_i$, see~\cite{FPSS24}*{Sect.~3.1},  i.e., 
\begin{equation}
\label{eq:toast}
h(\nupla E_i)
=
\frac{\per(\tilde{\nupla E}_i)}{|\tilde{\nupla E}_i|},
\quad
\text{for each}\
i=1,\dots,N.
\end{equation}
Note that $\tilde{\nupla E}=(\tilde{\nupla E}_1,\dots,\tilde{\nupla E}_N)$ is an $N$-cluster of $\Omega$ such that
\begin{equation*}
{h(\nupla E)}
=
{\frac{\per(\tilde{\nupla E})}{|\tilde{\nupla E}|}}
\end{equation*}
by~\eqref{eq:toast}, hence proving that $H^{\Phi,N}(\Omega)\le\lstort^{\Phi,N}_{1,1}(\Omega)$.

Viceversa, we clearly have ${h(\nupla E)}\le\per(\nupla E)/|\nupla E|$ for any $N$-cluster $\E$ of $\Omega$.
By~\ref{p:m}, we hence get that 
\begin{equation}\label{eq:Phi(h)<Phi(PV)}
\Phi({h(\nupla E)}) 
\le 
\Phi\left({\frac{\per(\nupla E)}{|\nupla E|}}\right)
\end{equation}
for any $N$-cluster $\E$ of $\Omega$, yielding $\lstort^{\Phi,N}_{1,1}(\Omega)\le H^{\Phi,N}(\Omega)$. 

Finally, if $\nupla E$ is a $\Phi$-Cheeger $N$-cluster of $\Omega$, then~\eqref{eq:Phi(h)<Phi(PV)} yields that
\begin{equation*}
\lstort^{\Phi,N}_{1,1}(\Omega) 
\le 
\Phi({h(\nupla E)}) 
\le 
\Phi\left({\frac{\per(\nupla E)}{|\nupla E|}}\right) 
= 
H^{\Phi,N}(\Omega)
=
\lstort^{\Phi,N}_{1,1}(\Omega) ,
\end{equation*}
and thus $\E$ must also be a $(1,\Phi)$-eigen-$N$-cluster of $\Omega$, concluding the proof.
\end{proof}

The second part of \cref{res:H=lstort} cannot be reversed, that is, $(1,\Phi)$-eigen-$N$-clusters of $\Omega$ may not be $\Phi$-Cheeger $N$-clusters of $\Omega$, see also~\cite{BP18}.
However, this holds in the case of $1$-adjusted $N$-clusters (recall \cref{def:1-adj}). 
Precisely, we have the following result. 

\begin{proposition}
\label{res:1_adj_min_IFF}
Let \ref{p:m} be in force.
Then, any $1$-adjusted $(1,\Phi)$-eigen-$N$-cluster of $\Omega$ is also a $\Phi$-Cheeger $N$-cluster of $\Omega$.
\end{proposition}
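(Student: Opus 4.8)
The plan is to take a $1$-adjusted $(1,\Phi)$-eigen-$N$-cluster $\nupla E$ of $\Omega$ and show directly that it achieves $H^{\Phi,N}(\Omega)$, i.e.\ that
\[
\Phi\!\left(\frac{\per(\nupla E)}{|\nupla E|}\right) = H^{\Phi,N}(\Omega).
\]
The key observation is \cref{rem:1-adj_self}: because $\nupla E$ is $1$-adjusted, each chamber satisfies $h(\nupla E_i)=\per(\nupla E_i)/|\nupla E_i|$, so that $h(\nupla E)=\per(\nupla E)/|\nupla E|$ as vectors in $\ort$. Hence
\[
\Phi\!\left(\frac{\per(\nupla E)}{|\nupla E|}\right)
= \Phi\big(h(\nupla E)\big)
= \lstort^{\Phi,N}_{1,1}(\Omega),
\]
the last equality because $\nupla E$ is by assumption a $(1,\Phi)$-eigen-$N$-cluster.

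Now I would invoke \cref{res:H=lstort}. Under \ref{p:m} it gives $\lstort^{\Phi,N}_{1,1}(\Omega)=H^{\Phi,N}(\Omega)$, so combining the two displays yields
\[
\Phi\!\left(\frac{\per(\nupla E)}{|\nupla E|}\right) = H^{\Phi,N}(\Omega),
\]
which is exactly the statement that $\nupla E$ is a $\Phi$-Cheeger $N$-cluster of $\Omega$. (One should note in passing that $\nupla E$ is indeed an admissible competitor for $H^{\Phi,N}(\Omega)$: being a $(1,\Phi)$-eigen-$N$-cluster it is in particular an $N$-cluster of $\Omega$, so all its chambers have positive measure and finite perimeter.)

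There is essentially no obstacle here — the proposition is a short corollary of \cref{rem:1-adj_self} and \cref{res:H=lstort}. The only point deserving a word is why $h(\nupla E)\in\ort$, i.e.\ why each $h(\nupla E_i)$ is finite: this is immediate since each $\nupla E_i$ is a chamber of an $N$-cluster, hence has finite perimeter and positive (finite) measure, giving $h(\nupla E_i)\le\per(\nupla E_i)/|\nupla E_i|<\infty$. With that in hand the chain of equalities above closes the argument.
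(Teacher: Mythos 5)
Your proof is correct and follows essentially the same route as the paper: apply \eqref{eq:1-adj_self} to rewrite $\Phi(\per(\nupla E)/|\nupla E|)=\Phi(h(\nupla E))=\lstort^{\Phi,N}_{1,1}(\Omega)$, then conclude via the equality $\lstort^{\Phi,N}_{1,1}(\Omega)=H^{\Phi,N}(\Omega)$ from \cref{res:H=lstort} under \ref{p:m}. The extra remarks on admissibility and finiteness of $h(\nupla E_i)$ are harmless and correct.
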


\begin{proof}
If $\E$ is a $1$-adjusted $(1,\Phi)$-eigen-$N$-cluster of $\Omega$, then, by~\eqref{eq:1-adj_self}, 
\begin{equation*}
\Phi\left({\frac{\per(\nupla E)}{|\nupla E|}}\right)
=
\Phi({h(\nupla E)})
=
\lstort^{\Phi,N}_{1,1}(\Omega).
\end{equation*}
Hence the conclusion immediately follows from \cref{res:H=lstort}.
\end{proof}

It is worth noting that, whenever the strict monotonicity property~\ref{p:ms} holds, any minimizer of~\eqref{eq:H} is $1$-adjusted. 

\begin{proposition}
\label{res:4_1-adj}
Let \ref{p:ms} be in force.
Then, any $\Phi$-Cheeger $N$-cluster of $\Omega$ is $1$-adjusted.
\end{proposition}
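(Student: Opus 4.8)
The plan is to argue by contradiction, using the strict monotonicity \ref{p:ms} to rule out any chamber whose isoperimetric ratio strictly exceeds the Cheeger constant of the region left available to it. Let $\nupla E$ be a $\Phi$-Cheeger $N$-cluster of $\Omega$. First I would record the universal inequality underlying the $1$-adjusted condition: for each $i=1,\dots,N$, the inclusion $\nupla E_i\subset\Omega\setminus\Omega_i^{\nupla E}$ makes $\nupla E_i$ an admissible competitor in the infimum defining $h(\Omega\setminus\Omega_i^{\nupla E})$, whence
\begin{equation*}
h(\Omega\setminus\Omega_i^{\nupla E})\le\frac{\per(\nupla E_i)}{|\nupla E_i|}
\qquad\text{for every }i=1,\dots,N.
\end{equation*}
If $\nupla E$ fails to be $1$-adjusted, this inequality must be strict for at least one index, say $i_0$.

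Next I would build a strictly better competitor by swapping out the $i_0$-th chamber. The set $\Omega\setminus\Omega_{i_0}^{\nupla E}$ has finite measure (being contained in $\Omega$) and admits the viable competitor $\nupla E_{i_0}$, hence it possesses a Cheeger set $C$ (see~\cite{FPSS24}*{Sect.~3.1}); by definition $|C|>0$, $\per(C)<\infty$, and
\begin{equation*}
\frac{\per(C)}{|C|}=h(\Omega\setminus\Omega_{i_0}^{\nupla E})<\frac{\per(\nupla E_{i_0})}{|\nupla E_{i_0}|}.
\end{equation*}
Since $C\subset\Omega\setminus\Omega_{i_0}^{\nupla E}$ is disjoint (in the measure-theoretic sense) from every $\nupla E_j$ with $j\neq i_0$, the $N$-tuple $\tilde{\nupla E}=(\nupla E_1,\dots,\nupla E_{i_0-1},C,\nupla E_{i_0+1},\dots,\nupla E_N)$ is again an $N$-cluster of $\Omega$.

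Finally, the vector $\per(\tilde{\nupla E})/|\tilde{\nupla E}|$ coincides with $\per(\nupla E)/|\nupla E|$ in every component except the $i_0$-th, where it is strictly smaller; hence $\per(\tilde{\nupla E})/|\tilde{\nupla E}|<\per(\nupla E)/|\nupla E|$ in the component-wise partial order of $\ort$. Applying \ref{p:ms} yields
\begin{equation*}
\Phi\!\left(\frac{\per(\tilde{\nupla E})}{|\tilde{\nupla E}|}\right)<\Phi\!\left(\frac{\per(\nupla E)}{|\nupla E|}\right)=H^{\Phi,N}(\Omega),
\end{equation*}
which contradicts $H^{\Phi,N}(\Omega)$ being the infimum in~\eqref{eq:H}. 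Thus no such $i_0$ exists and $\nupla E$ is $1$-adjusted.

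I do not anticipate a genuine difficulty: the proof is the chamber-swapping trick, and the only points needing a line of care are that $\Omega\setminus\Omega_{i_0}^{\nupla E}$ really admits a Cheeger set (finite measure together with the explicit viable competitor $\nupla E_{i_0}$) and that swapping a single chamber preserves the $N$-cluster structure (pairwise disjointness is automatic from $C\subset\Omega\setminus\Omega_{i_0}^{\nupla E}$, while positivity of measure and finiteness of perimeter come for free from $C$ being a Cheeger set). It is worth stressing that one cannot shortcut the argument through \cref{res:H=lstort}: that result only gives $h(\nupla E_i)=\per(\nupla E_i)/|\nupla E_i|$, i.e.\ that each chamber is its own Cheeger set, which is strictly weaker than the $1$-adjusted identity $h(\Omega\setminus\Omega_i^{\nupla E})=\per(\nupla E_i)/|\nupla E_i|$; it is precisely the swap that closes this gap.
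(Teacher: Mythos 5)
Your proof is correct and follows essentially the same route as the paper: argue by contradiction, replace the offending chamber by a Cheeger set of $\Omega\setminus\Omega_{i_0}^{\nupla E}$ (which exists by~\cite{FPSS24}*{Sect.~3.1}), and use \ref{p:ms} to contradict the minimality of the original cluster. The additional care you take (checking the swapped tuple is an $N$-cluster, and noting why \cref{res:H=lstort} alone does not suffice) matches the paper's reasoning, so nothing further is needed.
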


\begin{proof}
By contradiction, if $\nupla E$ is a $\Phi$-Cheeger $N$-cluster of $\Omega$ which is not $1$-adjusted, then
\begin{equation*}
\frac{\per(\nupla E_i)}{|\nupla E_i|} > h(\Omega\setminus\Omega^{\nupla E}_i)
\quad
\text{for some}\
i\in\set*{1,\dots,N},
\end{equation*}
where $\Omega^{\nupla E}_i$ is defined as in~\eqref{eq:def_M_i}. Since $|\Omega\setminus \Omega^{\nupla E}_i|<\infty$ and $\nupla E_i\subset\Omega\setminus\Omega^{\nupla E}_i$, by standard results (e.g., see~\cite{FPSS24}*{Sect.~3.1}) the set $\Omega\setminus\Omega^{\nupla E}_i$ admits a Cheeger set $\tilde{\nupla E}_i$, i.e., a set such that
\begin{equation*}
h(\Omega\setminus\Omega^{\nupla E}_i) 
= 
\frac{\per(\tilde{\nupla E}_i)}{|\tilde{\nupla E}_i|}.
\end{equation*}
Assuming $i=1$ without loss of generality, the $N$-cluster 
$\tilde{\nupla E} = 
(\tilde{\nupla E}_1, \nupla E_2,\dots,\nupla E_N)$
satisfies ${\per(\tilde{\nupla E})/|\tilde{\nupla E}|}<{\per(\nupla E)/|\nupla E|}$, therefore property~\ref{p:ms} implies the validity of the strict inequality $\Phi({\per(\tilde{\nupla E})/|\tilde{\nupla E}|})<\Phi({\per(\nupla E)/|\nupla E|})$, against the minimality of~$\E$.
\end{proof}

Summing up these results, and, in view of \cref{res:clusters_only}, restricting the class of competitors for $\lstort^{\Phi, N}_{1,1}(\Omega)$ to $N$-clusters only, for a non-empty, bounded, and open set $\Omega$ we have the following chain of inclusions
\begin{align*}
\set*{\E\in\argmin \lstort^{\Phi,N}_{1,1}(\Omega)} 
&\supseteq 
\set*{\E\in\argmin H^{\Phi,N}(\Omega)}
\\
&\supseteq 
\set*{\E\in\argmin H^{\Phi,N}(\Omega): \E\ \text{is $1$-adjusted}}
\\
&= 
\set*{\E\in\argmin \lstort^{\Phi,N}_{1,1}(\Omega): \E\ \text{is $1$-adjusted}},
\end{align*}
and if \ref{p:ms} holds, the last set inclusion becomes a set equality.

\begin{remark}
To rephrase the results of the present subsection in the abstract setting of~\cite{FPSS24}, we just need to invoke~\cite{FPSS24}*{Th.~3.6}, and hence we need to enforce that the perimeter-measure pair meets properties (P.4), (P.5), and (P.6) of~\cite{FPSS24}*{Sect.~2.1}.
\end{remark}

Finally, assuming~\ref{p:c}, we can prove the following lower bound on $H^{\Phi,N}(\Omega)$, generalizing~\cite{P10}*{Prop.~3.14}.

\begin{proposition}
\label{res:lower_bound_H}
Let \ref{p:c} be in force. Then,
\begin{equation}
\label{eq:lower_bound_H}
H^{\Phi,N}(\Omega)
\ge
N \delta d\left(\frac{|B_1|}{|\Omega|}\right)^{\frac1d}, 
\end{equation}
holds, where $\delta$ is as in~\ref{p:c}.
\end{proposition}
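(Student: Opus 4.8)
The plan is to estimate the energy of an arbitrary $N$-cluster from below in two moves: first strip off the coercivity of $\Phi$, then control each isoperimetric ratio via the Euclidean isoperimetric inequality applied chamber by chamber.

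Concretely, I would fix an arbitrary $N$-cluster $\nupla E$ of $\Omega$ and argue in three steps. First, invoke \ref{p:c} to obtain
\[
\Phi\!\left(\frac{\per(\nupla E)}{|\nupla E|}\right)
\ge
\delta\,\left\|\frac{\per(\nupla E)}{|\nupla E|}\right\|_1
=
\delta\sum_{i=1}^N\frac{\per(\nupla E_i)}{|\nupla E_i|}.
\]
Second, since each chamber $\nupla E_i$ has finite perimeter and finite positive measure, apply the isoperimetric inequality $\per(\nupla E_i)\ge d\,|B_1|^{\frac1d}\,|\nupla E_i|^{\frac{d-1}{d}}$ to bound each summand from below by $d\,|B_1|^{\frac1d}\,|\nupla E_i|^{-\frac1d}$. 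Third, use $\nupla E_i\subset\Omega$, hence $|\nupla E_i|\le|\Omega|<\infty$, to replace $|\nupla E_i|^{-\frac1d}$ by $|\Omega|^{-\frac1d}$; summing the $N$ resulting identical bounds and passing to the infimum over all $N$-clusters of $\Omega$ yields exactly \eqref{eq:lower_bound_H}.

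I do not foresee any genuine obstacle: all three steps are elementary, and the argument uses neither the finiteness of $\per(\Omega)$ nor the pairwise disjointness of the chambers. The only point worth a line of justification is the legitimacy of the isoperimetric inequality for the (possibly irregular) chamber $\nupla E_i$, which is standard for sets of finite perimeter (see~\cite{Mag12book}). If a sharper constant were desired, convexity of $t\mapsto t^{-1/d}$ together with $\sum_i|\nupla E_i|\le|\Omega|$ would upgrade the factor $N$ to $N^{(d+1)/d}$, but this refinement is not needed for the stated bound.
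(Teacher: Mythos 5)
Your proof is correct and follows essentially the same route as the paper: coercivity \ref{p:c} to pass to the sum of isoperimetric ratios, the isoperimetric inequality chamber by chamber, and the inclusion $\nupla E_i\subset\Omega$. The only cosmetic difference is that you bound every competitor directly and then infimize, whereas the paper phrases the same estimate via an $\eps$-infimizing cluster and lets $\eps\to0^+$; your closing remark on the Jensen-type improvement to $N^{(d+1)/d}$ (which, unlike the basic bound, does use disjointness of the chambers) is a correct aside.
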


\begin{proof}
For any $\eps>0$, we let $\nupla E^\eps$ be an $N$-cluster of $\Omega$ such that
\[
H^{\Phi, N}(\Omega) +\eps \ge \Phi\left(\frac{\per(\nupla E^\eps)}{|\nupla E^\eps|} \right).
\]
By~\ref{p:c}, the isoperimetric inequality (i.e., $\per(A)\ge d |B_1|^{\frac{1}{d}}|A|^{\frac{d-1}{d}}$ for all Borel sets $A\subset\R^d$) on each chamber $\E^\eps_i$, and the set inclusion $\E^\eps_i \subset \Omega$, we have
\begin{equation*}
H^{\Phi, N}(\Omega) +\eps 
\ge 
\delta \sum_{i=1}^N \frac{\per(\nupla E^\eps_i)}{|\nupla E^\eps_i|}
\ge
\delta d \sum_{i=1}^N \left(\frac{|B_1|}{|\E^\eps_i|}\right)^{\frac 1d}
\ge
N\delta d \left(\frac{|B_1|}{|\Omega|}\right)^{\frac 1d},
\end{equation*}
and the conclusion follows by letting $\eps\to 0^+$.
\end{proof}

\begin{remark} 
It is worth noticing that \cref{res:lower_bound_H} yields that $H^{\Phi,N}(\Omega)\to\infty$ as $N\to\infty$, generalizing~\cite{P10}*{Cor.~3.15}.     
\end{remark}

\subsection{Existence of minimizers}
\label{ssec:ex_cheeger_clus}

We now prove that $1$-adjusted minimizers of~\eqref{eq:H} exist among $N$-clusters of a non-empty, bounded, and open set $\Omega$, assuming~\ref{p:lsc}--\ref{p:m}. 
In virtue of \cref{res:H=lstort}, this also implies the existence of minimizers of~\eqref{eq:lstort_1}, generalizing the corresponding results in~\cites{P10,BP18,C17,CL19}. Note that \ref{p:c} here plays a crucial role, as it yields a uniform upper bound on the perimeters of an infimizing sequence.

\begin{theorem}[Existence of minimizers of $H^{\Phi,N}(\Omega)$]
\label{res:existence}
Let \ref{p:lsc}, \ref{p:c}, and \ref{p:m} be in force.
Then, 
$\Phi$-Cheeger $N$-clusters of $\Omega$ exist.
\end{theorem}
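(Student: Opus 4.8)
The plan is to run the direct method of the calculus of variations on the functional $\nupla E \mapsto \Phi\big(\tfrac{\per(\nupla E)}{|\nupla E|}\big)$ over $N$-clusters of $\Omega$. Take an infimizing sequence $(\nupla E^k)_k$ of $N$-clusters of $\Omega$, so that $\Phi\big(\tfrac{\per(\nupla E^k)}{|\nupla E^k|}\big) \to H^{\Phi,N}(\Omega) < \infty$. The first key step is to extract uniform bounds: by coercivity~\ref{p:c}, for $k$ large,
\[
H^{\Phi,N}(\Omega) + 1 \ge \delta \sum_{i=1}^N \frac{\per(\nupla E^k_i)}{|\nupla E^k_i|},
\]
and since each $\nupla E^k_i \subset \Omega$ gives $|\nupla E^k_i| \le |\Omega|$, we get $\per(\nupla E^k_i) \le C$ for a constant $C$ independent of $k$ and $i$. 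This is exactly where~\ref{p:c} is indispensable, as the author's remark stresses.

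The second step is compactness. Each $\nupla E^k_i$ is a subset of the bounded set $\Omega$ with uniformly bounded perimeter, so by the standard $BV$-compactness theorem (applied to the characteristic functions $\chi_{\nupla E^k_i}$, extended by zero outside $\Omega$) we may pass to a subsequence so that $\chi_{\nupla E^k_i} \to \chi_{\nupla E_i}$ in $L^1_\loc$ (indeed in $L^1$, since everything lives in the bounded set) for each $i=1,\dots,N$, with $\per(\nupla E_i) \le \liminf_k \per(\nupla E^k_i)$ by lower semicontinuity of the perimeter. The $L^1$-convergence preserves disjointness: $|\nupla E_i \cap \nupla E_j| = \lim_k |\nupla E^k_i \cap \nupla E^k_j| = 0$ for $i \ne j$, and $\nupla E_i \subset \Omega$ in the measure-theoretic sense. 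The delicate point — the main obstacle — is to rule out that some chamber collapses, i.e.\ that $|\nupla E_i| = 0$. The plan here is the usual argument: $|\nupla E^k_i|$ is uniformly bounded below away from zero, because $\per(\nupla E^k_i) \le C$ together with the isoperimetric inequality $\per(A) \ge d|B_1|^{1/d}|A|^{(d-1)/d}$ and the fact that $\per(\nupla E^k_i)/|\nupla E^k_i| = \lambda_i \ge 0$ finite forces $|\nupla E^k_i| \ge (d|B_1|^{1/d}/\lambda_i)^{d}$; more simply, from $\per(\nupla E^k_i) \le C$ one does not directly get a lower volume bound, so instead one uses that the ratio $\per/|\cdot|$ is bounded above (again by~\ref{p:c}), say $\per(\nupla E^k_i)/|\nupla E^k_i| \le C/\delta =: M$, whence $|\nupla E^k_i| \ge (d|B_1|^{1/d}/M)^d > 0$ uniformly. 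Passing to the limit, $|\nupla E_i| \ge (d|B_1|^{1/d}/M)^d > 0$, so $\nupla E = (\nupla E_1,\dots,\nupla E_N)$ is a genuine $N$-cluster of $\Omega$.

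The third step is to conclude that $\nupla E$ is a minimizer. From $|\nupla E^k_i| \to |\nupla E_i| \in (0,\infty)$ and $\per(\nupla E_i) \le \liminf_k \per(\nupla E^k_i)$ we obtain, along a further subsequence realizing the liminf componentwise,
\[
\frac{\per(\nupla E_i)}{|\nupla E_i|} \le \liminf_{k\to\infty} \frac{\per(\nupla E^k_i)}{|\nupla E^k_i|}
\quad\text{for each } i=1,\dots,N.
\]
Now invoke~\ref{p:m} (monotonicity of $\Phi$) followed by~\ref{p:lsc} (lower semicontinuity of $\Phi$): there is some $\nupla w \in \ort$ with $\tfrac{\per(\nupla E)}{|\nupla E|} \le \nupla w$ and $\nupla w = \lim_k \tfrac{\per(\nupla E^{k_j})}{|\nupla E^{k_j}|}$ along a subsequence along which each component converges, so
\[
\Phi\Big(\tfrac{\per(\nupla E)}{|\nupla E|}\Big) \le \Phi(\nupla w) \le \liminf_{j\to\infty} \Phi\Big(\tfrac{\per(\nupla E^{k_j})}{|\nupla E^{k_j}|}\Big) = H^{\Phi,N}(\Omega).
\]
Hence $\nupla E$ is a $\Phi$-Cheeger $N$-cluster of $\Omega$. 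Finally, to get a \emph{$1$-adjusted} minimizer: if this $\nupla E$ is already $1$-adjusted we are done; otherwise, for each $i$ for which $\per(\nupla E_i)/|\nupla E_i| > h(\Omega \setminus \Omega^{\nupla E}_i)$, replace $\nupla E_i$ by a Cheeger set $\tilde{\nupla E}_i \subset \Omega \setminus \Omega^{\nupla E}_i$ of $\Omega \setminus \Omega^{\nupla E}_i$ (which exists since this set has finite measure), exactly as in the proof of \cref{res:4_1-adj}. This does not increase the value of $\Phi$ by~\ref{p:m}, so the new cluster is still a minimizer; one checks, as in~\cite{BP18}, that after finitely many such replacements (or by choosing a minimizer of $H^{\Phi,N}(\Omega)$ that in addition minimizes, say, the sum $\sum_i |\nupla E_i|$ among minimizers) one reaches a $1$-adjusted minimizer. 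I expect the volume non-degeneracy in Step 2 to be the only genuinely delicate point; the rest is a routine application of $BV$-compactness together with properties~\ref{p:lsc}--\ref{p:m} of $\Phi$.
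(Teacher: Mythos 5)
Your proof is correct and follows essentially the same route as the paper: direct method, coercivity \ref{p:c} to bound perimeters, $BV$ compactness, the uniform ratio bound plus the isoperimetric inequality to exclude vanishing chambers, and then lower semicontinuity of the perimeter combined with \ref{p:m} and \ref{p:lsc} to conclude. The final paragraph on producing a $1$-adjusted minimizer is not needed for this statement (the paper handles it separately in \cref{res:props_cheeger}\ref{item:prop_cheeger_modified_1-adj}), but it does no harm.
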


\begin{proof}
Let $\set*{\nupla E^k:k\in\N}$ be an infimizing sequence for $H^{\Phi,N}(\Omega)$ and let $\eps>0$.
By \ref{p:c}, for all $k\in\N$ sufficiently large we have that
\begin{equation*}
\sum_i \per(\nupla E^k_i)
\le 
\frac{|\Omega|}{\delta} \,\Phi\left(\frac{\per(\nupla E^k)}{|\nupla E^k|}\right)
\le 
\frac{|\Omega|}{\delta}\,(H^{\Phi,N}(\Omega) + \varepsilon),
\end{equation*}
where $\delta>0$ is as in \ref{p:c}.
Consequently, up to subsequences, $\nupla E^k_i\to\nupla E_i$ as $k\to\infty$ in $L^1(\Omega)$ for each $i=1,\dots,N$, for some $\nupla E_i\subset\Omega$.
By the lower semicontinuity of the perimeter, we have $\per(\nupla E_i)<\infty$, while it is also easy to see that $|\nupla E_i\cap\nupla E_j|=0$ for $i\ne j$ with $i,j\in\set*{1,\dots,N}$.
To conclude that $\nupla E$ is an $N$-cluster of $\Omega$, we need to check that $|\nupla E_i|>0$ for all $i=1,\dots,N$. 
If $|\nupla E_j|=0$ for some~$j$, then, thanks to~\ref{p:c} and the isoperimetric inequality, we can estimate
\begin{equation}
\label{eq:goccia}
\frac{H^{\Phi,N}(\Omega) + \eps}{\delta}
\ge 
\frac 1\delta \Phi\left(\frac{\per(\nupla E^k)}{|\nupla E^k|}\right)
\ge
\sum_{i=1}^N 
\frac{\per(\nupla E^k_i)}{|\nupla E^k_i|} 
\ge 
\frac{\per(\nupla E^k_j)}{|\nupla E^k_j|} 
\ge 
\frac{\per(B^k_j)}{|B^k_j|} = 
\frac{d}{r_k},
\end{equation}
where $B^k_j\subset\R^d$ is any ball of radius $r_k>0$ such that $|B^k_j|=|\nupla E^k_j|$. 
Since $|\nupla E_j|=0$,  $|\nupla E^k_j|\to0^+$ as $k\to\infty$, and thus also $r_k\to0^+$ as $k\to\infty$, contradicting~\eqref{eq:goccia}. 
Therefore, $\E$ is an $N$-cluster such that, by the lower semicontinuity of the perimeter, 
\begin{equation*}
{\frac{\per(\nupla E)}{|\nupla E|}}
\le 
\liminf_{k\to\infty} {\frac{\per(\nupla E^k)}{|\nupla E^k|}}.
\end{equation*}
Now, owing to~\ref{p:m} and to~\ref{p:lsc}, we get that
\begin{equation*}
\Phi\left({\frac{\per(\nupla E)}{|\nupla E|}}\right)
\le 
\Phi\left(
\liminf_{k\to\infty}{\frac{\per(\nupla E^k)}{|\nupla E^k|}}\right) \le 
\liminf_{k\to\infty} \Phi\left({\frac{\per(\nupla E^k)}{|\nupla E^k|}}\right) 
= 
H^{\Phi,N}(\Omega),
\end{equation*}
yielding that $\E$ is a $\Phi$-Cheeger-$N$-cluster of $\Omega$ and concluding the proof.
\end{proof}

From \cref{res:H=lstort,res:existence} we  immediately get the following result.

\begin{corollary}[Existence of minimizers of $\lstort^{\Phi,N}_{1,1}(\Omega)$] 
\label{cor:existence_geom_1}
Let \ref{p:lsc}, \ref{p:c}, and \ref{p:m} be in force. 
Then, $(1,\Phi)$-eigen-$N$-clusters of $\Omega$ exist.
\end{corollary}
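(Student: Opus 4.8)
The plan is to obtain the statement as a direct consequence of the existence of $\Phi$-Cheeger $N$-clusters together with the inclusion of minimizers recorded in \cref{res:H=lstort}. First, I would observe that the hypotheses at hand, namely \ref{p:lsc}, \ref{p:c}, and \ref{p:m}, are exactly those of \cref{res:existence}; hence that theorem applies and furnishes a $\Phi$-Cheeger $N$-cluster $\nupla E$ of $\Omega$, i.e.\ an $N$-cluster of $\Omega$ realizing the infimum defining $H^{\Phi,N}(\Omega)$.

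Next, since \ref{p:m} is in force, \cref{res:H=lstort} gives both the equality $\lstort^{\Phi,N}_{1,1}(\Omega)=H^{\Phi,N}(\Omega)$ and, more importantly, the fact that every $\Phi$-Cheeger $N$-cluster of $\Omega$ is also a $(1,\Phi)$-eigen-$N$-cluster of $\Omega$. Applying this to the cluster $\nupla E$ produced in the first step, I conclude that $\nupla E$ is a $(1,\Phi)$-eigen-$N$-cluster of $\Omega$, which proves the claim.

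There is essentially no obstacle here: the entire content has already been established, the only point worth checking being that the three assumptions required to run \cref{res:existence} in particular include \ref{p:m}, so that \cref{res:H=lstort} is indeed available in this setting; this makes the two cited results compatible and the argument a two-line deduction. (Alternatively, one could argue directly: an infimizing sequence for $\lstort^{\Phi,N}_{1,1}(\Omega)$ may by \cref{res:clusters_only} be taken among $N$-clusters, and then $\Phi(h(\nupla E^k))$ bounds $\sum_i h(\nupla E^k_i)$ from above via \ref{p:c}, giving a uniform perimeter bound on suitable Cheeger subsets of the chambers and allowing a compactness plus lower semicontinuity argument as in the proof of \cref{res:existence}; but routing through \cref{res:existence} and \cref{res:H=lstort} is cleaner.)
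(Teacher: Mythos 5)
Your argument is correct and coincides with the paper's own proof: the corollary is obtained by combining \cref{res:existence} (which produces a $\Phi$-Cheeger $N$-cluster under \ref{p:lsc}, \ref{p:c}, \ref{p:m}) with the last part of \cref{res:H=lstort} (which, under \ref{p:m}, shows that such a cluster is also a $(1,\Phi)$-eigen-$N$-cluster). Nothing further is needed.
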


\begin{remark}[More general version of \cref{res:existence}]
\label{rem:existence_weak}
The assumptions on $\Omega$ yielding the validity of \cref{res:existence} can be considerably weakened.
In fact, it is enough to assume that $\Omega\subset\R^d$ is a measurable set with $|\Omega|\in(0,\infty)$ containing at least one viable competitor.
We omit the proof of this statement (also compare with the general approach of~\cite{FPSS24}).
\end{remark}

\subsection{Properties of minimizers}

Let us collect some basic yet quite useful properties of $\Phi$-Cheeger $N$-clusters, i.e., minimizers of~\eqref{eq:H}.

\begin{proposition}[Properties of $\Phi$-Cheeger $N$-clusters]
\label{res:props_cheeger}
If $\nupla E$ is a $\Phi$-Cheeger $N$-cluster of $\Omega$, then:
\begin{enumerate}[label=(\roman*)]

\item 
\label{item:prop_cheeger_meas}
enforcing \ref{p:c}, 
the following uniform lower bound
\begin{equation}
\label{eq:prop_cheeger_meas}
|\nupla E_i|
\ge 
|B_1| \left(\frac{\delta d}{H^{\Phi,N}(\Omega)}\right)^d,
\quad
\text{for}\
i=1,\dots,N,
\end{equation}
holds, where $\delta>0$ is as in \ref{p:c};

\item
\label{item:prop_cheeger_modified_1-adj}
enforcing \ref{p:m}, 
$\nupla E$ can be modified into a $1$-adjusted $\Phi$-Cheeger $N$-cluster.

\end{enumerate}
\end{proposition}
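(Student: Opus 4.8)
I would treat the two claims in turn. For \ref{item:prop_cheeger_meas} the argument is short: since $\nupla E$ is a $\Phi$-Cheeger $N$-cluster, $\Phi\big(\per(\nupla E)/|\nupla E|\big)=H^{\Phi,N}(\Omega)$, so \ref{p:c} forces $\per(\nupla E_i)/|\nupla E_i|\le\delta^{-1}H^{\Phi,N}(\Omega)$ for every $i$; on the other hand the isoperimetric inequality on the chamber $\nupla E_i$ gives $\per(\nupla E_i)/|\nupla E_i|\ge d\,(|B_1|/|\nupla E_i|)^{1/d}$. Combining the two bounds and isolating $|\nupla E_i|$ yields \eqref{eq:prop_cheeger_meas}.

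For \ref{item:prop_cheeger_modified_1-adj} the plan is a replace-and-pass-to-the-limit scheme. (When the stronger \ref{p:ms} holds, \cref{res:4_1-adj} shows the given $\nupla E$ is already $1$-adjusted, so the point is precisely the non-strict case.) The basic move is: given a $\Phi$-Cheeger $N$-cluster $\nupla F$ and an index $i$, the set $\Omega\setminus\Omega_i^{\nupla F}$ has finite measure and contains the viable competitor $\nupla F_i$, hence admits a Cheeger set $\tilde{\nupla F}_i\subset\Omega\setminus\Omega_i^{\nupla F}$; swapping $\nupla F_i$ for $\tilde{\nupla F}_i$ again yields an $N$-cluster, whose ratio vector changes only in the $i$-th slot, where by \eqref{eq:1_adj_h_chamber=complement} it drops from $\per(\nupla F_i)/|\nupla F_i|$ to $h(\Omega\setminus\Omega_i^{\nupla F})$; by \ref{p:m} the new cluster is still a $\Phi$-Cheeger $N$-cluster, and it is $1$-adjusted at $i$ since $\Omega_i^{\nupla F}$ is unchanged. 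Call this operation $R_i$. A single pass $R_N\circ\cdots\circ R_1$ need not produce a $1$-adjusted cluster, since applying $R_j$ can spoil the $1$-adjustedness at a previously treated index; hence I would iterate. Starting from $\nupla E^0:=\nupla E$, set $\nupla E^{k}:=R_{i_k}(\nupla E^{k-1})$ with $i_k$ running cyclically over $1,\dots,N$. Each $\nupla E^k$ is a $\Phi$-Cheeger $N$-cluster, and the ratio vectors $v^k:=\per(\nupla E^k)/|\nupla E^k|$ are component-wise non-increasing (each step lowers exactly one entry) and, by the isoperimetric inequality, bounded from below, so $v^k\to v^\infty\in\ort$.

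Next I would set up compactness along the iteration: $\per(\nupla E^k_i)=v^k_i\,|\nupla E^k_i|\le v^0_i\,|\Omega|$, while $|\nupla E^k_i|$ is bounded above by $|\Omega|$ and, again by the isoperimetric inequality, below away from zero; thus, up to a subsequence, $\nupla E^k_i\to\nupla E^\infty_i$ in $L^1(\Omega)$ for every $i$, and $\nupla E^\infty$ is an $N$-cluster with $\per(\nupla E^\infty)/|\nupla E^\infty|\le v^\infty$ by lower semicontinuity of the perimeter. Invoking \ref{p:m} twice, $\Phi\big(\per(\nupla E^\infty)/|\nupla E^\infty|\big)\le\Phi(v^\infty)\le\Phi(v^k)=H^{\Phi,N}(\Omega)$ for all $k$, and the reverse inequality is immediate since $\nupla E^\infty$ is a competitor; hence $\nupla E^\infty$ is again a $\Phi$-Cheeger $N$-cluster.

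The final step — showing that $\nupla E^\infty$ is $1$-adjusted — is where I expect the main difficulty. Fix $i$; along the infinitely many steps that modify the $i$-th chamber one has, by construction of $R_i$, the identity $h(\Omega\setminus\Omega_i^{\nupla E^{k-1}})=v^k_i$, whose right-hand side tends to $v^\infty_i$. The domains $\Omega\setminus\Omega_i^{\nupla E^{k-1}}$ converge in $L^1$ to $\Omega\setminus\Omega_i^{\nupla E^\infty}$ along a subsequence, with the removed chambers of uniformly bounded perimeter, so the key point is to pass to the limit in these identities: this requires a stability result for the Cheeger constant under $L^1$-convergence of domains with equibounded complement perimeter, which in turn rests on the density and regularity estimates for Cheeger sets (see \cite{Leo15}), and it yields $h(\Omega\setminus\Omega_i^{\nupla E^\infty})\ge v^\infty_i$. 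Together with the always-valid chain $h(\Omega\setminus\Omega_i^{\nupla E^\infty})\le h(\nupla E^\infty_i)\le\per(\nupla E^\infty_i)/|\nupla E^\infty_i|\le v^\infty_i$ — monotonicity of $h$ under inclusion, then \eqref{eq:1_adj_h_chamber=complement}, then the previous paragraph — all four terms coincide, so $\per(\nupla E^\infty_i)/|\nupla E^\infty_i|=h(\Omega\setminus\Omega_i^{\nupla E^\infty})$ for every $i$, i.e.\ $\nupla E^\infty$ is a $1$-adjusted $\Phi$-Cheeger $N$-cluster. Everything except this Cheeger-constant limit reduces to the isoperimetric inequality and the monotonicity of $\Phi$.
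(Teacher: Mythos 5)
Your proof of \ref{item:prop_cheeger_meas} is correct and is exactly the paper's argument: the paper simply points back to the proof of \cref{res:lower_bound_H}, replacing the almost-minimizing cluster by the minimizer, which is what you do.

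For \ref{item:prop_cheeger_modified_1-adj} you take a genuinely different route. The paper performs a \emph{single} pass: it replaces chamber $1$ by a Cheeger set of $\Omega\setminus\Omega^{\nupla E}_1$, notes via \eqref{eq:1_adj_h_chamber=complement} and \ref{p:m} that the new cluster is still a $\Phi$-Cheeger $N$-cluster, and then ``repeats the procedure $N-1$ times on the remaining indexes'', stopping there. You instead argue that one pass may not suffice (because a later replacement changes the region available to an already-treated chamber) and set up a cyclic iteration with a compactness/limit argument. The bookkeeping part of your scheme is fine: the ratio vectors $v^k$ are componentwise non-increasing and bounded below by the isoperimetric inequality, the chambers have equibounded perimeter and measures bounded away from zero, so a subsequential $L^1$ limit $\nupla E^\infty$ exists, and monotonicity \ref{p:m} alone (no lower semicontinuity of $\Phi$ needed) shows $\nupla E^\infty$ is again a $\Phi$-Cheeger $N$-cluster.

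The genuine gap is the step you yourself flag as the main difficulty and then merely assert: from $h\bigl(\Omega\setminus\Omega_i^{\nupla E^{k-1}}\bigr)=v^k_i\to v^\infty_i$ and $L^1$-convergence of the regions you claim $h\bigl(\Omega\setminus\Omega_i^{\nupla E^\infty}\bigr)\ge v^\infty_i$, invoking an unproved ``stability of the Cheeger constant under $L^1$-convergence of domains with equibounded complement perimeter''. No such general principle is available, and the direction you need is the problematic one: you must rule out that the limit region contains a competitor $F$ with $\per(F)/|F|<v^\infty_i$ which is, for every $k$, partially covered by the other chambers of $\nupla E^{k-1}$. The natural substitute $F\setminus\Omega_i^{\nupla E^{k-1}}$ only satisfies $\per\bigl(F\setminus\Omega_i^{\nupla E^{k-1}}\bigr)\le\per(F)+\per\bigl(\Omega_i^{\nupla E^{k-1}}\bigr)$ up to localization, and the perimeters of the chambers are bounded but do \emph{not} vanish, so the error in the ratio does not go to zero even though the measure of the overlap does. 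Closing this would require seriously exploiting the fine structure of the replaced chambers (they are Cheeger sets of their regions, hence $(\Lambda,r_0)$-minimizers with density estimates), which is a substantial missing argument rather than a citable fact from \cite{Leo15}. As written, then, your proof of \ref{item:prop_cheeger_modified_1-adj} does not close, whereas the paper's proof avoids the limit procedure altogether by asserting that the finite, one-pass replacement already yields a $1$-adjusted minimizer; if you doubt that assertion (your stated reason for iterating), the burden is precisely to either justify the one-pass claim or prove the Cheeger-constant stability you are assuming, and neither is done in your proposal.
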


\begin{proof}
We prove each statement separately.

\vspace{1ex}

\textit{Proof of \ref{item:prop_cheeger_meas}}.
The proof is essentially the same as that of~\cref{res:lower_bound_H}, the only difference being that one works with a minimizer $\nupla E$. We omit the simple details.

\vspace{1ex}

\textit{Proof of \ref{item:prop_cheeger_modified_1-adj}}.
The proof is quite similar to the first part of the proof of \cref{res:4_1-adj}.
By standard results (e.g., see~\cite{FPSS24}*{Sect.~3.1}), $\Omega\setminus\Omega^{\nupla E}_1$ admits a Cheeger set $\tilde{\nupla E}_1$, being $\Omega^{\nupla E}_i$ defined as in~\eqref{eq:def_M_i}. 
Consequently,
\begin{equation*}
\frac{\per(\tilde{\nupla E}_1)}{|\tilde{\nupla E_1}|} 
\le 
\frac{\per(A)}{|A|} 
\quad 
\text{for any}\
A\subset\Omega\setminus\Omega^{\nupla E}_1\
\text{such that}\ |A|>0.
\end{equation*}
As this holds also for $A=\nupla E_1$, we get that 
\begin{equation*}
h(\tilde{\nupla E}_1, \nupla E_2, \dots,\nupla E_N)\le {h(\nupla E)}.
\end{equation*}
By \ref{p:m}, the $N$-cluster $(\tilde{\nupla E}_1, \nupla E_2,\dots,\nupla E_N)$ is a $\Phi$-Cheeger $N$-cluster of $\Omega$. 
Repeating this procedure $N-1$ times on the remaining indexes gives the desired $1$-adjusted $\Phi$-Cheeger $N$-cluster of $\Omega$ and concludes the proof.
\end{proof}

\begin{remark}
To restate the results of the present subsection in the abstract setting of~\cite{FPSS24}, the perimeter-measure pair must satisfy properties (P.4), (P.5), and (P.6) in~\cite{FPSS24}*{Sect.~2.1} (this also ensures the validity of~\cite{FPSS24}*{Th.~3.6}, needed in the proof of \cref{res:existence}).
Note that the lower bound~\eqref{eq:prop_cheeger_meas} (and, consequently, also the one in \eqref{eq:lower_bound_H}) requires a finer version of the \emph{isoperimetric property} (P.6) of~\cite{FPSS24}*{Sect.~2.1}, see, e.g.,~\cite{FPSS24}*{Prop.~7.2} in the context of metric-measure spaces and the discussion in~\cite{FPSS24}*{Sect.~7.3} for non-local perimeter functionals.
\end{remark}

\subsection{Regularity of \texorpdfstring{$1$}{1}-adjusted minimizers}
\label{subsec:regularity}

We now establish the regularity of $1$-adjusted minimizers of~\eqref{eq:H}, assuming~\ref{p:c}.
We adapt~\cite{BP18}*{Sect.~3}, where the authors deal with $1$-adjusted minimizers of~\eqref{eq:lstort_1} for the choice $\Phi=\|\cdot\|_\infty$. 
We omit the full proofs and only detail the minor changes. To start, we recall the following two standard definitions.

\begin{definition}[Mean curvature bounded from above]\label{defin:mc_bound}
A set $F\subset\Omega$ has \emph{distributional mean curvature bounded from above} at scale $r_0\in(0,\infty]$ by $g\in L^1_\loc(\Omega)$ in $\Omega$ if 
\begin{equation*}
\per(F;B_r(x)) 
\le 
\per(E;B_r(x)) + 
\int_{F\setminus E} g\de y
\end{equation*}
whenever $B_r(x)\Subset\Omega$ with $x\in\R^d$, $r\in(0,r_0)$, and $E\subset F$ with $F\setminus E\Subset B_r(x)$.
\end{definition}

\begin{definition}[$(\Lambda,r_0)$-minimizer of the perimeter]\label{def:L_minimizer}
A set $F\subset\R^d$ is a \emph{$(\Lambda,r_0)$-minimizer of the perimeter} in $\Omega$, with $\Lambda<\infty$ and $r_0\in[0,\infty]$, if
\begin{equation*}
\per(F;B_r(x))\le \per(E;B_r(x)) + \Lambda|E\Delta F|
\end{equation*}
whenever $E\subset\R^d$ is such that $E\Delta F\Subset B_r(x) \cap \Omega$ with $x\in\R^d$, $r\in(0,r_0)$.
\end{definition}

The following two results give curvature bounds for $1$-adjusted minimizers of~\eqref{eq:H}, inside a non-empty, bounded, and open set $\Omega$, assuming~\ref{p:c}.

\begin{lemma}[Curvature bound, I]
Let property~\ref{p:c} be in force.
If $\nupla E$ is a $1$-adjusted $\Phi$-Cheeger $N$-cluster of $\Omega$, then the sets $\Omega^{\nupla E}_i$ defined in~\eqref{eq:def_M_i}, $i=1,\dots,N$, have distributional mean curvature bounded from above at scale $r_0=\delta d(H^{\Phi,N}(\Omega))^{-1}$ by $H^{\Phi,N}(\Omega)\delta^{-1}$ in $\Omega$, where $\delta>0$ is as in~\ref{p:c}.
\end{lemma}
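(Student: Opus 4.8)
The plan is to follow the strategy of~\cite{BP18}*{Sect.~3}, exploiting the $1$-adjusted structure of $\nupla E$ to relate each set $\Omega_i^{\nupla E}$ to a Cheeger-type minimization on its complement. First I would fix an index $i$ and recall that, since $\nupla E$ is $1$-adjusted, $\nupla E_i$ is a Cheeger set of $\Omega\setminus\Omega_i^{\nupla E}$ by \cref{rem:1-adj_self}, so that $\per(\nupla E_i)/|\nupla E_i| = h(\Omega\setminus\Omega_i^{\nupla E}) = H^{\Phi,N}(\Omega)\cdot(\text{something bounded})$; more precisely, the key quantitative input is the measure lower bound $|\nupla E_i|\ge |B_1|(\delta d / H^{\Phi,N}(\Omega))^d$ from \cref{res:props_cheeger}\ref{item:prop_cheeger_meas}, together with \cref{res:lower_bound_H}, which pins down the admissible scale $r_0$.

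The core of the argument is a \emph{competitor comparison}. Given a ball $B_r(x)\Subset\Omega$ with $r\in(0,r_0)$ and a set $G\subset\Omega_i^{\nupla E}$ with $\Omega_i^{\nupla E}\setminus G\Subset B_r(x)$, I would build a new $N$-cluster by replacing, for each $j\ne i$, the chamber $\nupla E_j$ with $\nupla E_j\cap G$ (shrinking the chambers so their union becomes $G$), and keeping $\nupla E_i$ as the complementary region $\Omega\setminus G$ restricted appropriately — or, dually, enlarging $\nupla E_i$ by the sliver $\Omega_i^{\nupla E}\setminus G$. Testing $H^{\Phi,N}(\Omega)$ against this competitor, using~\ref{p:c} to bound $\Phi$ from below by $\delta\|\cdot\|_1$, and using the minimality of $\nupla E$, produces an inequality of the form
\begin{equation*}
\per(\Omega_i^{\nupla E};B_r(x)) \le \per(G;B_r(x)) + \frac{H^{\Phi,N}(\Omega)}{\delta}\,|\Omega_i^{\nupla E}\setminus G|,
\end{equation*}
which is exactly the statement that $\Omega_i^{\nupla E}$ has distributional mean curvature bounded from above by $g\equiv H^{\Phi,N}(\Omega)/\delta$ at scale $r_0$, in the sense of \cref{defin:mc_bound}. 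The role of the scale restriction $r<r_0=\delta d/H^{\Phi,N}(\Omega)$ is to guarantee that the perturbed chambers retain positive measure (so the competitor is a genuine $N$-cluster): since each $\nupla E_j$ has measure at least $|B_1|(\delta d/H^{\Phi,N}(\Omega))^d = |B_1|r_0^d$ and we only remove mass contained in $B_r(x)$ with $r<r_0$, no chamber can be annihilated.

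The main obstacle I anticipate is the bookkeeping in the competitor construction: one must carefully verify that subtracting the region $\Omega_i^{\nupla E}\setminus G$ from $\Omega_i^{\nupla E}$ and adding it to $\nupla E_i$ (or redistributing it among the $\nupla E_j$, $j\ne i$) yields an $N$-tuple that is still admissible — pairwise disjoint, contained in $\Omega$, with all chambers of positive finite perimeter — and that the resulting change in $\Phi$ is controlled by the single perimeter term $\per(\Omega_i^{\nupla E};B_r(x)) - \per(G;B_r(x))$ plus the volume error. Here one uses that modifying only $\Omega_i^{\nupla E}$ inside $B_r(x)$ leaves $\per(\nupla E_i)$ unchanged outside $B_r(x)$, and that the coercivity~\ref{p:c} lets one pass from the (possibly non-smooth) $\Phi$ to the additive quantity $\sum_j \per(\nupla E_j)/|\nupla E_j|$; combined with~\ref{p:m} on the unchanged chambers this isolates the single index $i$. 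Everything else — the measure-theoretic identities for $|\cdot\Delta\cdot|$ and the localization of perimeter on $B_r(x)$ — is routine and can be imported verbatim from~\cite{BP18}.
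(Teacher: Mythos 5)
There is a genuine gap in the core step. You propose to obtain the inequality $\per(\Omega^{\nupla E}_i;B_r(x))\le\per(G;B_r(x))+\tfrac{H^{\Phi,N}(\Omega)}{\delta}|\Omega^{\nupla E}_i\setminus G|$ by testing the minimality of $\nupla E$ against the modified cluster and using~\ref{p:c} to pass from $\Phi$ to $\delta\|\cdot\|_1$. This cannot work as stated: minimality gives $\Phi\big(\per(\nupla E)/|\nupla E|\big)\le\Phi\big(\per(\tilde{\nupla E})/|\tilde{\nupla E}|\big)$, and coercivity gives $\delta\sum_j\per(\tilde{\nupla E}_j)/|\tilde{\nupla E}_j|\le\Phi\big(\per(\tilde{\nupla E})/|\tilde{\nupla E}|\big)$; both are lower bounds on the energy of the competitor, so together they give no upper control on how much the perimeter of $\Omega^{\nupla E}_i$ may drop (for instance, with $\Phi=\|\cdot\|_\infty$ and the maximal ratio attained by the untouched chamber $\nupla E_i$, the minimality inequality for your competitor is trivially satisfied and carries no information on the perturbed chambers). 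Moreover, your ``isolation of the index'' invokes~\ref{p:m}, which is not among the hypotheses of the lemma: the statement assumes only~\ref{p:c}, and the paper's argument never compares the $\Phi$-energies of two clusters. Note also that the $1$-adjustedness you use concerns the wrong chamber: for the curvature bound on $\Omega^{\nupla E}_i$ the relevant information is about the chambers $\nupla E_j$ with $j\ne i$, not about $\nupla E_i$.

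The mechanism in the paper (following~\cite{BP18}*{Lem.~3.3}) is chamber-wise. Set $S=\Omega^{\nupla E}_i\setminus G\Subset B_r(x)$ with $r<r_0=\delta d(H^{\Phi,N}(\Omega))^{-1}$. By \cref{res:props_cheeger}\ref{item:prop_cheeger_meas} one has $|\nupla E_j\setminus S|\ge|\nupla E_j|-|B_r|>0$; this positivity is the only place where the scale $r_0$ and~\ref{p:c} enter (your reading of the role of $r_0$ is correct). Since, by \cref{rem:1-adj_self}, each $\nupla E_j$ is a Cheeger set of $\Omega\setminus\Omega^{\nupla E}_j$ and $\nupla E_j\setminus S$ is an admissible competitor there, one gets $\per(\nupla E_j\setminus S)\ge\frac{\per(\nupla E_j)}{|\nupla E_j|}\,|\nupla E_j\setminus S|$, i.e.\ $\per(\nupla E_j)-\per(\nupla E_j\setminus S)\le h(\nupla E_j)\,|\nupla E_j\cap S|$, and the curvature constant comes from applying~\ref{p:c} to the \emph{minimizer itself}: $h(\nupla E_j)\le\|h(\nupla E)\|_1\le\Phi(h(\nupla E))/\delta=H^{\Phi,N}(\Omega)/\delta$, where the last equality uses $1$-adjustedness and minimality. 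Summing over $j\ne i$ and checking that removing $S$ creates no new interfaces between distinct chambers (so the cancellations in the perimeter of the union only help) yields exactly the claimed bound for $\Omega^{\nupla E}_i$, localized in $B_r(x)$ since $F$ and $G$ agree outside a compact subset of the ball. In short: your competitor and scale analysis match the paper, but the inequality must be extracted from the Cheeger property of the perturbed chambers encoded in the $1$-adjusted hypothesis, not from an energy comparison in $\Phi$, which with only~\ref{p:c} available goes in the wrong direction.
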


\begin{proof}
The proof goes as that of~\cite{BP18}*{Lem.~3.3}. 
The first part of the argument requires the choice $r_0=\delta d(H^{\Phi,N}(\Omega))^{-1}$ and~\ref{p:c}. 
For the second part of the argument, to achieve the upper bound on the curvature, it is enough to observe that
\begin{equation}\label{eq:change_from_BP}
\frac{\per(\nupla E_i)}{|\nupla E_i|} 
= 
h(\nupla E_i) 
\le 
\|{h(\nupla E)}\|_1  
\le 
\frac{\Phi({h(\E)})}{\delta} 
= 
\frac{H^{\Phi,N}(\Omega)}{\delta},
\end{equation}
owing to the $1$-adjusted hypothesis on the cluster, \cref{rem:1-adj_self} (see~\eqref{eq:1-adj_self}), and to~\ref{p:c}.
We leave the simple details to the reader.
\end{proof}

\begin{lemma}[Curvature bound, II]
Let property~\ref{p:c} be in force.
If $\E$ is a $1$-adjusted $\Phi$-Cheeger $N$-cluster of $\Omega$, then each chamber $\nupla E_i$ has distributional mean curvature bounded from above at scale $r_0=\delta d(H^{\Phi,N}(\Omega))^{-1}$ by $H^{\Phi,N}(\Omega)\delta^{-1}$ in $\Omega$, where $\delta>0$ is as in~\ref{p:c}.
\end{lemma}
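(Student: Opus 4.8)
The plan is to mirror the argument for the previous lemma (Curvature bound, I), which already establishes that each $\Omega^{\nupla E}_i$ has distributional mean curvature bounded from above at scale $r_0 = \delta d (H^{\Phi,N}(\Omega))^{-1}$ by $H^{\Phi,N}(\Omega)\delta^{-1}$, and to transfer this bound from the union $\Omega^{\nupla E}_i = \bigcup_{j\neq i}\nupla E_j$ to the individual chamber $\nupla E_i$. The key point is that, by the $1$-adjusted hypothesis, $\nupla E_i$ is a Cheeger set of $\Omega\setminus\Omega^{\nupla E}_i$ (recall \cref{rem:1-adj_self}), and Cheeger sets are well known to be $(\Lambda, r_0)$-minimizers of the perimeter with $\Lambda$ equal to their Cheeger constant; one then converts the perimeter almost-minimality into the one-sided curvature bound of \cref{defin:mc_bound}.

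Concretely, first I would recall that since $\nupla E$ is $1$-adjusted, \eqref{eq:1-adj_self} gives $h(\nupla E_i) = \per(\nupla E_i)/|\nupla E_i|$, so $\nupla E_i$ is a Cheeger set of itself; combined with \eqref{eq:change_from_BP}, which bounds $\per(\nupla E_i)/|\nupla E_i| = h(\nupla E_i) \le H^{\Phi,N}(\Omega)\delta^{-1}$ using~\ref{p:c}. Next, for any competitor $E\subset\nupla E_i$ with $\nupla E_i\setminus E\Subset B_r(x)$ and $B_r(x)\Subset\Omega$, one tests the Cheeger minimality of $\nupla E_i$ against $E$: from $\per(\nupla E_i)/|\nupla E_i|\le \per(E)/|E|$ and the locality of perimeter (so that $\per(\nupla E_i) - \per(E) = \per(\nupla E_i;B_r(x)) - \per(E;B_r(x))$ since the two sets coincide outside $B_r(x)$), one derives
\begin{equation*}
\per(\nupla E_i;B_r(x)) - \per(E;B_r(x)) \le h(\nupla E_i)\,(|\nupla E_i| - |E|) = h(\nupla E_i)\int_{\nupla E_i\setminus E} 1\de y.
\end{equation*}
Since $h(\nupla E_i)\le H^{\Phi,N}(\Omega)\delta^{-1}$, this is exactly the statement that $\nupla E_i$ has distributional mean curvature bounded from above by the constant $g\equiv H^{\Phi,N}(\Omega)\delta^{-1}$ in the sense of \cref{defin:mc_bound}; the scale $r_0 = \delta d(H^{\Phi,N}(\Omega))^{-1}$ plays no role in this half of the argument, but is carried along for uniformity with the companion statement and with~\cite{BP18}*{Sect.~3}. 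As with the preceding lemma, the only substantive change from~\cite{BP18}*{Lem.~3.4} is the replacement of the $\|\cdot\|_\infty$-specific estimate by~\eqref{eq:change_from_BP}, so I would simply point to that computation and leave the routine details to the reader.

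The main (mild) obstacle is bookkeeping rather than conceptual: one must be careful that the inequality $\per(\nupla E_i)/|\nupla E_i| \le \per(E)/|E|$ requires $|E| > 0$, which may fail if $\nupla E_i\setminus E$ exhausts $\nupla E_i$ — but for $r$ small enough this cannot happen since $\nupla E_i\setminus E\Subset B_r(x)$ forces $|E|\ge |\nupla E_i| - |B_r(x)| > 0$ once $|B_r(x)| < |\nupla E_i|$, and the latter holds on the relevant scale thanks to the uniform volume lower bound \eqref{eq:prop_cheeger_meas} from \cref{res:props_cheeger}\ref{item:prop_cheeger_meas}. With that caveat dispatched, the proof reduces to citing~\cite{BP18}*{Lem.~3.4} together with~\eqref{eq:change_from_BP}.
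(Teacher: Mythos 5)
Your proof is correct and takes essentially the same route as the paper: both arguments exploit that, by $1$-adjustedness, each chamber is a Cheeger set (the paper regards $\nupla E_i$ as a Cheeger set of $\Omega\setminus\Omega^{\nupla E}_i$ and cites \cite{LNS17}*{Lem.~2.2} for the resulting curvature bound, while you regard it as a Cheeger set of itself via \eqref{eq:1-adj_self} and write out the two-line comparison computation), and both then bound the curvature constant by $H^{\Phi,N}(\Omega)\delta^{-1}$ through the chain \eqref{eq:change_from_BP}. As a minor remark, your caveat about $|E|=0$ is harmless even without invoking \eqref{eq:prop_cheeger_meas}, since in that case $\per(E;B_r(x))=0$ and $\per(\nupla E_i;B_r(x))\le\per(\nupla E_i)=h(\nupla E_i)\,|\nupla E_i\setminus E|$ makes the inequality trivial, which is consistent with the paper's closing observation that any constant larger than a valid upper curvature bound is again a valid bound.
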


\begin{proof}
As $\nupla E$ is $1$-adjusted, by \cref{rem:1-adj_self} each chamber $\nupla E_i$ is a Cheeger set of $\Omega\setminus\Omega^{\nupla E}_i$ for every  $i=1,\dots,N$. 
Therefore, by standard results (e.g., see~\cite{LNS17}*{Lem.~2.2}), $\nupla E_i$ has distributional mean curvature bounded from above at scale $r_0=\delta d (H^{\Phi,N}(\Omega))^{-1}$ by $h(\Omega\setminus\Omega^{\nupla E}_i)$ in $\Omega$.
Recalling~\eqref{eq:1_adj_h_chamber=complement} and using~\ref{p:c}, we have
\begin{equation*}
h(\Omega\setminus\Omega^{\nupla E}_i) 
= 
h(\nupla E_i) 
\le 
\|{h(\nupla E)}\|_1 
\le 
\frac{\Phi({h(\nupla E)})}{\delta}
= 
\frac{H^{\Phi,N}(\Omega)}{\delta},
\end{equation*}
so that the conclusion follows by noticing that, by \cref{defin:mc_bound}, if $c_1>0$ is a bound from above to the distributional curvature, so it is any $c_2>c_1$.
\end{proof}

The following result states that all chambers of a $1$-adjusted minimizer of~\eqref{eq:H} on a non-empty, bounded, and open set $\Omega$ are almost minimizers of the perimeter in the sense of \cref{def:L_minimizer}.

\begin{lemma}[Almost minimizer]
\label{res:chamber_almost_min}
Let \ref{p:c} be in force.
If $\nupla E$ is a $1$-adjusted $\Phi$-Cheeger $N$-cluster of $\Omega$, then each chamber $\nupla E_i$ is a $(\Lambda,r_0)$-minimizer of the perimeter in $\Omega$, with $\Lambda = H^{\Phi,N}(\Omega)\delta^{-1}$ and $r_0=\delta d (H^{\Phi,N}(\Omega))^{-1}$, where  $\delta>0$ is as in~\ref{p:c}.
\end{lemma}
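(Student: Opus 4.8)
The plan is to deduce \cref{res:chamber_almost_min} directly from the two curvature bounds just established, together with a standard local comparison argument. The key observation is that a $(\Lambda, r_0)$-minimizer of the perimeter is precisely a set whose "variational mean curvature'' is bounded (in absolute value, or from both sides) by $\Lambda$ at scale $r_0$, and we have already controlled the curvature of each chamber $\nupla E_i$ from above by $H^{\Phi,N}(\Omega)\delta^{-1}$ via \textbf{Curvature bound, II}. The missing ingredient is a matching lower bound, i.e.\ control of the curvature of the complement of $\nupla E_i$; this is where \textbf{Curvature bound, I} enters, since $\R^d\setminus\nupla E_i \supset \Omega_i^{\nupla E} = \bigcup_{j\ne i}\nupla E_j$ and the sets $\Omega_i^{\nupla E}$ have their distributional mean curvature bounded from above by the same constant at the same scale. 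So the two lemmas together sandwich the curvature of $\nupla E_i$ from both sides.

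Concretely, first I would fix $i$, take $x\in\R^d$, $r\in(0,r_0)$, and a competitor $E$ with $E\Delta \nupla E_i \Subset B_r(x)\cap\Omega$, and split the comparison into the two one-sided cases. When $E\subset\nupla E_i$ (so we are removing mass), \textbf{Curvature bound, II} gives $\per(\nupla E_i;B_r(x))\le\per(E;B_r(x)) + \int_{\nupla E_i\setminus E} H^{\Phi,N}(\Omega)\delta^{-1}\de y \le \per(E;B_r(x)) + H^{\Phi,N}(\Omega)\delta^{-1}|E\Delta\nupla E_i|$, which is exactly the $(\Lambda,r_0)$-minimality inequality in this case. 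When instead $E\supset\nupla E_i$ (adding mass), one compares $\nupla E_i$ with $E$ by using that $\R^d\setminus E \subset \R^d\setminus\nupla E_i$ and applying the curvature bound on the complement: here one needs that the added region $E\setminus\nupla E_i$ does not overlap the other chambers — but this may fail, so the cleanest route is to pass through the set $\Omega\setminus\Omega^{\nupla E}_i$, of which $\nupla E_i$ is a Cheeger set by \cref{rem:1-adj_self}, exactly as in the argument yielding \textbf{Curvature bound, II}, and invoke the two-sided almost-minimality of Cheeger sets (e.g.\ \cite{LNS17}*{Lem.~2.2} or the classical regularity theory for Cheeger sets). For a general competitor $E$, decompose the comparison through $E\cap\nupla E_i$: writing $\per(\nupla E_i;B_r(x)) - \per(E;B_r(x))$ and telescoping through $\per(E\cap\nupla E_i;B_r(x))$, one applies the inward estimate to the pair $(\nupla E_i, E\cap\nupla E_i)$ and the outward estimate to the pair $(E\cap\nupla E_i, E)$, picking up $\Lambda|(\nupla E_i\setminus E)| + \Lambda|(E\setminus\nupla E_i)| = \Lambda|E\Delta\nupla E_i|$, provided the perimeters behave submodularly on $B_r(x)$, which they do.

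Since the paper explicitly says it only details the minor changes with respect to \cite{BP18}*{Sect.~3}, I would keep the write-up short: state that the argument is that of \cite{BP18}*{Lem.~3.4} (or the analogous statement there), observing that the only place where the choice $\Phi=\|\cdot\|_\infty$ was used is the curvature estimate, which has now been replaced by the bound $h(\nupla E_i)\le\Phi(h(\nupla E))\delta^{-1} = H^{\Phi,N}(\Omega)\delta^{-1}$ coming from \ref{p:c} and the $1$-adjusted hypothesis, as already recorded in~\eqref{eq:change_from_BP}. The value of $\Lambda$ and $r_0$ are then read off directly from the two curvature lemmas.

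The main obstacle I anticipate is the bookkeeping in the two-sided comparison for a \emph{general} competitor $E$ (neither contained in nor containing $\nupla E_i$): one must be careful that enlarging $\nupla E_i$ towards $E$ stays inside $\Omega\setminus\Omega^{\nupla E}_i$ — which is not automatic if $E$ intrudes into a neighbouring chamber $\nupla E_j$ — and handle that case by replacing $E$ with $E\setminus\Omega^{\nupla E}_i$, checking this only decreases the right-hand side (using the upper curvature bound on $\Omega^{\nupla E}_i$ from \textbf{Curvature bound, I} to absorb the perimeter contribution of $\partial\Omega^{\nupla E}_i\cap B_r(x)$). This is exactly the interplay between the two curvature lemmas that forces the statement to need \emph{both} of them, and it is the one point where a little genuine (though standard) work is required; everything else is a routine transcription from \cite{BP18}.
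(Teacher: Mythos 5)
Your proposal is correct and takes essentially the same approach as the paper, whose proof simply transcribes the comparison argument of \cite{BP18}*{Prop.~3.4} with the $\|\cdot\|_\infty$-based curvature estimate replaced by the coercivity bound recorded in~\eqref{eq:change_from_BP}. Your sketch of the inward/outward comparison, with intrusions into other chambers removed via $E\setminus\Omega^{\nupla E}_i$ and absorbed using the curvature bound on $\Omega^{\nupla E}_i$, is precisely the content of that transcription.
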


\begin{proof}
The proof goes as that of~\cite{BP18}*{Prop.~3.4}. The only relevant change is to use \ref{p:c} to get an estimate similar to~\eqref{eq:change_from_BP}.
We leave the simple details to the reader.
\end{proof}

In virtue of the standard theory for almost minimizers of the perimeter (refer to~\cite{Mag12book} for an account), we get the following regularity properties for $1$-adjusted minimizers of~\eqref{eq:H} on a non-empty, bounded, and open set $\Omega$. 

\begin{theorem}[Regularity]
\label{res:regularity}
Let \ref{p:c} be in force.
If $\nupla E$ is a $1$-adjusted $\Phi$-Cheeger $N$-cluster of $\Omega$, then the following hold true:
\begin{enumerate}[label=(\roman*),leftmargin=5ex]

\item\label{item:reg_gamma} 
each $\partial^*\nupla E_i\cap \Omega$ is of class $C^{1,\gamma}$ for every $\gamma\in(0,\sfrac12)$;

\item\label{item:reg_haus_dim} 
each $\partial \nupla E_i \setminus \partial^* \nupla E_i$ has Hausdorff dimension at most $d-8$;

\item\label{item:reg_dim7}  
if $d\le 7$, then each $\partial \nupla E_i$ is of class $C^{1,\gamma}$ for every $\gamma\in(0,\sfrac12)$;

\item\label{item:reg_equiv} 
if $\mathscr{H}^{d-1}(\partial \Omega)<\infty$, then there exists a $1$-adjusted $\Phi$-Cheeger $N$-cluster $\tilde{\nupla E}$ of $\Omega$ such that $|\tilde{\nupla E}_i\bigtriangleup\nupla E_i|=0$ and each $\tilde{\nupla E_i}$ is open;

\item\label{item:reg_tang}  
if $\per(\Omega)<\infty$, then each $\partial^*\nupla E_i\cap\Omega$ can meet $\partial^*\Omega$ only tangentially, i.e., if $x\in\partial\nupla E_i \cap \partial^*\Omega$, then $x\in\partial^*\nupla E_i$ and $\nu_\Omega(x)=\nu_{\nupla E_i}(x)$.
\end{enumerate}
\end{theorem}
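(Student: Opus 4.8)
The plan is to read off all five assertions from \cref{res:chamber_almost_min}, which states that each chamber $\nupla E_i$ of a $1$-adjusted $\Phi$-Cheeger $N$-cluster is a $(\Lambda,r_0)$-minimizer of the perimeter in $\Omega$ with the explicit constants $\Lambda=H^{\Phi,N}(\Omega)\delta^{-1}$ and $r_0=\delta d(H^{\Phi,N}(\Omega))^{-1}$, and then to invoke the classical regularity theory for such sets exactly as in~\cite{BP18}*{Sect.~3}, where the same conclusions are drawn for $\Phi=\|\cdot\|_\infty$. The only thing to check is that none of those arguments uses a special feature of $\|\cdot\|_\infty$ beyond the values of $\Lambda$ and $r_0$, which have now been recorded; so the proof is a chain of citations with the constants updated, and the routine details are omitted.

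For~\ref{item:reg_gamma}--\ref{item:reg_dim7}: since the deviation from minimality in \cref{def:L_minimizer} is at most $\Lambda|E\bigtriangleup F|\le\Lambda|B_1|\,r^d$, the spherical excess of $\nupla E_i$ on $B_r(x)$ is controlled by a quantity linear in $r$, so the $\eps$-regularity theorem for almost minimizers of the perimeter and the accompanying excess-decay estimate (see~\cite{Mag12book} and the references therein) give that $\partial^*\nupla E_i\cap\Omega$ is a $C^{1,\gamma}$ hypersurface for every $\gamma\in(0,\sfrac12)$. Federer's dimension-reduction argument, together with the non-existence of singular area-minimizing cones in dimension at most~$7$, yields that the singular set $\partial\nupla E_i\setminus\partial^*\nupla E_i$ has Hausdorff dimension at most $d-8$; here and below $\nupla E_i$ denotes the representative for which $\partial\nupla E_i\cap\Omega=\overline{\partial^*\nupla E_i}\cap\Omega$, so that when $d\le7$ the singular set is empty and $\partial\nupla E_i\cap\Omega$ is a $C^{1,\gamma}$ hypersurface.

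For~\ref{item:reg_equiv}, one upgrades this representative to a globally open one: by the uniform upper and lower volume-density estimates available for $(\Lambda,r_0)$-minimizers, every point of $\overline{\partial^*\nupla E_i}\cap\Omega$ has $\nupla E_i$-density strictly between $0$ and $1$, so the representative $\tilde{\nupla E}_i:=\nupla E_i^{(1)}\cap\Omega$ made of the density-$1$ points of $\nupla E_i$ lying in $\Omega$ is open; the hypothesis $\mathscr{H}^{d-1}(\partial\Omega)<\infty$ (which in particular forces $\per(\Omega)<\infty$) is what allows the argument of~\cite{BP18}*{Sect.~3}, modeled on the Cheeger-set case (cf.~\cite{LNS17}), to also handle the behavior on $\partial\Omega$, and since $\Phi$, $h$, $\per$ and $|\cdot|$ only see Lebesgue equivalence classes, $\tilde{\nupla E}=(\tilde{\nupla E}_1,\dots,\tilde{\nupla E}_N)$ is again a $1$-adjusted $\Phi$-Cheeger $N$-cluster. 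Finally, for~\ref{item:reg_tang}, when $\per(\Omega)<\infty$ one uses that $\nupla E_i\subset\Omega$ is perimeter-minimizing relative to $\Omega$ up to the $\Lambda$-penalization: blowing up at a point $x\in\partial\nupla E_i\cap\partial^*\Omega$, the rescalings of $\Omega$ converge to the half-space with outer unit normal $\nu_\Omega(x)$ while those of $\nupla E_i$ subconverge to a perimeter-minimizing subset of that half-space, and comparing with the half-space itself (as in~\cite{BP18} and the references therein) forces the blow-up of $\nupla E_i$ to be precisely that half-space, whence $x\in\partial^*\nupla E_i$ and $\nu_{\nupla E_i}(x)=\nu_\Omega(x)$.

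The only step that is not a pure black-box application of the almost-minimizers' toolbox is~\ref{item:reg_equiv}: passing from interior regularity to a global open representative requires controlling the interaction between $\nupla E_i$ and $\partial\Omega$, and this is exactly where the assumption $\mathscr{H}^{d-1}(\partial\Omega)<\infty$ enters. Modulo this, the proof is verbatim that of~\cite{BP18}*{Sect.~3} with $\Lambda$ and $r_0$ as above.
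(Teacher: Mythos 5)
Your proposal is correct and follows essentially the same route as the paper: it rests on \cref{res:chamber_almost_min} and the classical regularity theory for $(\Lambda,r_0)$-minimizers (as in \cite{Mag12book}) for \ref{item:reg_gamma}--\ref{item:reg_dim7}, passes to an open representative for \ref{item:reg_equiv} as in \cite{BP18}*{Th.~3.5}, and handles \ref{item:reg_tang} by the blow-up/tangential-contact argument of \cite{LP16}*{App.~A} and \cite{LS18}*{Th.~3.5}. The only cosmetic difference is that for \ref{item:reg_equiv} you take the density-one representative instead of $\nupla E_i\setminus\partial\nupla E_i$, which is equivalent in this setting.
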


\begin{proof}
Due to \cref{res:chamber_almost_min}, properties~\ref{item:reg_gamma} and~\ref{item:reg_haus_dim} follow from the regularity theory of almost minimizers, see~\cite{Mag12book}*{Ths.~21.8 and 28.1}.
Property~\ref{item:reg_dim7} is an immediate consequence of~\ref{item:reg_gamma} and~\ref{item:reg_haus_dim}.
For property~\ref{item:reg_equiv}, it is enough to set $\tilde{\nupla E}_i=\nupla E_i\setminus\partial\nupla E_i$ for $i=1,\dots,N$ (see the proof of~\cite{BP18}*{Th.~3.5}).
Finally, property~\ref{item:reg_tang} can be proved as in~\cite{LP16}*{App.~A} or as in~\cite{LS18}*{Th.~3.5}.
\end{proof}

Finally, owing to \cref{res:regularity} and to~\cite{S15}*{Th.~1.1} (see also~\cites{CT17,GHL23}), one can approximate the chambers of a $1$-adjusted minimizing cluster from within the interior with smooth sets, both in $L^1$ and in perimeter, provided that $\Omega$ is sufficiently regular.

\begin{corollary}[Approximation]
\label{res:approx}
Assume that $\per (\Omega)<\infty$ and $\mathscr H^{d-1}(\partial\Omega\setminus\partial^*\Omega)=0$. 
If $\nupla E$ is a $1$-adjusted $\Phi$-Cheeger $N$-cluster of $\Omega$ such that each chamber $\nupla E_i$ is open, then there exist $N$-clusters $\set*{\nupla E^k : k\in\N}$ of $\Omega$ such that
$\nupla E^k_i\Subset\nupla E_i$,
$\partial\nupla E^k_i$
is smooth for all $k\in\N$,
$\nupla E^k_i\to\nupla E_i$ in  $L^1(\Omega)$ and 
$\per(\nupla E^k_i)\to\per(\nupla E_i)$ as $k\to\infty$, for each $i=1,\dots,N$.
\end{corollary}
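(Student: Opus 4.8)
The plan is to combine the regularity obtained in \cref{res:regularity} with the interior approximation result of~\cite{S15}*{Th.~1.1}, applied separately to each chamber $\nupla E_i$. First I would fix $i\in\set*{1,\dots,N}$ and record what we know about $\nupla E_i$: by hypothesis it is open, and by \cref{res:chamber_almost_min} it is a $(\Lambda,r_0)$-minimizer of the perimeter in $\Omega$ with $\Lambda=H^{\Phi,N}(\Omega)\delta^{-1}$ and $r_0=\delta d(H^{\Phi,N}(\Omega))^{-1}$; in particular $\nupla E_i$ is a set of finite perimeter with $\partial^*\nupla E_i\cap\Omega$ of class $C^{1,\gamma}$ and $\mathscr H^{d-1}$-negligible singular set $\partial\nupla E_i\setminus\partial^*\nupla E_i$ inside $\Omega$ (\cref{res:regularity}\ref{item:reg_gamma}--\ref{item:reg_haus_dim}). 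The assumptions $\per(\Omega)<\infty$ and $\mathscr H^{d-1}(\partial\Omega\setminus\partial^*\Omega)=0$ are exactly the structural hypotheses on the ambient set required by~\cite{S15}*{Th.~1.1} (the boundary of $\Omega$ is $\mathscr H^{d-1}$-rectifiable and essentially coincides with its reduced boundary), so that the cited theorem applies to the almost-minimizing chamber $\nupla E_i$ viewed as a subset of $\Omega$.

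Next I would invoke~\cite{S15}*{Th.~1.1} to produce, for the fixed $i$, a sequence of open sets $\nupla E^k_i$ with smooth boundary, compactly contained in $\nupla E_i$ (this is the ``from within the interior'' part, which uses that $\nupla E_i$ is open and that its topological and measure-theoretic boundaries are controlled as above), such that $\nupla E^k_i\to\nupla E_i$ in $L^1(\Omega)$ and $\per(\nupla E^k_i)\to\per(\nupla E_i)$ as $k\to\infty$. The inclusions $\nupla E^k_i\Subset\nupla E_i\subset\Omega$ guarantee $\nupla E^k_i\subset\Omega$, and since $\per(\nupla E^k_i)<\infty$ (indeed the boundary is smooth and $\nupla E^k_i$ is bounded, being relatively compact in $\Omega$), each $\nupla E^k_i$ has finite perimeter.

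Finally I would assemble the $N$-clusters: set $\nupla E^k=(\nupla E^k_1,\dots,\nupla E^k_N)$ for each $k\in\N$. Since $\nupla E^k_i\Subset\nupla E_i$ and the original chambers $\nupla E_i$ are pairwise disjoint up to measure zero, the sets $\nupla E^k_i$ are also pairwise disjoint; moreover, from $\nupla E^k_i\to\nupla E_i$ in $L^1$ and $|\nupla E_i|>0$ we get $|\nupla E^k_i|>0$ for all $k$ large enough, so $\nupla E^k$ is a genuine $N$-cluster of $\Omega$ for $k$ large (discarding the finitely many initial indices if needed). The convergences $\nupla E^k_i\to\nupla E_i$ in $L^1(\Omega)$ and $\per(\nupla E^k_i)\to\per(\nupla E_i)$ are exactly those delivered by the approximation theorem, so the conclusion follows. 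The only mildly delicate point is checking that the hypotheses on $\Omega$ in the statement match precisely the ambient hypotheses under which~\cite{S15}*{Th.~1.1} guarantees \emph{interior} approximation of an almost-minimizing set with simultaneous $L^1$ and perimeter convergence; once that is confirmed, the rest is bookkeeping on the finitely many chambers. I expect no substantial obstacle beyond this citation check, since all the geometric regularity needed for the chambers has already been established in \cref{res:regularity}.
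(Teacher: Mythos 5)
Your overall strategy is the paper's one (the paper simply defers to the proof of~\cite{BP18}*{Prop.~3.6}, which applies~\cite{S15}*{Th.~1.1} chamber by chamber and then assembles the cluster, exactly as you propose), but there is a genuine gap at the crucial step. Schmidt's theorem is an approximation result for the set being approximated: to apply it to a chamber $\nupla E_i$ you must verify its hypotheses \emph{for $\nupla E_i$ itself}, essentially that $\nupla E_i$ is a bounded open set of finite perimeter with $\mathscr H^{d-1}(\partial\nupla E_i\setminus\partial^*\nupla E_i)=0$. You instead treat $\per(\Omega)<\infty$ and $\mathscr H^{d-1}(\partial\Omega\setminus\partial^*\Omega)=0$ as ``ambient hypotheses'' that directly license the citation, and you only control $\partial\nupla E_i\cap\Omega$ via \cref{res:regularity}\ref{item:reg_gamma}--\ref{item:reg_haus_dim}. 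This leaves untreated precisely the part of $\partial\nupla E_i$ lying on $\partial\Omega$, which for Cheeger-type minimizers is typically of positive $\mathscr H^{d-1}$-measure; without controlling it, the application of~\cite{S15}*{Th.~1.1} to $\nupla E_i$ is unjustified, and deferring this to a ``citation check'' is where the actual proof lives.

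The missing ingredient is \cref{res:regularity}\ref{item:reg_tang} (which you never invoke, and which is the reason $\per(\Omega)<\infty$ appears among the hypotheses): every point of $\partial\nupla E_i\cap\partial^*\Omega$ belongs to $\partial^*\nupla E_i$. Combining this with the decomposition
\begin{equation*}
\partial\nupla E_i\setminus\partial^*\nupla E_i
\subset
\big((\partial\nupla E_i\cap\Omega)\setminus\partial^*\nupla E_i\big)
\cup
\big((\partial\nupla E_i\cap\partial^*\Omega)\setminus\partial^*\nupla E_i\big)
\cup
\big(\partial\Omega\setminus\partial^*\Omega\big),
\end{equation*}
the first set is $\mathscr H^{d-1}$-negligible by \cref{res:regularity}\ref{item:reg_haus_dim}, the second is empty by \cref{res:regularity}\ref{item:reg_tang}, and the third is $\mathscr H^{d-1}$-null by assumption on $\Omega$; hence $\mathscr H^{d-1}(\partial\nupla E_i\setminus\partial^*\nupla E_i)=0$ and~\cite{S15}*{Th.~1.1} applies to each chamber. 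With this verification inserted, the remaining assembly step in your proposal (pairwise disjointness from $\nupla E^k_i\Subset\nupla E_i$, positivity of measures for large $k$, and the stated $L^1$ and perimeter convergences) is correct and matches~\cite{BP18}*{Prop.~3.6}.
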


\begin{proof}
The proof is identical to that of~\cite{BP18}*{Prop.~3.6} and so we omit it.
\end{proof}

\section{Relation with the functional problem}
\label{sec:caroccia-littig}

In this section, we introduce the functional variant of~\eqref{eq:lstort_1}, and we shall see how it is related to~\eqref{eq:H}.
We adapt~\cite{CL19}*{Sect.~3} (see also~\cite{FPSS24}*{Sect.~5}), where the authors deal with $\Phi=\|\cdot\|_1$, omitting the full proofs and only detailing the minor changes.

\subsection{\texorpdfstring{$BV_0$}{BV0} space and the relation \texorpdfstring{$h=\lambda_{1,1}$}{h=lambda 1,1}}

We start with the following definition of $BV_0$ space, which we will use in the remainder of the paper.

\begin{definition}[$BV_0$ space]
\label{def:BV_0}
Given a set $F\subset\R^d$, we let 
\begin{equation}
\label{eq:BV_0}
BV_0(F)
=
\set*{u\in BV(\R^d) : u=0\ \text{a.e.\ in}\ \R^d\setminus F},
\end{equation}
and we let $\mathcal u\in BV_0(F;\R^N)$ if $\nupla u_i\in BV_0(F)$ for  $i=1,\dots,N$.
\end{definition}

\begin{remark}
\label{rem:BV_emb}
Note that $BV_0(F)$ may not coincide with the space of $BV$ functions on $F$ with null trace at the boundary, unless $\partial F$ is sufficiently regular, see~\cite{CL19}*{Rem.~1.1}. Nevertheless, the usual Sobolev embeddings hold on a bounded $F$, as $BV_0(F)\subset BV_0(B_R)$ with $R>0$ such that $F \Subset B_R$. 
\end{remark}

We now introduce the usual, variational definition of first $1$-eigenvalue.

\begin{definition}[First $1$-eigenvalue]
\label{def:lambda_1}
The \emph{first $1$-eigenvalue} of a set $F\subset \R^d$ is
\begin{equation}
\label{eq:lambda_1}
\lambda_{1,1}(F)
=
\inf\set*{|Du|(\R^d) : u\in BV_0(F),\ \|u\|_{L^1}=1}\in[0,\infty].
\end{equation}
\end{definition}

\begin{remark}[Non-negative competitors]
\label{rem:restriction_pos_comp_l1}
The competitors in~\eqref{eq:lambda_1} can be chosen non-neg\-a\-tive. Indeed, by the chain rule, if $u\in BV_0(F)$, then also $|u|$ belongs to $BV_0(F)$ with $|D|u||(\R^d)=|Du|(\R^d)$.
\end{remark}

We recall the following standard result, relating the Cheeger constant of a set $F$, $h(F)$, to the first Dirichlet eigenvalue of the $1$-Laplacian on the set $F$, $\lambda_{1,1}(F)$, refer to~\cite{FPSS24}*{Th.~5.4} (refer also to~\cite{CL19}*{Prop.~2.1}). We remark that, in the given references, it is assumed that $F$ has positive finite measure and contains at least one $N$-cluster, but this is not necessary, and the proof can be repeated almost \emph{verbatim}.

\begin{theorem}[$h=\lambda_{1,1}$]
\label{res:h=lambda_1}
Given a set $F\subset\R^d$, we have $h(F)=\lambda_{1,1}(F)$.
\end{theorem}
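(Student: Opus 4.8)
The plan is to prove the two inequalities $h(F)\le\lambda_{1,1}(F)$ and $\lambda_{1,1}(F)\ge h(F)$ separately, using the coarea formula for $BV$ functions as the essential tool in both directions.

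\textbf{From $h$ to $\lambda_{1,1}$: the coarea/layer-cake argument.} First I would take any admissible competitor $u$ in the variational problem \eqref{eq:lambda_1}; by \cref{rem:restriction_pos_comp_l1} we may assume $u\ge0$. I would write $\{u>t\}$ for the superlevel sets, which satisfy $\{u>t\}\subset F$ for a.e.\ $t>0$ since $u=0$ a.e.\ outside $F$. The coarea formula gives $|Du|(\R^d)=\int_0^\infty\per(\{u>t\})\de t$, while Cavalieri's principle gives $\|u\|_{L^1}=\int_0^\infty|\{u>t\}|\de t$. By definition of the Cheeger constant, $\per(\{u>t\})\ge h(F)\,|\{u>t\}|$ for a.e.\ $t$ (this inequality holds trivially also when $|\{u>t\}|=0$). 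Integrating in $t$ yields $|Du|(\R^d)\ge h(F)\|u\|_{L^1}=h(F)$, and taking the infimum over competitors gives $\lambda_{1,1}(F)\ge h(F)$.

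\textbf{From $\lambda_{1,1}$ to $h$: approximating characteristic functions.} For the reverse inequality, I would take any $E\subset F$ with $|E|>0$ and $\per(E)<\infty$ (if no such $E$ exists, then $h(F)=\infty$ and the claim reads $\lambda_{1,1}(F)\ge\infty$, which forces equality only if $\lambda_{1,1}(F)=\infty$ as well — so I should note that the absence of a viable competitor for $h$ also precludes one for $\lambda_{1,1}$, since any admissible $u$ would have some superlevel set of positive measure and finite perimeter by coarea). Given such an $E$, the function $u=\chi_E/|E|$ lies in $BV_0(F)$ with $\|u\|_{L^1}=1$ and $|Du|(\R^d)=\per(E)/|E|$, so it is an admissible competitor in \eqref{eq:lambda_1}, whence $\lambda_{1,1}(F)\le\per(E)/|E|$. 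Taking the infimum over all such $E$ gives $\lambda_{1,1}(F)\le h(F)$.

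Combining the two inequalities yields $h(F)=\lambda_{1,1}(F)$. The argument is entirely classical; the only point requiring a little care is the bookkeeping of the degenerate cases (when $h(F)=\infty$, i.e.\ $F$ admits no viable competitor, or when $|F|=0$), which is why the excerpt's preamble stresses that the standard references' extra hypotheses are inessential. I expect the mildly delicate step to be checking that the coarea formula applies to $u\in BV_0(F)\subset BV(\R^d)$ and that $\{u>t\}\subset F$ in the measure-theoretic sense for a.e.\ $t$ — both follow from $u=0$ a.e.\ on $\R^d\setminus F$ together with Fubini — but there is no genuine obstacle here.
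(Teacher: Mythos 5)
Your proof is correct and is essentially the argument the paper relies on: for this statement the paper gives no proof of its own but defers to \cite{FPSS24}*{Th.~5.4} and \cite{CL19}*{Prop.~2.1}, whose proof is precisely your coarea/Cavalieri estimate giving $\lambda_{1,1}(F)\ge h(F)$ together with the normalized characteristic functions $\chi_E/|E|$ giving the reverse inequality. The only point you leave implicit is that the competitor $E$ in the second direction must have $|E|<\infty$ for $\chi_E/|E|$ to be admissible; for a completely general $F$ the residual case of a subset with finite perimeter and infinite measure (ratio $0$) is handled by intersecting $E$ with large balls (its complement has finite measure by the isoperimetric inequality), and this case never occurs in the paper's applications, where $F$ is always contained in a bounded set.
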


As a simple yet quite useful consequence of \cref{res:h=lambda_1}, we get the following result.

\begin{corollary}
\label{res:lstort_1_N-set}
Given a set $F\subset\R^d$, it holds that
\begin{equation*}
\lstort^{\Phi,N}_{1,1}(F)
=
\inf\set*{
\Phi({\lambda_{1,1}(\nupla E)}) : \nupla E\ \text{is an $N$-set of $F$}
}\in[0,\infty].
\end{equation*}    
\end{corollary}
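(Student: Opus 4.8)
The plan is to deduce this directly from \cref{res:h=lambda_1} by a pointwise substitution inside the reference function. First I would fix an arbitrary set $F\subset\R^d$ and recall that, by \cref{def:lstort_1}, the quantity $\lstort^{\Phi,N}_{1,1}(F)$ is the infimum of $\Phi(h(\nupla E))=\Phi(h(\nupla E_1),\dots,h(\nupla E_N))$ over all $N$-sets $\nupla E$ of $F$. By \cref{res:h=lambda_1} applied to each chamber, $h(\nupla E_i)=\lambda_{1,1}(\nupla E_i)$ for every $i=1,\dots,N$, hence the vectors $h(\nupla E)$ and $\lambda_{1,1}(\nupla E)=(\lambda_{1,1}(\nupla E_1),\dots,\lambda_{1,1}(\nupla E_N))$ coincide in $[0,\infty]^N$ for every $N$-set $\nupla E$. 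Since the class of competitors (the $N$-sets of $F$) is literally the same in both infima and the functionals agree termwise, the two infima are equal, which is exactly the claimed identity.

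The only point deserving a word of care is the range of the vectors: $h(\nupla E_i)$ could a priori be $+\infty$, so $h(\nupla E)$ need not lie in $\ort$ and $\Phi$ is only defined on $\ort$. I would handle this exactly as it is implicitly handled in \cref{def:lstort_1}: by the convention that $\Phi(\nupla v)=+\infty$ whenever some component $\nupla v_i=+\infty$ (or, equivalently, by noting that $N$-sets with a non-finite Cheeger ratio on some chamber contribute $+\infty$ to the infimum and hence are irrelevant unless $F$ admits no $N$-cluster at all, in which case both sides are $+\infty$). Either way, the equality $h(\nupla E)=\lambda_{1,1}(\nupla E)$ as elements of $[0,\infty]^N$ together with $\Phi$ being evaluated by the same rule on both forces $\Phi(h(\nupla E))=\Phi(\lambda_{1,1}(\nupla E))$, so no genuine obstacle arises here.

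There is essentially no hard part: the statement is a transparent corollary, and the substance is entirely contained in \cref{res:h=lambda_1}. If anything, the single thing to be careful about is not to invoke \cref{res:clusters_only} or any property of $\Phi$ (such as \ref{p:m} or \ref{p:c}), since the corollary is stated for a general set $F$ and a general reference function, with no standing assumptions; the proof must go through the raw definition of $\lstort^{\Phi,N}_{1,1}$ over $N$-sets, not over $N$-clusters. Thus the write-up is two lines: ``By \cref{res:h=lambda_1}, $h(\nupla E_i)=\lambda_{1,1}(\nupla E_i)$ for each $i$ and each $N$-set $\nupla E$ of $F$, whence $\Phi(h(\nupla E))=\Phi(\lambda_{1,1}(\nupla E))$; taking the infimum over all $N$-sets of $F$ gives the claim.''
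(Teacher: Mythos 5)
Your proof is correct and matches the paper's route: the paper presents this corollary as an immediate consequence of \cref{res:h=lambda_1}, applied chamber-wise so that $h(\nupla E)=\lambda_{1,1}(\nupla E)$ for every $N$-set and the two infima coincide termwise. Your extra care about chambers with infinite Cheeger constant is consistent with the paper's conventions (both quantities take values in $[0,\infty]$ and \cref{res:h=lambda_1} holds for arbitrary sets), so it introduces no discrepancy.
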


\begin{remark}
\label{rem:ex_eigenfunctions_1}
Note that, by its very definition, $BV_0(F)\ne\set*{0}$ if and only if $\lambda_{1,1}(F)<\infty$. If, in addition, $F$ is bounded, then there exist eigenfunctions, that is, functions $u\in BV_0(F)$ realizing the infimum in~\eqref{eq:lambda_1}. To see this, it is enough to take an infimizing sequence, to use the compact embeddings (refer to \cref{rem:BV_emb}), and to exploit the lower semicontinuity of the total variation. 
Notice that, in virtue of \cref{rem:restriction_pos_comp_l1}, we can also assume these to be non-negative. In particular, this holds true for any non-empty, bounded, and open set $\Omega$. Moreover, if $\partial\Omega$ is sufficiently regular, then $\lambda_{1,1}(\Omega)$ is the usual first eigenvalue of the $1$-Laplacian on $\Omega$.
\end{remark}

\begin{remark}
The present subsection can be rephrased almost \textit{verbatim} in the abstract setting of~\cite{FPSS24} (in particular, see~\cite{FPSS24}*{Sect.~5}) enforcing the validity of (P.1), (P.2), (P.4), and (P.7), the latter ensuring the validity of \cref{rem:restriction_pos_comp_l1}. 
\end{remark}

\subsection{First \texorpdfstring{$1$}{1}-functional eigenvalue}\label{ssec:1_functional_eigenvalue}

We provide an analog of \cref{res:h=lambda_1} for the more general problem~\eqref{eq:H}.
We begin with the following definition, introducing our class of competitors, in the same spirit of~\cite{CL19}. 

\begin{definition}[$(1,N)$-function]
\label{def:1N-function}
We say that $\nupla u\in BV_0(F;\R^N)$ is a \emph{$(1,N)$-function} of $F\subset\R^d$ if $\nupla u_i\ge0$, $\|\nupla u_i\|_{L^1}=1$ and $\nupla u_i\,\nupla u_j=0$ a.e.\ in $F$ whenever $i\neq j$, for $i,j=1,\dots,N$.
\end{definition}

Note that any $N$-cluster $\nupla E$ of $F$ naturally induces a $(1,N)$-function $\nupla u^\nupla E$ of $F$, by letting 
\begin{equation}\label{eq:cluster_to_function}
\nupla u^\nupla E
=
\left(\frac{\chi_{\nupla E_1}}{|\nupla E_1|},\dots,\frac{\chi_{\nupla E_N}}{|\nupla E_N|}\right).
\end{equation}

The following definition was given in~\cite{CL19}*{eq.~(7)} for the special case $\Phi=\|\cdot\|_1$.

\begin{definition}[First $1$-functional $(\Phi,N)$-eigenvalue]\label{def:Lambda_1}
The \emph{first $1$-functional $(\Phi,N)$-ei\-gen\-va\-lue} of a set $F\subset\R^d$ is 
\begin{equation}
\label{eq:Lambda_1}    
\Lambda^{\Phi,N}_{1,1}(F)
=
\inf\set*{\Phi({[\nupla u]_{1,F}}) : \text{$\nupla u$ is a  $(1,N)$-function of $F$}}\in[0,\infty],
\end{equation}
where, for brevity, we have set
\begin{equation*}
{[\nupla u]_{1,F}}
=
\left(|D\nupla u_1|(\R^d),\dots, |D\nupla u_N|(\R^d)\right),
\end{equation*}
and, if no confusion can arise, we shall drop the reference to the ambient set $F$ and write $[\nupla u]_{1}$.
Any $(1,N)$-function $\nupla u$ of $F$ achieving the infimum is a \emph{$(1,\Phi)$-eigen-$N$-function} of $F$.
\end{definition} 

By~\eqref{eq:cluster_to_function}, given any set $F$ admitting an $N$-cluster, one has $\Lambda_{1,1}^{\Phi,N}(F)<\infty$, and also that $\Lambda_{1,1}^{\Phi,N}(F)\le H^{\Phi,N}(F)$. In particular, this holds  for any non-empty, bounded, open set~$\Omega$. 

\subsection{Existence of minimizers of \texorpdfstring{$\Lambda^{\Phi,N}_{1,1}(\Omega)$}{1-functional eigenvalue}}

Similarly to \cref{ssec:ex_cheeger_clus}, we show that there exist minimizers of the spectral-functional eigenvalue $\Lambda^{\Phi,N}_{1,1}(\Omega)$, up to assuming \ref{p:lsc}--\ref{p:m}. Once again \ref{p:c} plays the crucial role of yielding a uniform bound on the sequence of total variations of an infimizing sequence. This result generalizes~\cite{CL19}*{Th.~3.1}.

\begin{theorem}[Existence of minimizers of $\Lambda^{\Phi,N}_{1,1}(\Omega)$]
\label{res:existence_f}
Let \ref{p:lsc}, \ref{p:c}, and \ref{p:m} be in force.
Then, $(1,\Phi)$-eigen-$N$-functions of $\Omega$ exist.
\end{theorem}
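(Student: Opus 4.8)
The plan is to apply the direct method of the calculus of variations, following the scheme of \cref{res:existence} and generalizing \cite{CL19}*{Th.~3.1}. Since $\Lambda^{\Phi,N}_{1,1}(\Omega)\le H^{\Phi,N}(\Omega)<\infty$, one may pick an infimizing sequence $\set*{\nupla u^k : k\in\N}$ of $(1,N)$-functions of $\Omega$ with $\Phi([\nupla u^k]_1)\to\Lambda^{\Phi,N}_{1,1}(\Omega)$ as $k\to\infty$. The role of \ref{p:c}, exactly as in \cref{res:existence}, is to provide a uniform bound: for $k$ large,
\[
\sum_{i=1}^N |D\nupla u^k_i|(\R^d)=\|[\nupla u^k]_1\|_1\le\frac1\delta\,\Phi([\nupla u^k]_1)\le\frac1\delta\,\bigl(\Lambda^{\Phi,N}_{1,1}(\Omega)+1\bigr),
\]
with $\delta>0$ as in \ref{p:c}. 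Since $\nupla u^k_i\in BV_0(\Omega)$ with $\|\nupla u^k_i\|_{L^1}=1$, this bounds each $\nupla u^k_i$ in $BV(\R^d)$ uniformly in $k$.

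Next I would extract a converging subsequence. As $\Omega$ is bounded, the compact embedding recalled in \cref{rem:BV_emb} lets us pass to a (not relabelled) subsequence so that $\nupla u^k_i\to\nupla u_i$ in $L^1(\R^d)$ and a.e.\ in $\R^d$, for every $i=1,\dots,N$, for some functions $\nupla u_i$. Each $\nupla u_i$ is non-negative a.e., vanishes a.e.\ outside $\Omega$, and, by lower semicontinuity of the total variation, satisfies $|D\nupla u_i|(\R^d)<\infty$; hence $\nupla u_i\in BV_0(\Omega)$. Moreover $\|\nupla u_i\|_{L^1}=\lim_k\|\nupla u^k_i\|_{L^1}=1$ by $L^1$ convergence, and $\nupla u_i\,\nupla u_j=\lim_k \nupla u^k_i\,\nupla u^k_j=0$ a.e.\ in $\Omega$ for $i\ne j$ by a.e.\ convergence. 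Therefore $\nupla u=(\nupla u_1,\dots,\nupla u_N)$ is a $(1,N)$-function of $\Omega$ in the sense of \cref{def:1N-function}.

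It remains to check minimality. Being bounded in $\R^N$, the sequence $[\nupla u^k]_1$ converges, along a further subsequence, to some $\nupla v\in\ort$, and by lower semicontinuity of the total variation $[\nupla u]_1\le\nupla v$ componentwise. Invoking \ref{p:m} and then \ref{p:lsc} we conclude
\[
\Phi([\nupla u]_1)\le\Phi(\nupla v)\le\liminf_{k\to\infty}\Phi([\nupla u^k]_1)=\Lambda^{\Phi,N}_{1,1}(\Omega),
\]
so that $\nupla u$ is a $(1,\Phi)$-eigen-$N$-function of $\Omega$.

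I do not expect a serious obstacle: the argument is lighter than that of \cref{res:existence}, since $L^1$ convergence of an infimizing sequence automatically preserves the unit $L^1$ mass of each component, so there is no need to rule out a ``vanishing chamber'' via the isoperimetric inequality. The only points requiring a little care are the two passages to the limit: the disjointness condition $\nupla u_i\,\nupla u_j=0$, which survives precisely because $L^1$ convergence on a bounded set yields a.e.\ convergent subsequences, and the final inequality, where \ref{p:m} is genuinely needed to transfer the bound from $\nupla v$ to the possibly smaller vector $[\nupla u]_1$, as the total variation is only lower semicontinuous (and $\Phi$ only lower semicontinuous) under $L^1$ convergence.
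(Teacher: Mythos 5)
Your proposal is correct and follows essentially the same route as the paper: the direct method with \ref{p:c} giving a uniform total-variation bound, the compact embedding $BV_0(\Omega)\subset L^1(\Omega)$ to extract an $L^1$-convergent subsequence, and then lower semicontinuity of the total variation combined with \ref{p:m} and \ref{p:lsc} to pass to the limit in the energy. You merely spell out the details the paper leaves as ``easy to check'' (that the limit is a $(1,N)$-function) and make the final semicontinuity step slightly more explicit by extracting a convergent subsequence of the energy vectors, which is a harmless refinement of the same argument.
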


\begin{proof}
Let $\set*{\nupla u^k : k\in\N}$ be an infimizing sequence for $\Lambda_{1,1}^{\Phi,N}(\Omega)$ and let $\eps>0$.
By~\ref{p:c}, for all $k\in\N$ sufficiently large we have that
\begin{equation*}
\Lambda_{1,1}^{\Phi,N}(\Omega)+\eps 
\ge 
\Phi({[\nupla u^k]_{1}})
\ge 
\delta |D\nupla u^k_i|(\R^d),
\end{equation*}
where $\delta>0$ is as in \ref{p:c}.
Since $\Omega\subset\R^d$ is bounded, the embedding $BV_0(\Omega)\subset L^1(\Omega)$ is compact.
Thus, up to subsequences, $\nupla u^k_i\to\nupla u_i$ as $k\to\infty$ in $L^1(\Omega)$ for  $i=1,\dots,N$, for some  $\nupla u_i\in L^1(\Omega)$. 
It is easy to check that $\nupla u$ is a $(1,N)$-function of $\Omega$. Moreover, 
\begin{equation*}
\Phi({[\nupla u]_{1}})
\le 
\Phi\left(\liminf_{k\to\infty} 
{[\nupla u^k]_{1}}\right)
\le 
\liminf_{k\to\infty} 
\Phi\left({[\nupla u^k]_{1}}\right)
=\Lambda_{1,1}^{\Phi,N}(\Omega)
\end{equation*}
thanks to  the lower semicontinuity of the $BV$ seminorm, to~\ref{p:m}, and to~\ref{p:lsc}, readily yielding the conclusion.
\end{proof}

\begin{remark}[More general version of \cref{res:existence_f}]
\label{rem:existence_f}
Similarly to \cref{rem:existence_weak}, \cref{res:existence_f} holds under weaker assumptions on~$\Omega$.
In fact, it is enough to assume that $\Omega\subset\R^d$ is a bounded measurable set with $|\Omega|>0$ containing at least one viable competitor.
Note that the boundedness of $\Omega$ cannot be relaxed to $|\Omega|<\infty$, as this does not necessarily guarantee the compactness of the embedding $BV_0(\Omega)\subset L^1(\Omega)$.
For a more detailed discussion,  see~\cite{Maz11book}*{Sect.~9.1.7} which contains the sharp hypotheses to ensure the compactness of the embedding.
\end{remark}

\begin{remark}
In order to rephrase the content of this subsection in the abstract setting of~\cite{FPSS24}, we have, at least, to enforce properties (P.1), (P.2), and (P.4). Notice that the definition of $\Lambda^{\Phi, N}_{1,1}(\Omega)$ we are using here---that is, by considering only non-negative competitors---corresponds to the one appearing in~\cite{FPSS24}*{Rem.~5.9}. 
Enforcing (P.7) allows us to drop this restriction, thanks to \cref{rem:restriction_pos_comp_l1}.
Furthermore, in order to achieve \cref{res:existence_f}, we need to ensure the compactness of the embedding $BV_0(\Omega, \mathfrak{m})\subset L^1(\Omega, \mathfrak{m})$. 
Note that this holds in many of the frameworks discussed in~\cite{FPSS24}*{Sect.~7}. 
\end{remark}

\subsection{Relations with first \texorpdfstring{$1$}{1}-functional eigenvalue}\label{ssec:skeleton_CL}

In the following result we prove the equivalence of problems~\eqref{eq:lstort_1} and~\eqref{eq:Lambda_1} under the validity of \ref{p:m}.

\begin{theorem}[$\Lambda^{\Phi,N}_{1,1}(\Omega)=\lstort^{\Phi,N}_{1,1}(\Omega)$]
\label{res:lstort=Lambda_1}
The following holds
\begin{equation*}
\lstort^{\Phi,N}_{1,1}(\Omega) \ge \Lambda^{\Phi,N}_{1,1}(\Omega).
\end{equation*}
If \ref{p:m} is in force, then 
\begin{equation*}
\lstort^{\Phi,N}_{1,1}(\Omega)=\Lambda^{\Phi,N}_{1,1}(\Omega).
\end{equation*}
Moreover, if $\nupla u$ is a $(1,\Phi)$-eigen-$N$-function of $\Omega$, then 
\begin{equation}
\label{eq:function_to_set_1}
\nupla E
=
\big(\set*{\nupla u_1>0},\dots,\set*{\nupla u_N>0}\big)
\end{equation}
is a $(1,\Phi)$-eigen-$N$-set of $\Omega$. 
Viceversa, if $\E$ is a $(1,\Phi)$-eigen-$N$-set of $\Omega$, there exists a $(1,\Phi)$-eigen-$N$-function $\nupla u$ such that $\set*{\nupla u_i>0} \subset \E_i$ for all $i=1,\dots, N$.
\end{theorem}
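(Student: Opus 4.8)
The proof proceeds by a double inequality together with two explicit constructions relating $N$-sets to $(1,N)$-functions. First I would establish the easy inequality $\lstort^{\Phi,N}_{1,1}(\Omega) \ge \Lambda^{\Phi,N}_{1,1}(\Omega)$: given any $N$-set $\nupla E$ of $\Omega$, by \cref{res:clusters_only} we may assume it is an $N$-cluster, and then \cref{res:h=lambda_1} gives $h(\nupla E_i) = \lambda_{1,1}(\nupla E_i)$. Pick, for each $i$, a (non-negative) eigenfunction $\nupla u_i \in BV_0(\nupla E_i)$ with $\|\nupla u_i\|_{L^1}=1$ and $|D\nupla u_i|(\R^d) = \lambda_{1,1}(\nupla E_i)$; such a function exists by \cref{rem:ex_eigenfunctions_1} since each $\nupla E_i$ is bounded. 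Because the chambers are pairwise disjoint, the $\nupla u_i$ have pairwise disjoint supports, so $\nupla u = (\nupla u_1,\dots,\nupla u_N)$ is a $(1,N)$-function of $\Omega$ with $[\nupla u]_{1} = h(\nupla E)$, whence $\Lambda^{\Phi,N}_{1,1}(\Omega) \le \Phi(h(\nupla E))$. Taking the infimum over $N$-sets gives the inequality. In particular this already produces, from a $(1,\Phi)$-eigen-$N$-set $\nupla E$, the desired $(1,\Phi)$-eigen-$N$-function once we know the reverse inequality and \ref{p:m}.

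For the reverse inequality (under \ref{p:m}), the key point is the layer-cake / coarea argument as in~\cite{CL19}. Let $\nupla u$ be any $(1,N)$-function of $\Omega$. For each $i$, apply the coarea formula for $BV$ functions: $|D\nupla u_i|(\R^d) = \int_0^\infty \per(\{\nupla u_i > t\})\de t$, while $\|\nupla u_i\|_{L^1} = \int_0^\infty |\{\nupla u_i > t\}|\de t = 1$. A standard averaging argument then yields a threshold $t_i > 0$ with
\begin{equation*}
\frac{\per(\{\nupla u_i > t_i\})}{|\{\nupla u_i > t_i\}|} \le |D\nupla u_i|(\R^d),
\end{equation*}
so in particular $\{\nupla u_i > t_i\}$ has positive finite measure and finite perimeter. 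Since the supports of the $\nupla u_i$ are pairwise disjoint, so are the superlevel sets, and $\nupla F = (\{\nupla u_1 > t_1\},\dots,\{\nupla u_N > t_N\})$ is an $N$-cluster of $\Omega$ with $\per(\nupla F_i)/|\nupla F_i| \le [\nupla u]_{1,i}$ for every $i$. Using \ref{p:m} and then \cref{res:H=lstort} (or directly the definition via $h(\nupla F_i)\le \per(\nupla F_i)/|\nupla F_i|$), we get $\lstort^{\Phi,N}_{1,1}(\Omega) \le H^{\Phi,N}(\Omega) \le \Phi(\per(\nupla F)/|\nupla F|) \le \Phi([\nupla u]_{1})$, and taking the infimum over $(1,N)$-functions gives $\lstort^{\Phi,N}_{1,1}(\Omega) \le \Lambda^{\Phi,N}_{1,1}(\Omega)$.

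It remains to verify the two statements about minimizers. If $\nupla u$ is a $(1,\Phi)$-eigen-$N$-function, set $\nupla E_i = \{\nupla u_i > 0\}$. Each $\nupla E_i$ has positive measure (as $\|\nupla u_i\|_{L^1}=1$) and the $\nupla E_i$ are pairwise disjoint since $\nupla u_i\,\nupla u_j = 0$ a.e.; thus $\nupla E$ is an $N$-set of $\Omega$. Since $\nupla u_i \in BV_0(\nupla E_i)$ with unit $L^1$-norm, $\lambda_{1,1}(\nupla E_i) \le |D\nupla u_i|(\R^d) = [\nupla u]_{1,i}$, so by \ref{p:m} and \cref{res:lstort_1_N-set}, $\lstort^{\Phi,N}_{1,1}(\Omega) \le \Phi(\lambda_{1,1}(\nupla E)) \le \Phi([\nupla u]_{1}) = \Lambda^{\Phi,N}_{1,1}(\Omega) = \lstort^{\Phi,N}_{1,1}(\Omega)$, forcing equality; hence $\nupla E$ is a $(1,\Phi)$-eigen-$N$-set. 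Conversely, if $\nupla E$ is a $(1,\Phi)$-eigen-$N$-set, the construction in the first paragraph produces a $(1,N)$-function $\nupla u$ with $\supp\nupla u_i \subset \nupla E_i$ (so $\{\nupla u_i > 0\}\subset\nupla E_i$) and $[\nupla u]_{1} = h(\nupla E)$, whence $\Phi([\nupla u]_{1}) = \Phi(h(\nupla E)) = \lstort^{\Phi,N}_{1,1}(\Omega) = \Lambda^{\Phi,N}_{1,1}(\Omega)$, so $\nupla u$ is a $(1,\Phi)$-eigen-$N$-function. The main technical obstacle is the existence of a good threshold $t_i$ in the reverse inequality: one must be careful that the averaging over $t$ genuinely produces a level with positive measure (not just a.e.-trivial) and finite perimeter simultaneously; this is handled by noting $\int_0^\infty \per(\{\nupla u_i>t\})\de t < \infty$ and $\int_0^\infty |\{\nupla u_i>t\}|\de t = 1$, so the ratio of integrands cannot exceed $|D\nupla u_i|(\R^d)$ on a positive-measure set of $t$'s.
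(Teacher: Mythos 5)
Your argument is correct and shares the paper's overall skeleton: the easy inequality $\lstort^{\Phi,N}_{1,1}(\Omega)\ge\Lambda^{\Phi,N}_{1,1}(\Omega)$ via eigenfunctions on the chambers, and the two minimizer statements, are handled exactly as in the paper's proof. Where you genuinely diverge is the reverse inequality under \ref{p:m}: the paper simply observes that each $\nupla u_i$ is itself an admissible competitor for $\lambda_{1,1}(\set*{\nupla u_i>0})$, so the zero-superlevel $N$-set $\nupla E$ of \eqref{eq:function_to_set_1} satisfies $\lambda_{1,1}(\nupla E)\le[\nupla u]_1$, and concludes directly from \ref{p:m} and the variational description of $\lstort^{\Phi,N}_{1,1}$ in \cref{res:lstort_1_N-set}; you instead run a coarea/Cavalieri averaging to select thresholds $t_i>0$ whose superlevel sets form an $N$-cluster with $\per/|\cdot|$ ratios at most $[\nupla u]_1$, and then pass through $H^{\Phi,N}(\Omega)$, which additionally invokes \cref{res:H=lstort} (again legitimately, since \ref{p:m} is in force there). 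Your averaging step is sound — including the point you flag, that the bad-set contradiction only involves levels $t$ with $|\set*{\nupla u_i>t}|>0$, and that $|D\nupla u_i|(\R^d)>0$ — but it amounts to re-proving the half of \cref{res:h=lambda_1} that the paper outsources to the cited standard result, so it is heavier than needed: the zero-superlevel observation makes the threshold selection superfluous (the paper reserves that finer level-set analysis for \cref{res:H=Lambda} and \cref{res:choose_t_i}, where it is really needed). Likewise, your preliminary reduction to $N$-clusters via \cref{res:clusters_only} in the first inequality is harmless but unnecessary, since eigenfunctions exist on each bounded chamber with finite $\lambda_{1,1}$ directly by \cref{rem:ex_eigenfunctions_1}, which is all the paper uses; the trade-off of your route is that it stays closer to the purely geometric quantity $H^{\Phi,N}$, at the price of extra machinery.
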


\begin{proof}
Given $\eps>0$, we can find an $N$-set $\nupla E$ of $\Omega$ such that $\lstort^{\Phi,N}_{1,1}(\Omega)+\eps>\Phi(\lambda_{1,1}(\nupla E))$, with $\lambda_{1,1}(\nupla E)\in\ort$.
Since $\E_i$ is a subset of a bounded set $\Omega$, we get the existence of non-negative eigenfunctions of $\lambda_{1,1}(\E_i)$, as noted in \cref{rem:ex_eigenfunctions_1}.

For each $i\in\set*{1,\dots,N}$, we now let  $\nupla u_i\in BV_0(\nupla E_i)$ be such that $\nupla u_i\ge0$, $\|\nupla u_i\|_{L^1}=1$ and $|D\nupla u_i|(\R^d)=\lambda_{1,1}(\nupla E_i)$.
Hence $\nupla u=(\nupla u_1,\dots,\nupla u_N)$ is a $(1,N)$-function of $\Omega$ as in \cref{def:1N-function} such that $[\nupla u]_{1}=\lambda_{1,1}(\nupla E)$.
We thus get that $\lstort^{\Phi,N}_{1,1}(\Omega)+\eps>\Phi([\nupla u]_{1})\ge\Lambda^{\Phi,N}_{1,1}(\Omega)$.
The claim hence follows by letting $\eps\to 0$.

On the other hand, let $\nupla u$ be a $(1,N)$-function of $\Omega$.
Using \cref{def:1N-function}, it is easy to check that $\nupla E$ in~\eqref{eq:function_to_set_1} is an $N$-set of $\Omega$ as in \cref{def:cluster}.
Hence, recalling \cref{def:lambda_1}, in virtue of $\lambda_{1,1}(\nupla E)\le[\nupla u]_{1}$, we have that $\Phi(\lambda_{1,1}(\nupla E))\le\Phi([\nupla u]_{1})$ by \ref{p:m}, yielding that $\lstort_{1,1}^{\Phi,N}(\Omega)\le\Lambda_{1,1}^{\Phi,N}(\Omega)$.

For the second part of the statement, if $\nupla u$ is a $(1,\Phi)$-eigen-$N$-function of $\Omega$, then $\nupla E$ in~\eqref{eq:function_to_set_1} satisfies $\lambda_{1,1}(\nupla E)\le[\nupla u]_{1}$. 
By \ref{p:m}, it follows that 
\[
\lstort^{\Phi,N}_{1,1}(\Omega)\le\Phi(\lambda_{1,1}(\nupla E))\le\Phi([\nupla u]_{1})=\Lambda^{\Phi,N}_{1,1}(\Omega),
\]
yielding that $\nupla E$ is a $(1,\Phi)$-eigen-$N$-set of $\Omega$.

Now let $\nupla E$ be a $(1,\Phi)$-eigen-$N$-set. 
Hence, $\lambda_{1,1}(\nupla E)\in \ort$, that is, $\lambda_{1,1}(\nupla E_i)<\infty$ for all $i=1,\dots,N$. Since $\E_i\subset\Omega$ is a bounded set with $|\nupla E_i|>0$, by \cref{rem:restriction_pos_comp_l1,rem:ex_eigenfunctions_1}, for all $i=1,\dots, N$, there exists a function $\nupla u_i\in BV_0(\nupla E_i)$ such that $\|\nupla u_i\|_{L^1}=1$, $\nupla u_i\ge0$, and $\lambda_{1,1}(\nupla E_i) = |D\nupla u_i|(\R^d)$. Therefore, $\nupla u = (\nupla u_1, \dots, \nupla u_N)$ is a $(1,N)$-function of $\Omega$ such that
\[
\Lambda_{1,1}^{\Phi, N}(\Omega)
\le
\Phi([\nupla u]_1)
= 
\Phi(\lambda_{1,1}(\E)) 
= 
\lstort_{1,1}^{\Phi, N}(\Omega).
\]
From the first part of the statement it follows that $\nupla u$ is a $(1,\Phi)$-eigen-$N$-function of $\Omega$, and, by construction, $\set*{\nupla u_i>0}\subset \nupla E_i$ for all $i=1,\dots, N$.
\end{proof}

\begin{remark}
\cref{res:lstort=Lambda_1} yields that, up to possibly passing to a smaller $N$-subset, each chamber of a $(1,\Phi)$-eigen-$N$-set of $\Omega$ is the zero superlevel set of a $(1,\Phi)$-eigen-$N$-function of $\Omega$. 
Actually, if a chamber $\E_i$ is a Cheeger set of itself, then the set inclusion is an equality. In such a case, $\nupla u_i = \chi_{\E_i}/\|\chi_{\E_i}\|_{L^1}$ is a first eigenfunction of the $1$-Laplacian on $\E_i$ (e.g., see \cite{FPSS24}*{Cor.~5.5}). In particular, this happens for \emph{all chambers} whenever \ref{p:ms} holds, in virtue of \cref{res:4_1-adj} and of \cref{rem:1-adj_self}.
\end{remark}

We are ready to deal with the main result of this section, generalizing~\cite{CL19}*{Th.~3.3}: we shall prove that, assuming~\ref{p:m}, for a non-empty, bounded, and open set $\Omega$, the equality $\Lambda_{1,1}^{\Phi,N}(\Omega)=H^{\Phi,N}(\Omega)$ holds.
In fact, assuming the stronger \ref{p:ms}, minimizers of one problem are naturally related to minimizers of the other problem (if they exist).

\begin{theorem}[$\Lambda_{1,1}^{\Phi,N}(\Omega)=H^{\Phi,N}(\Omega)$]
\label{res:H=Lambda}
Let \ref{p:m} be in force. Then, 
\begin{equation}
\label{eq:H=Lambda}
\Lambda_{1,1}^{\Phi,N}(\Omega)=H^{\Phi,N}(\Omega).
\end{equation} 
Moreover, under the stronger \ref{p:ms}, any $(1,N)$-function $\nupla u$ is a $(1,\Phi)$-eigen-$N$-function of $\Omega$ if and only if, for a.e.\ $t_i > 0$ such that $|\set*{\nupla u_i > t_i}| > 0$, for $i = 1,\dots,N$,
\begin{equation}
\label{eq:function_to_cluster}
\nupla E
=
\big(\set*{\nupla u_1>t_1},\dots,\set*{\nupla u_N>t_N}\big)
\end{equation} 
is a $1$-adjusted $\Phi$-Cheeger $N$-cluster of $\Omega$. 
In particular, if $\nupla E$ is a $\Phi$-Cheeger $N$-cluster of $\Omega$, then $\nupla u$ in~\eqref{eq:cluster_to_function} 
is a $(1,\Phi)$-eigen-$N$-function of $\Omega$.
\end{theorem}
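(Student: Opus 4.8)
The plan is to obtain the identity~\eqref{eq:H=Lambda} by simply chaining previously established results, and then to prove the minimiser characterisation by means of the coarea formula. For the first part, since \ref{p:ms} implies \ref{p:m}, I would combine \cref{res:H=lstort} (giving $\lstort^{\Phi,N}_{1,1}(\Omega)=H^{\Phi,N}(\Omega)$ under \ref{p:m}) with \cref{res:lstort=Lambda_1} (giving $\lstort^{\Phi,N}_{1,1}(\Omega)=\Lambda^{\Phi,N}_{1,1}(\Omega)$ under \ref{p:m}) to conclude $\Lambda^{\Phi,N}_{1,1}(\Omega)=\lstort^{\Phi,N}_{1,1}(\Omega)=H^{\Phi,N}(\Omega)$, which in fact already holds under \ref{p:m}.

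For the forward implication of the characterisation, let $\nupla u$ be a $(1,\Phi)$-eigen-$N$-function and set $\Omega_i:=\set*{\nupla u_i>0}$. The ``Moreover'' part of \cref{res:lstort=Lambda_1} says that the $N$-set $\nupla E^0=(\Omega_1,\dots,\Omega_N)$ in~\eqref{eq:function_to_set_1} minimises $\lstort^{\Phi,N}_{1,1}(\Omega)$, so $\Phi(\lambda_{1,1}(\nupla E^0))=\lstort^{\Phi,N}_{1,1}(\Omega)=\Lambda^{\Phi,N}_{1,1}(\Omega)=\Phi([\nupla u]_1)$. Since each $\nupla u_i$ is an admissible competitor for $\lambda_{1,1}(\Omega_i)$ we have $\lambda_{1,1}(\nupla E^0)\le[\nupla u]_1$, and \ref{p:ms} forces componentwise equality; hence $\nupla u_i$ is a first $1$-eigenfunction of $\Omega_i$ and $\lambda_{1,1}(\Omega_i)=h(\Omega_i)$ by \cref{res:h=lambda_1}. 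Then I would run the scalar layer-cake argument on each $\Omega_i$: by the coarea formula and Cavalieri's principle $h(\Omega_i)=|D\nupla u_i|(\R^d)=\int_0^\infty\per(\set*{\nupla u_i>t})\de t$ and $\int_0^\infty|\set*{\nupla u_i>t}|\de t=\|\nupla u_i\|_{L^1}=1$, while $\per(\set*{\nupla u_i>t})\ge h(\Omega_i)\,|\set*{\nupla u_i>t}|$ for a.e.\ $t>0$ because $\set*{\nupla u_i>t}\subset\Omega_i$; comparison forces equality for a.e.\ $t$, so every nontrivial superlevel set $\set*{\nupla u_i>t}$ has finite perimeter with $\per(\set*{\nupla u_i>t})/|\set*{\nupla u_i>t}|=h(\Omega_i)=[\nupla u_i]_1$. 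A Fubini argument then shows that for a.e.\ admissible $(t_1,\dots,t_N)$ the cluster~\eqref{eq:function_to_cluster} satisfies $\per(\nupla E)/|\nupla E|=[\nupla u]_1$, hence $\Phi(\per(\nupla E)/|\nupla E|)=\Phi([\nupla u]_1)=\Lambda^{\Phi,N}_{1,1}(\Omega)=H^{\Phi,N}(\Omega)$ and $\nupla E$ is a $\Phi$-Cheeger $N$-cluster, which is $1$-adjusted by \cref{res:4_1-adj}.

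For the converse, assume that for a.e.\ admissible $(t_1,\dots,t_N)$ the cluster~\eqref{eq:function_to_cluster} is a $1$-adjusted $\Phi$-Cheeger $N$-cluster, and write $c_i(t):=\per(\set*{\nupla u_i>t})/|\set*{\nupla u_i>t}|$, finite for a.e.\ $t$ with $|\set*{\nupla u_i>t}|>0$. The hypothesis gives $\Phi(c_1(t_1),\dots,c_N(t_N))=H^{\Phi,N}(\Omega)$ for a.e.\ admissible $(t_1,\dots,t_N)$; freezing all but one coordinate (via Fubini) and using that \ref{p:ms} rules out equality of $\Phi$ on two comparable vectors differing in one component, I would deduce that each $c_i$ equals a constant $c_i$ a.e. Then coarea and Cavalieri give $[\nupla u_i]_1=\int_0^\infty\per(\set*{\nupla u_i>t})\de t=c_i\int_0^\infty|\set*{\nupla u_i>t}|\de t=c_i$, so $\Phi([\nupla u]_1)=\Phi(c_1,\dots,c_N)=H^{\Phi,N}(\Omega)=\Lambda^{\Phi,N}_{1,1}(\Omega)$ and $\nupla u$ is a minimiser. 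The final assertion is then immediate: if $\nupla E$ is a $\Phi$-Cheeger $N$-cluster then $\nupla u$ from~\eqref{eq:cluster_to_function} has $[\nupla u_i]_1=\per(\nupla E_i)/|\nupla E_i|$, so $\Phi([\nupla u]_1)=H^{\Phi,N}(\Omega)=\Lambda^{\Phi,N}_{1,1}(\Omega)$, whence $\nupla u$ is a $(1,\Phi)$-eigen-$N$-function (equivalently, every nontrivial superlevel set of this $\nupla u$ equals $\nupla E$, which is $1$-adjusted by \cref{res:4_1-adj}, so the converse applies).

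I expect the delicate point to be the rigidity step under \ref{p:ms}, namely extracting from the single identity $\Phi(c_1(t_1),\dots,c_N(t_N))\equiv H^{\Phi,N}(\Omega)$ that every $c_i$ is a.e.\ constant: this requires combining Fubini slicing with the impossibility of equality of $\Phi$ on comparable vectors, taking care that the co-null ``good'' slices of levels still meet the positive-measure level sets of each $c_i$. The remaining book-keeping of the null set of levels $(t_1,\dots,t_N)$ when passing between the coarea identities and the cluster~\eqref{eq:function_to_cluster} is routine.
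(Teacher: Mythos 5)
Your argument is correct and follows the same overall architecture as the paper's proof (part one by chaining \cref{res:H=lstort} and \cref{res:lstort=Lambda_1}; the forward implication via a level-set ratio identity plus \cref{res:4_1-adj}; the converse via a constancy-by-contradiction step and then coarea/Cavalieri), but it deviates in two local and worthwhile ways. First, where the paper invokes \cref{res:choose_t_i}, whose proof is only sketched by reduction to the one-component variation argument of Caroccia--Littig, you rederive that identity directly: the ``Moreover'' part of \cref{res:lstort=Lambda_1} plus \ref{p:ms} forces $\lambda_{1,1}(\set*{\nupla u_i>0})=|D\nupla u_i|(\R^d)$ componentwise, so each $\nupla u_i$ is a first $1$-eigenfunction of its support, and the layer-cake comparison $\per(\set*{\nupla u_i>t})\ge h(\set*{\nupla u_i>0})|\set*{\nupla u_i>t}|$ together with $\|\nupla u_i\|_{L^1}=1$ forces the ratio of a.e.\ superlevel set to equal $[\nupla u_i]_1$; this is a clean, self-contained substitute for the omitted details. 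Second, in the converse you compare the perimeter-to-volume ratio vectors of the two clusters directly (they are comparable and differ in one component), whereas the paper first converts them into Cheeger-constant vectors through the $1$-adjustedness identity~\eqref{eq:1-adj_self}; your variant shows the contradiction only needs the $\Phi$-Cheeger minimality of both clusters, not their $1$-adjustedness. The measure-theoretic bookkeeping you flag (choosing the levels from co-null good sets when freezing all but one coordinate) is indeed the only delicate point, and your treatment of it is at least as careful as the paper's own; the ``In particular'' statement is handled the same way in both proofs.
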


For the proof of the second part of the statement of \cref{res:H=Lambda}, we need the following result, which extends~\cite{CL19}*{Lem.~3.4}. 

\begin{lemma}
\label{res:choose_t_i}
Let~\ref{p:ms} be in force.
If $\nupla u$ is a $(1,\Phi)$-eigen-$N$-function of $\Omega$, then for almost every $t\in \R$ such that $|\set*{\nupla u_i>t}|>0$ one has
\begin{equation*}
\frac{\per(\set*{\nupla u_i>t})}{|\set*{\nupla u_i>t}|}
=
|D\nupla u_i|(\R^d)
\end{equation*}
for each $i=1,\dots,N$.
\end{lemma}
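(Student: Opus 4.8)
The plan is to exploit the coarea formula together with the minimality of $\nupla u$, following the strategy of~\cite{CL19}*{Lem.~3.4}. First I would set, for each $i=1,\dots,N$, the distribution function $m_i(t) = |\set*{\nupla u_i > t}|$ and the perimeter function $p_i(t) = \per(\set*{\nupla u_i > t})$. By the coarea formula for $BV$ functions, $|D\nupla u_i|(\R^d) = \int_0^\infty p_i(t)\de t$, while the layer-cake formula gives $1 = \|\nupla u_i\|_{L^1} = \int_0^\infty m_i(t)\de t$. The key inequality is the pointwise Cheeger bound $p_i(t) \ge h(\set*{\nupla u_i>t})\, m_i(t) \ge \lambda_{1,1}(\Omega)\, m_i(t)$ for a.e.\ $t$ with $m_i(t)>0$; more to the point, since each $\set*{\nupla u_i>t}$ is an admissible set for the respective Cheeger-type problem, one has $\frac{p_i(t)}{m_i(t)} \ge h(\set*{\nupla u_i>t}) = \lambda_{1,1}(\set*{\nupla u_i>t})$ for a.e.\ such $t$.

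Next I would argue by contradiction: suppose that on a set of $t$ of positive measure one has the strict inequality $\frac{p_i(t)}{m_i(t)} > \lambda_{1,1}(\set*{\nupla u_i>t})$ for some fixed $i$. One then wants to build a competitor $(1,N)$-function $\tilde{\nupla u}$ with the same chambers' supports (or smaller) but strictly smaller evaluation $\Phi$. The natural construction is to replace $\nupla u_i$ by a properly rescaled eigenfunction $v_i$ of $\lambda_{1,1}$ on some superlevel set $\set*{\nupla u_i>t}$ achieving a strictly lower value, i.e.\ $|Dv_i|(\R^d) < |D\nupla u_i|(\R^d)$ while $\|v_i\|_{L^1}=1$, $v_i\ge 0$, and $\supp v_i \subset \set*{\nupla u_i>t} \subset \set*{\nupla u_i>0}$, so the disjointness with the other chambers $\nupla u_j$, $j\ne i$, is preserved. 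Keeping the other components unchanged, strict monotonicity~\ref{p:ms} then forces $\Phi([\tilde{\nupla u}]_1) < \Phi([\nupla u]_1) = \Lambda^{\Phi,N}_{1,1}(\Omega)$, contradicting minimality. Hence for a.e.\ $t>0$ with $m_i(t)>0$ we must have $\frac{p_i(t)}{m_i(t)} = \lambda_{1,1}(\set*{\nupla u_i>t})$. Finally, to upgrade the equality ratio $= \lambda_{1,1}(\set*{\nupla u_i>t})$ to the equality ratio $= |D\nupla u_i|(\R^d)$ claimed in the statement, I would integrate: from $p_i(t) \ge \lambda_{1,1}(\set*{\nupla u_i>t})\,m_i(t) \ge c_i\, m_i(t)$ where $c_i := |D\nupla u_i|(\R^d)$ would follow (since each $\set*{\nupla u_i>t}$ is a viable competitor for $\lambda_{1,1}$ of the support, but one needs the reverse direction) — more precisely, the key quantitative fact is that $c_i = \int_0^\infty p_i(t)\de t \ge c_i \int_0^\infty m_i(t)\de t = c_i$ only if $p_i(t) = c_i\, m_i(t)$ for a.e.\ $t$; here $c_i$ must be identified with the essential infimum of $p_i(t)/m_i(t)$, which coincides with $\lambda_{1,1}$ of the support, and a standard argument (e.g.\ \cite{CL19}*{Lem.~3.4} or the classical fact that the Cheeger ratio is attained by a.e.\ superlevel set of an eigenfunction) closes the chain of equalities.

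The main obstacle I anticipate is the rigidity step: turning the integral identity $\int_0^\infty p_i(t)\de t = c_i \int_0^\infty m_i(t)\de t$ (with the a.e.\ pointwise inequality $p_i(t)\ge c_i\, m_i(t)$, where $c_i$ is the correct constant) into the pointwise equality for a.e.\ $t$. This requires first establishing the pointwise inequality with the \emph{sharp} constant $c_i = |D\nupla u_i|(\R^d)$ — not merely $\lambda_{1,1}(\Omega)$ — and that is exactly where the minimality of $\nupla u$ and~\ref{p:ms} enter: if $p_i(t)/m_i(t)$ dipped strictly below $c_i$ on a positive-measure set, the truncation/rescaling competitor above would produce a strictly better $(1,N)$-function. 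Care is needed to check that the rescaled truncated function still lies in $BV_0(\Omega)$ with the prescribed $L^1$-normalization and disjoint supports, and that the subadditivity/monotonicity manipulations with $\Phi$ are legitimate; these are the routine verifications I would leave to the reader, citing the $BV$ coarea formula, \cref{res:h=lambda_1}, \cref{rem:ex_eigenfunctions_1}, and \cref{res:lstort=Lambda_1}. One subtlety worth flagging — and presumably the reason the published statement needed the added hypothesis~\ref{p:ms} — is that without strict monotonicity the competitor argument only yields a \emph{non-strict} improvement, which is insufficient to force the pointwise equality on a full-measure set of levels.
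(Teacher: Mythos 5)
Your closing paragraph is, in substance, the paper's own proof (which is the argument of \cite{CL19}*{Lem.~3.4} adapted via \ref{p:ms}): strict monotonicity gives the component-wise minimality of each $\nupla u_i$ --- if $v\in BV_0(\set{\nupla u_i>0})$ with $v\ge 0$, $\|v\|_{L^1}=1$ and $|Dv|(\R^d)<|D\nupla u_i|(\R^d)$, then replacing $\nupla u_i$ by $v$ keeps the other components (and the disjointness of supports) and, by \ref{p:ms}, strictly decreases $\Phi([\nupla u]_1)$, contradicting minimality --- and then, for a.e.\ $t>0$ with $|\set{\nupla u_i>t}|>0$ (so that $\per(\set{\nupla u_i>t})<\infty$ by coarea), the competitor $\chi_{\set{\nupla u_i>t}}/|\set{\nupla u_i>t}|$ yields the sharp pointwise bound $\per(\set{\nupla u_i>t})/|\set{\nupla u_i>t}|\ge|D\nupla u_i|(\R^d)$; integrating this against the coarea formula and Cavalieri's principle forces equality for a.e.\ such $t$. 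So the mechanism you identify at the end (competitor swap plus \ref{p:ms}, then the integral rigidity) is correct and coincides with the paper's route.

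The middle of your proposal, however, does not work as written and should simply be removed. From the contradiction hypothesis $\per(\set{\nupla u_i>t})/|\set{\nupla u_i>t}|>\lambda_{1,1}(\set{\nupla u_i>t})$ you cannot infer that the rescaled eigenfunction $v_i$ of $\set{\nupla u_i>t}$ satisfies $|Dv_i|(\R^d)<|D\nupla u_i|(\R^d)$: at that stage there is no comparison between $\lambda_{1,1}(\set{\nupla u_i>t})$ and $|D\nupla u_i|(\R^d)$, and a posteriori the two coincide for a.e.\ $t$, so this strict inequality can never produce a better competitor --- the ``i.e.''\ in that sentence is a non sequitur. Likewise, the inequality $\lambda_{1,1}(\set{\nupla u_i>t})\ge c_i$ is not justified by saying that $\set{\nupla u_i>t}$ is a competitor for $\lambda_{1,1}$ of the support: that observation gives $\lambda_{1,1}(\set{\nupla u_i>0})\le \per(\set{\nupla u_i>t})/|\set{\nupla u_i>t}|$, i.e.\ the opposite direction, as you yourself flag. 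The inequality is in fact true, but only because every competitor for $\lambda_{1,1}(\set{\nupla u_i>t})$ is admissible in the component-wise problem solved by $\nupla u_i$ --- it is the component-wise minimality again --- and it is not needed, since the characteristic-function competitor already provides the pointwise bound with the correct constant $c_i=|D\nupla u_i|(\R^d)$ that the rigidity step requires; there is no need to identify $c_i$ with an essential infimum. In short: drop the detour through $\lambda_{1,1}(\set{\nupla u_i>t})$, keep the argument of your last paragraph (with the routine checks that the replacement is a nonnegative, $L^1$-normalized $BV_0$ function supported in $\set{\nupla u_i>t}\subset\set{\nupla u_i>0}$, cf.\ \cref{res:h=lambda_1,rem:ex_eigenfunctions_1}), and you recover exactly the proof the paper refers to.
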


\begin{proof}
By~\ref{p:ms}, if $j\in\set*{1,\dots,N}$ and $\nupla u,\bar{\nupla u}\in BV(\Omega;\R^N)$ are such that $\Phi({[\nupla u]_{1}})\le\Phi({[\bar{\nupla u}]_{1}})$ and $\nupla u_i=\bar{\nupla u}_i$ for  $i\in\set*{1,\dots,N}\setminus\set*{j}$, then $|D\nupla u_j|(\R^d)\le|D\bar{\nupla u}_j|(\R^d)$, as in~\cite{CL19}*{eq.~(10)}. 
Hence the proof is similar to that of~\cite{CL19}*{Lem.~3.4}.
We omit the details.
\end{proof}

\begin{proof}[Proof of \cref{res:H=Lambda}]
The equality $\Lambda_{1,1}^{\Phi,N}(\Omega)=H^{\Phi,N}(\Omega)$ immediately follows by combining \cref{res:H=lstort} and \cref{res:lstort=Lambda_1}.
We can hence deal with the second part of the statement, assuming \ref{p:ms}. 
We argue as in the proof of~\cite{CL19}*{Th.~3.3}. 

On the one hand, let $\nupla u$ be a $(1,\Phi)$-eigen-$N$-function of $\Omega$ and let $t_i>0$ be such that  \cref{res:choose_t_i} applies to each $i=1,\dots,N$. 
Therefore, we have
\begin{equation*}
\Lambda_{1,1}^{\Phi,N}(\Omega)
=
\Phi({[\nupla u]_{1}})
=
\Phi\left({\frac{\per(\nupla E)}{|\nupla E|}}\right)
\ge
H^{\Phi,N}(\Omega),
\end{equation*}
being $\nupla E$ the $N$-cluster in~\eqref{eq:function_to_cluster}. 
Hence, in virtue of~\eqref{eq:H=Lambda}, $\nupla E$ is a $\Phi$-Cheeger $N$-cluster of $\Omega$, which is $1$-adjusted thanks to \cref{res:4_1-adj}.

On the other hand, let $\nupla u$ be a $(1,N)$-function of $\Omega$ such that, for almost every $t_i>0$ with $|\{\nupla u_i>t_i\}|>0$, $\nupla E$ as in~\eqref{eq:function_to_cluster} is a $1$-adjusted $\Phi$-Cheeger $N$-cluster of $\Omega$. 
Now let $t_2,\dots,t_N$ be such that $|\set*{\nupla u_i>t_i}|>0$ for $i=2,\dots,N$, and set $\mathcal T=\set*{t>0: |\set*{u_1>t}|>0}$. 
We claim that  
\begin{equation}
\label{eq:trespolo}
t\mapsto\frac{\per(\set*{\nupla u_1>t})}{|\set*{\nupla u_1>t}|}
\quad
\text{is constant for}\ t\in\mathcal T.
\end{equation}
By contradiction, if this is not the case, we can find $t_1,\tau_1\in\mathcal T$, $t_1\ne\tau_1$, such that 
\begin{equation}\label{eq:pappagallo}
\frac{\per(\set*{\nupla u_1>t_1})}{|\set*{\nupla u_1>t_1}|} 
< 
\frac{\per(\set*{\nupla u_1>\tau_1})}{|\set*{\nupla u_1>\tau_1}|}.
\end{equation}
Accordingly to our hypotheses, also 
\begin{equation*}
\tilde{\nupla E} 
= 
(\set*{\nupla u_1>\tau_1}, \nupla E_2, \dots,\nupla E_N)
\end{equation*}
is a $1$-adjusted $\Phi$-Cheeger $N$-cluster of $\Omega$. Therefore, also owing to~\eqref{eq:1-adj_self}, we have
\begin{equation}\label{eq:contradiction}
\Phi({h(\nupla E)})
=
\Phi\left({\frac{\per(\nupla E)}{|\nupla E|}}\right)
=
H^{\Phi,N}(\Omega) 
=
\Phi\left({\frac{\per(\tilde{\nupla E})}{|\tilde{\nupla E}|}}\right)
=
\Phi({h(\tilde{\nupla E})}).
\end{equation}
Nevertheless, by~\eqref{eq:pappagallo}, we must have that
\begin{equation*}
h(\set*{\nupla u_1>t_1})
=
\frac{\per(\set*{\nupla u_1>t_1})}{|\set*{\nupla u_1>t_1}|} 
< 
\frac{\per(\set*{\nupla u_1>\tau_1})}{|\set*{\nupla u_1>\tau_1}|} 
= 
h(\set*{\nupla u_1>\tau_1}),
\end{equation*}
yielding ${h(\nupla E)} <{h(\tilde{\nupla E})}$.
By~\ref{p:ms}, it must be $\Phi({h(\nupla E)}) < \Phi({h(\tilde{\nupla E})})$, contradicting~\eqref{eq:contradiction}.
This concludes the proof of the claimed~\eqref{eq:trespolo}. 

Therefore, there exists $h_1>0$ (as a consequence of the isoperimetric inequality and of the fact that $|\set*{\nupla u_1>t}|>0$) such that
\begin{equation*}
\frac{\per(\set*{\nupla u_1>t})}{|\set*{\nupla u_1>t}|} = h_1
\quad 
\text{for all}\ t\in\mathcal T.
\end{equation*}
Reasoning analogously for each $i\in\set*{2,\dots,N}$, we find constants $h_i>0$ such that
\begin{equation*}
\frac{\per(\set*{\nupla u_i>t})}{|\set*{\nupla u_i>t}|} = h_i
\quad 
\text{for all}\ t>0\ \text{such that}\ 
|\set*{\nupla u_i>t}|>0.
\end{equation*}
Recalling that, by definition of $(1,N)$-function $\nupla u_i\ge 0$, and owing to the coarea formula, the above equalities, Cavalieri's principle (recalling that $\|\nupla u_i\|_{L^1}=1$), the validity of the equality $H^{\Phi,N}(\Omega)=\Phi(h_1,\dots,h_N)$, and~\eqref{eq:H=Lambda}, we have
\begin{align*}
\Lambda_{1,1}^{\Phi,N}(\Omega) 
&\le 
\Phi({[\nupla u]_{1}}) 
= 
\Phi\left(\int_0^{\infty} \per(\set*{\nupla u_1>t})\de t,\dots,\int_0^{\infty} \per(\set*{\nupla u_N>t})\de t\right)
\\
&= 
\Phi\left(h_1 \int_0^{\infty}|\set*{\nupla u_1>t}|\de t,\dots,h_N \int_0^{\infty}|\set*{\nupla u_N>t}|\de t\right)
\\
&=
\Phi\left(h_1,\dots,h_N\right) 
= 
H^{\Phi,N}(\Omega) 
= 
\Lambda_{1,1}^{\Phi,N}(\Omega),
\end{align*}
yielding that $\nupla u$ is a $(1,\Phi)$-eigen-$N$-function of $\Omega$.
The proof is complete.
\end{proof}

\begin{remark}
\label{rem:equality_1}
The equalities $\Lambda_{1,1}^{\Phi,N}(\Omega)=\lstort_{1,1}^{\Phi,N}(\Omega)=H^{\Phi,N}(\Omega)$ can be achieved within the abstract setting of~\cite{FPSS24}, enforcing properties (P.1), (P.2), (P.4), and also (P.7), which ensures the validity of \cref{rem:restriction_pos_comp_l1}.
A part of the argument can already be found in the proof~\cite{FPSS24}*{Th.~5.4}.
\end{remark}

\subsection{Boundedness of functional minimizers}\label{ssec:boundedness_CL}

We end this section with the following result, generalizing the classical $L^\infty$ bound for minimizers of~\eqref{eq:lambda_1}, on a non-empty, bounded, and open set $\Omega$, see~\cite{CC07}*{Th.~4}.

\begin{proposition}
\label{res:bounded_eigenf}
Let \ref{p:c} and \ref{p:ms} be in force.
If $\nupla u$ is a $(1,\Phi)$-eigen-$N$-function of $\Omega\subset \R^d$, then $\nupla u\in L^\infty(\Omega;\R^N)$, with
\begin{equation*}
\|\nupla u_i\|_{L^\infty}
\le 
\frac{1}{|B_1|}\left(\frac{H^{\Phi,N}(\Omega)}{\delta d}\right)^d
\quad
\text{for}\
i=1,\dots,N,
\end{equation*}
where $\delta>0$ is as in~\ref{p:c}.
\end{proposition}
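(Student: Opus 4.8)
The plan is to adapt the classical De Giorgi--Stampacchia truncation argument for eigenfunctions of the $1$-Laplacian (as in~\cite{CC07}*{Th.~4}) to the vector-valued, $\Phi$-evaluated setting, exploiting the fact that, under \ref{p:ms}, the coordinate $u_i$ can be varied independently of the others (cf.\ the observation recorded in the proof of \cref{res:choose_t_i}). Concretely, I would fix an index $i$, work only on the $i$-th component $\nupla u_i$, and show that $\nupla u_i$ cannot take values above the stated threshold by building, from $\nupla u$, a competing $(1,N)$-function with a strictly smaller $i$-th total variation but the same other components, which then violates \cref{res:choose_t_i} (equivalently, the minimality of $\nupla u$).

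First I would record the pointwise picture: by \cref{res:choose_t_i} (whose hypothesis \ref{p:ms} is in force), for a.e.\ $t>0$ with $|\set*{\nupla u_i>t}|>0$ we have $\per(\set*{\nupla u_i>t})/|\set*{\nupla u_i>t}| = |D\nupla u_i|(\R^d) =: \lambda_i$, and by \ref{p:c} together with \eqref{eq:change_from_BP}-type estimates $\lambda_i = h(\set*{\nupla u_i>t}) \le \|h(\nupla E)\|_1 \le \Phi(h(\nupla E))/\delta = H^{\Phi,N}(\Omega)/\delta$, where $\nupla E$ is the cluster of superlevel sets; actually I only need $\lambda_i \le H^{\Phi,N}(\Omega)/\delta$, which follows from \ref{p:c} applied to $[\nupla u]_1$ directly. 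Next, for $t>0$ set $m(t)=|\set*{\nupla u_i>t}|$; this is nonincreasing and, by the coarea formula, $-m'(t) \ge$ (something controllable) while $\per(\set*{\nupla u_i>t}) = \lambda_i\, m(t)$ for a.e.\ such $t$. Combining the isoperimetric inequality $\per(\set*{\nupla u_i>t}) \ge d|B_1|^{1/d} m(t)^{(d-1)/d}$ with the identity $\per(\set*{\nupla u_i>t}) = \lambda_i m(t)$ gives $m(t)^{-1/d} \ge d|B_1|^{1/d}/\lambda_i$, i.e.\ a \emph{uniform lower bound} $m(t) \le |B_1|(\lambda_i/(d))^d \cdot$ wait --- rather, $m(t) \le |B_1|\,(\lambda_i/d)^{d}$ is the wrong direction; the isoperimetric inequality gives $\lambda_i m(t) \ge d|B_1|^{1/d} m(t)^{(d-1)/d}$, hence $m(t)^{1/d}\ge d|B_1|^{1/d}/\lambda_i$, i.e.\ $m(t)\ge |B_1|(d/\lambda_i)^d$ for \emph{every} $t$ with $m(t)>0$. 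Since $m$ is nonincreasing and $\|\nupla u_i\|_{L^1}=1$ forces $m$ to be integrable with $\int_0^\infty m(t)\,dt = 1$, the set $\set*{t>0 : m(t)>0}$ has measure at most $1/\inf\set*{m(t): m(t)>0} \le \lambda_i^d/(|B_1|d^d)$; that is, $\nupla u_i \le \lambda_i^d/(|B_1|d^d)$ a.e. Finally, bounding $\lambda_i \le H^{\Phi,N}(\Omega)/\delta$ yields $\|\nupla u_i\|_{L^\infty} \le |B_1|^{-1}(H^{\Phi,N}(\Omega)/(\delta d))^d$, which is exactly the claim.

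The genuinely delicate point is the assertion that $\per(\set*{\nupla u_i>t}) = \lambda_i\,|\set*{\nupla u_i>t}|$ holds for a.e.\ $t$ in the \emph{full} range $\set*{t>0: m(t)>0}$, not merely for $t$ in some positive-measure subset: this is precisely the content of \cref{res:choose_t_i}, which in turn rests on the independent-variation property guaranteed by \ref{p:ms} --- so I would be careful to invoke \cref{res:choose_t_i} rather than re-deriving it, and to note that \ref{p:c} is what converts the resulting $\lambda_i$ into the explicit constant $H^{\Phi,N}(\Omega)/\delta$. A secondary subtlety is that the argument genuinely uses $\|\nupla u_i\|_{L^1}=1$ (Cavalieri) together with the uniform lower bound on superlevel volumes; I would state this as the short measure-theoretic lemma "a nonincreasing $m\colon(0,\infty)\to[0,\infty)$ with $\int_0^\infty m = 1$ and $m\ge c$ on $\set*{m>0}$ satisfies $|\set*{m>0}|\le 1/c$", which makes the $L^\infty$ bound immediate. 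No compactness or regularity input beyond \cref{res:choose_t_i} and the isoperimetric inequality is needed, so the proof is short; the only care required is in the quantification over levels.
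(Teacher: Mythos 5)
Your argument is correct and is essentially the paper's proof: both rest on the fact that, for a.e.\ level $t$, the superlevel set $\set*{\nupla u_i>t}$ has isoperimetric ratio controlled by $H^{\Phi,N}(\Omega)/\delta$ (via \ref{p:c}), then extract the uniform volume lower bound $|\set*{\nupla u_i>t}|\ge |B_1|\left(\delta d/H^{\Phi,N}(\Omega)\right)^d$ from the isoperimetric inequality, and conclude by Cavalieri's principle using $\|\nupla u_i\|_{L^1}=1$. The only difference is packaging: the paper obtains the volume bound by citing the second part of \cref{res:H=Lambda} together with \cref{res:props_cheeger}\ref{item:prop_cheeger_meas}, whereas you invoke \cref{res:choose_t_i} directly and re-derive the same bound inline, which is the same underlying argument.
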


\begin{proof}
Fix $j\in\set*{1,\dots,N}$. By the second part of \cref{res:H=Lambda}, we know that 
\begin{equation*}
\nupla E^t
=\big(\set*{\nupla u_1>t_1},\dots,\set*{\nupla u_j>t},\dots,\set*{\nupla u_N>t_N}\big)
\end{equation*}
is a $1$-adjusted $\Phi$-Cheeger $N$-cluster of $\Omega$ for a.e.\ $t_i\in[0,\|\nupla u_i\|_{L^\infty})$, for $i\in\set*{1,\dots,N}\setminus\set*{j}$, and for a.e.\ $t\in[0,\|\nupla u_j\|_{L^\infty})$. 
Hence, fixing any such $t_i\in[0,\|\nupla u_i\|_{L^\infty})$, for $i\in\set*{1,\dots,N}\setminus\set*{j}$,  by~\eqref{eq:prop_cheeger_meas} in \cref{res:props_cheeger}\ref{item:prop_cheeger_meas}, we can estimate
\begin{equation*}
|\set*{\nupla u_j>t}|
=
|\nupla E^t_j|
\ge 
|B_1| \left(\frac{\delta d}{H^{\Phi,N}(\Omega)}\right)^d
\quad 
\text{for a.e.}\ 
t\in[0,\|\nupla u_j\|_{L^\infty}),
\end{equation*}
where $\delta>0$ is as in \ref{p:c}.
Since $\|\nupla u_j\|_{L^1}=1$ the conclusion readily follows by integrating the above inequality and using Cavalieri's principle.
\end{proof}

\begin{remark}
\cref{res:bounded_eigenf} may be achieved in several other settings, in the spirit of~\cite{FPSS24}, at least enforcing properties (P.1), (P.2), (P.4), and (P.7) of~\cite{FPSS24}*{Sect.~2.1}, and by requiring a finer version of the \emph{isoperimetric property} (P.6) of~\cite{FPSS24}*{Sect.~2.1}, e.g., see~\cite{FPSS24}*{Prop.~7.2} in the context of metric-measure spaces and the discussion in~\cite{FPSS24}*{Sect.~7.3} for non-local perimeter functionals.
In fact, \cref{res:bounded_eigenf} was inspired by the corresponding results in the non-local framework, e.g., see~\cite{BLP14}*{Rem.~7.3} and~\cite{BS22}*{Cor.~3.11}. 
\end{remark}

\section{Relation with the spectral problem}
\label{sec:bobkov-parini}

Just as we defined the $1$-geometric and the $1$-functional eigenvalues $\lstort^{\Phi, N}_{1,1}(F)$ and $\Lambda^{\Phi, N}_{1,1}(F)$ as variational problems set on $BV_0(F)$, in this section we treat their counterparts defined on $W^{1,p}_0(F)$. In particular, we show that the $(\Phi,N)$-Cheeger constant $H^{\Phi,N}(\Omega)$ of a non-empty, bounded, and open set $\Omega$ can be recovered as their limits as $p\to 1^+$, under suitable assumptions on the reference function $\Phi$. To do so, we adapt~\cite{BP18}*{Sect.~5}, where the authors deal with $\Phi=\|\cdot\|_\infty$.

\subsection{\texorpdfstring{$W^{1,p}_0$}{Sobolev} space and lower bound on \texorpdfstring{$\lambda_{1,p}$}{the first p-eigenvalue}}

Throughout this section, we let $p\in(1,\infty)$.
We begin with the following definition, in the same spirit of \cref{def:BV_0}. 

\begin{definition}[$W^{1,p}_0$ space]
\label{def:W_0}
Given a set $F\subset\R^d$, we let 
\begin{equation}
\label{eq:W_0}
W^{1,p}_0(F)
=
\set*{u\in W^{1,p}(\R^d) : u=0\ \text{a.e.\ on}\ \R^d\setminus F}, 
\end{equation}
and we let $\nupla u\in W^{1,p}_0(F;\R^N)$ if $\nupla u_i\in W^{1,p}_0(F)$ for $i=1,\dots,N$.
\end{definition}

\begin{remark}\label{rem:Sobolev_emb}
As similarly observed in \cref{rem:BV_emb} for the $BV$ space introduced in~\eqref{eq:BV_0},
 $W^{1,p}_0(F)$ may not coincide with the space of $W^{1,p}$ functions on $F$ with null trace at the boundary, unless $\partial F$ is sufficiently regular. Nevertheless, the usual Sobolev embeddings hold on a bounded $F$, as $W^{1,p}_0(F)\subset W^{1,p}_0(B_R)$ with $R>0$ such that $F \Subset B_R$. 
\end{remark}

\begin{remark}
We remark that in~\cite{BP18}, which we are extending, the authors consider a slightly different notion of Sobolev space. Fixed a non-empty, bounded, and open set $\Omega$, they define the Sobolev space $\widetilde W^{1,p}_0(F)$ for $F\subset\Omega$ as
\begin{equation}
\label{eq:W_0_tilde}
\widetilde W^{1,p}_0(F)
=
\set*{u\in W^{1,p}_0(\Omega) : u=0\ \text{a.e.\ on}\ \Omega\setminus F}.
\end{equation}
We stress that, if $\Omega$ is chosen Lipschitz, then the spaces in~\eqref{eq:W_0} and in~\eqref{eq:W_0_tilde} coincide.
\end{remark}

In analogy with~\eqref{eq:lambda_1}, we can introduce the following definition.  

\begin{definition}[First $p$-eigenvalue]
The \emph{first $p$-eigenvalue} of a set $F\subset\R^d$ is 
\begin{equation}
\label{eq:lambda_1,p}
\lambda_{1,p}(F)
=
\inf\set*{\int_F|\nabla u|^p\de x : u\in W^{1,p}_0(F),\ \|u\|_{L^p}=1}
\in[0,\infty].
\end{equation}
\end{definition}

\begin{remark}[Non-negative competitors]
\label{rem:restriction_pos_comp_lp}
Similarly to \cref{rem:restriction_pos_comp_l1}, the competitors in problem~\eqref{eq:lambda_1,p} can be taken non-negative, thanks to the chain rule for Sobolev functions.
\end{remark}

\begin{remark}\label{rem:ex_eigenfunctions}
As similarly observed in \cref{rem:ex_eigenfunctions_1}, by its very definition, $W_0^{1,p}(F)\ne\set*{0}$ if and only if $\lambda_{1,p}(F)<\infty$. If, in addition, $F$ is bounded, then there exist eigenfunctions, that is, functions $u\in W_0^{1,p}(F)$ realizing the infimum in~\eqref{eq:W_0}. To see this, it is enough to take an infimizing sequence (of non-negative competitors without loss of generality, in virtue of \cref{rem:restriction_pos_comp_lp}), to use the compact embeddings (refer to \cref{rem:Sobolev_emb}), and to exploit the lower semicontinuity of the Sobolev seminorm. 
In particular, this holds true for any non-empty, bounded, and open set $\Omega$. Moreover, if  $\partial\Omega$ is sufficiently regular, then $\lambda_{1,p}(\Omega)$ is the usual first eigenvalue of the Dirichlet $p$-Laplacian on~$\Omega$. Nevertheless, we warn the reader that, if $\Omega$ is not regular, then $\lambda_{1,p}(\Omega)$ might be smaller than the usual first eigenvalue of the Dirichlet $p$-Laplacian, see, e.g.,~\cite{Lin93}.
\end{remark}

The following result rephrases~\cite{LW97}*{App.}, see also~\cite{KF03}*{Th.~3} and~\cite{FPSS24}*{Cor.~6.4}.
We provide a sketch of its proof for the reader's convenience. 

\begin{theorem}[Lower bound on $\lambda_{1,p}(F)$]
\label{res:cheeger_p}
Given a set $F\subset\R^d$, it holds
\begin{equation*}
\lambda_{1,p}(F)
\ge
\left(\frac{h(F)}{p}\right)^p.
\end{equation*}
\end{theorem}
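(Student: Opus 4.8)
The plan is to establish the pointwise inequality $\|\nabla u\|_{L^p}^p\ge (h(F)/p)^p$ for every admissible competitor $u$ in~\eqref{eq:lambda_1,p}, and then pass to the infimum. By \cref{rem:restriction_pos_comp_lp} we may assume $u\ge 0$; and if $W^{1,p}_0(F)=\set*{0}$ there is nothing to prove, since then $\lambda_{1,p}(F)=+\infty$ by convention. So fix $u\in W^{1,p}_0(F)$ with $u\ge 0$ and $\|u\|_{L^p}=1$, and set $w=u^p$.

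First I would check that $w$ is an admissible object on which to run the Cheeger inequality. By the chain rule for Sobolev functions (using $u^{p-1}\in L^{p/(p-1)}$ to ensure $p\,u^{p-1}\nabla u\in L^1$), one has $w\in W^{1,1}(\R^d)$ with $\nabla w=p\,u^{p-1}\nabla u$ a.e., and since $w=0$ a.e.\ on $\R^d\setminus F$ this means $w\in BV_0(F)$ in the sense of \cref{def:BV_0}. Hölder's inequality with exponents $p$ and $p'=p/(p-1)$, together with $(p-1)p'=p$ and $\|u\|_{L^p}=1$, then gives
\begin{equation*}
|Dw|(\R^d)=\int_F|\nabla w|\de x=p\int_F u^{p-1}\,|\nabla u|\de x\le p\,\|\nabla u\|_{L^p}.
\end{equation*}

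Next I would bound $|Dw|(\R^d)$ from below via the coarea formula and the very definition of $h(F)$. Since $w\ge 0$ and $\int_{\R^d}w\de x=1$, for a.e.\ $t>0$ the superlevel set $\set*{w>t}$ satisfies $\set*{w>t}\subset F$, $0<|\set*{w>t}|\le 1/t<\infty$, and $\per(\set*{w>t})<\infty$, so it is a viable competitor in \cref{def:h} and $\per(\set*{w>t})\ge h(F)\,|\set*{w>t}|$. Integrating over $t$ via the coarea formula and Cavalieri's principle yields
\begin{equation*}
|Dw|(\R^d)=\int_0^\infty\per(\set*{w>t})\de t\ge h(F)\int_0^\infty|\set*{w>t}|\de t=h(F)\int_{\R^d}w\de x=h(F).
\end{equation*}
(In passing this shows $h(F)<\infty$ whenever $W^{1,p}_0(F)\ne\set*{0}$, which also disposes of the degenerate case $h(F)=+\infty$.) Combining the two displays gives $h(F)\le p\,\|\nabla u\|_{L^p}$, i.e.\ $\int_F|\nabla u|^p\de x\ge (h(F)/p)^p$; taking the infimum over admissible $u$ concludes.

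The argument is classical and I do not foresee a genuine obstacle; the only steps needing (routine) care are the verification that $w=u^p\in BV_0(F)$ --- which rests on the chain rule together with the integrability of $u^{p-1}$ --- and the fact that the superlevel sets of $w$ are, for a.e.\ level, legitimate competitors for $h(F)$, which is guaranteed by the coarea formula and $w\in L^1$.
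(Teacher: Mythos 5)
Your proof is correct and follows essentially the same route as the paper's: the substitution $w=u^p$ (the paper uses $v=|u|^{p-1}u$, which avoids the preliminary reduction to $u\ge0$) together with the chain rule and H\"older's inequality with exponents $p$ and $p/(p-1)$, giving $h(F)\le p\,\|\nabla u\|_{L^p}$ for every competitor. The only difference is that where the paper simply invokes \cref{res:h=lambda_1} to obtain $h(F)=\lambda_{1,1}(F)\le|Dw|(\R^d)$, you re-derive that lower bound by hand via the coarea formula and Cavalieri's principle, which is just a self-contained inlining of the easy direction of that cited result (and your minor claim that $|\set*{w>t}|>0$ for a.e.\ $t>0$ is harmless, since for levels above the essential supremum both sides of the comparison vanish).
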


\begin{proof}
Assuming $\lambda_{1,p}(F)<\infty$ without loss of generality, we can find $u\in W^{1,p}_0(F)$ with $\|u\|_{L^p}=1$.
A simple application of the chain rule yields that $v=|u|^{p-1}u\in BV_0(F)$ with $\|v\|_{L^1}=1$ and $|Dv|=p|u|^{p-1}|\nabla u|\mathscr L^d$.
Consequently, by \cref{res:h=lambda_1} and H\"older's inequality, we get that 
\begin{equation*}
h(F)
=
\lambda_{1,1}(F)
\le 
|Dv|(\R^d)
\le 
p\int_{\R^d}|u|^{p-1}|\nabla u|\de x
\le 
p\,\|\nabla u\|_{L^p},
\end{equation*}
so that $\|\nabla u\|_{L^p}^p\ge(h(F)/p)^p$, and the conclusion readily follows.
\end{proof}

\begin{remark}
The content of this subsection can be rephrased in the abstract setting of~\cite{FPSS24}, once a proper notion of Sobolev space is introduced.
We refer the reader to~\cite{FPSS24}*{Sects.~2.3.3 and 6.1}. 
We also stress that, in metric-measure spaces, one can rely on a plainer approach, see the discussion in~\cite{FPSS24}*{Sect.~7.1}.
\end{remark}

\subsection{First \texorpdfstring{$p$}{p}-geometric and \texorpdfstring{$p$}{p}-functional eigenvalues}

We introduce the following definition, in the spirit of the one given in~\cite{BP18}*{Sect.~5}, extending our \cref{def:lstort_1} to also cover the case $p>1$.

\begin{definition}[First $p$-geometric $(\Phi,N)$-eigenvalue]\label{def:lstort_p}
The \emph{first $p$-geometric $(\Phi,N)$-ei\-gen\-value} of a set $F\subset\R^d$ is 
\begin{equation}
\label{eq:lstort_p}
\lstort^{\Phi,N}_{1,p}(F)
=
\inf\set*{
\Phi(\lambda_{1,p}(\nupla E)) : \nupla E\ \text{is an $N$-set of $F$}
}\in[0,\infty],
\end{equation}
where, for brevity, we have set
\begin{equation*}
\lambda_{1,p}(\nupla E)
=
\big(\lambda_{1,p}(\nupla E_1),\dots,\lambda_{1,p}(\nupla E_N)\big).
\end{equation*} 
Any $N$-set $\E$ of $F$ achieving the infimum is a \emph{$(p,\Phi)$-eigen-$N$-set} of $F$. 
\end{definition}

Note that, as always for a non-empty, bounded, open set $\Omega$, we have $\lstort^{\Phi,N}_{1,p}(\Omega)<\infty$, as $\lambda_{1,p}(\nupla E_i)<\infty$ for $i=1,\dots,N$, simply by choosing $\nupla E$ as any collection of $N$ disjoint open balls contained in $\Omega$.

Just as we gave a functional counterpart to \cref{def:lstort_1} with \cref{def:Lambda_1}, we also define the functional counterpart to the previous \cref{def:lstort_p}. To do so, we first define our competitors, in analogy to \cref{def:1N-function} (see also the auxiliary problem introduced in the proof of~\cite{BP18}*{Prop.~5.1}).

\begin{definition}[$(p,N)$-function]
\label{def:pN_function}
We say that $\nupla u\in W^{1,p}_0(F;\R^N)$ is a \emph{$(p,N)$-function} of $F\subset\R^d$ if $\nupla u_i\ge0$, $\|\nupla u_i\|_{L^p}=1$ and $\nupla u_i\,\nupla u_j=0$ a.e.\ in $F$ whenever $i\neq j$, for $i,j=1,\dots,N$.
\end{definition}

We can now introduce the following definition, which extends \cref{def:Lambda_1} to $p>1$.

\begin{definition}[First $p$-functional $(\Phi,N)$-eigenvalue]
The \emph{first $p$-functional $(\Phi,N)$-ei\-gen\-value} of a set $F\subset\R^d$ is 
\begin{equation}
\label{eq:Lambda_p}
\Lambda^{\Phi,N}_{1,p}(F)
=
\inf\set*{
\Phi([\nupla u]_{p,F}^p) : \nupla u\ \text{is a $(p,N)$-function of $F$}
}\in[0,\infty],
\end{equation}
where, for brevity, we have set
\begin{equation*}
[\nupla u]_{p,F}^p
=
\big(\|\nabla\nupla u_1\|_{L^p}^p,\dots,\|\nabla\nupla u_N\|_{L^p}^p\big)
\end{equation*} 
and, if no confusion can arise, we shall drop the reference to the ambient set $F$ and write $[\nupla u]_{p}^p$.
Any $(p,N)$-function $\nupla u$ of $F$ achieving the infimum is a \emph{$(p,\Phi)$-eigen-$N$-function} of~$F$. 
\end{definition} 

As always, when considering a non-empty, bounded, and open set $\Omega$, we have $\Lambda^{\Phi,N}_{1,p}(\Omega)<\infty$, as a viable competitor is given by an $N$-tuple of Sobolev functions supported on $N$ disjoint open balls contained in $\Omega$.

The following result is the analog of \cref{res:existence_f}, see also the proof of~\cite{BP18}*{Prop.~5.1}, ensuring existence of minimizers when $\Omega$ is a non-empty, bounded, and open set.

\begin{theorem}[Existence of minimizers of $\Lambda^{\Phi,N}_{1,p}(\Omega)$]
\label{res:existence_p}
Let \ref{p:lsc}, \ref{p:c}, and \ref{p:m} be in force.
Then, $(p,\Phi)$-eigen-$N$-functions of $\Omega$ exist.
\end{theorem}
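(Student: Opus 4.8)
The plan is to mimic the proof of \cref{res:existence_f}, replacing the $BV$ compactness with the Rellich--Kondrachov embedding. First I would fix an infimizing sequence $\set*{\nupla u^k : k\in\N}$ of $(p,N)$-functions of $\Omega$ and, given $\eps>0$, use \ref{p:c} to bound, for all $k$ large enough,
\[
\Lambda^{\Phi,N}_{1,p}(\Omega)+\eps \ge \Phi\big([\nupla u^k]_{p}^{p}\big) \ge \delta\,\|\nabla\nupla u^k_i\|_{L^p}^p
\quad\text{for each}\ i=1,\dots,N,
\]
where $\delta>0$ is as in \ref{p:c}. Together with $\|\nupla u^k_i\|_{L^p}=1$, this yields a uniform bound for $\nupla u^k_i$ in $W^{1,p}_0(\Omega)$. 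Since $\Omega$ is bounded, the embedding $W^{1,p}_0(\Omega)\subset L^p(\Omega)$ is compact (recall \cref{rem:Sobolev_emb}); hence, up to a subsequence, $\nupla u^k_i\to\nupla u_i$ strongly in $L^p(\Omega)$ and weakly in $W^{1,p}(\R^d)$, and (passing to a further subsequence) also a.e.\ in $\Omega$, for each $i=1,\dots,N$ and some $\nupla u_i\in W^{1,p}_0(\Omega)$.

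Next I would check that $\nupla u=(\nupla u_1,\dots,\nupla u_N)$ is a $(p,N)$-function of $\Omega$ in the sense of \cref{def:pN_function}. Non-negativity of each $\nupla u_i$ and the disjointness relations $\nupla u_i\,\nupla u_j=0$ a.e.\ in $\Omega$ for $i\neq j$ pass to the limit thanks to the a.e.\ convergence, while $\|\nupla u_i\|_{L^p}=1$ is preserved by the strong $L^p$ convergence. In particular, no volume-collapse argument is needed here, unlike in \cref{res:existence}, since the normalization of each component is automatically inherited. Finally, after passing to a subsequence so that $\Phi\big([\nupla u^k]_{p}^{p}\big)\to\Lambda^{\Phi,N}_{1,p}(\Omega)$ and, in addition, each component $\|\nabla\nupla u^k_i\|_{L^p}^p$ converges, the weak lower semicontinuity of the $L^p$-seminorm gives $[\nupla u]_{p}^{p}\le\lim_{k\to\infty}[\nupla u^k]_{p}^{p}$ component-wise, so that, by \ref{p:m} and then \ref{p:lsc},
\[
\Phi\big([\nupla u]_{p}^{p}\big)\le\Phi\Big(\lim_{k\to\infty}[\nupla u^k]_{p}^{p}\Big)\le\liminf_{k\to\infty}\Phi\big([\nupla u^k]_{p}^{p}\big)=\Lambda^{\Phi,N}_{1,p}(\Omega),
\]
which forces equality and shows that $\nupla u$ is a $(p,\Phi)$-eigen-$N$-function of $\Omega$.

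The routine steps — the coercivity bound, the compactness, and the lower semicontinuity chain — are exactly as in \cref{res:existence_f}, with $p$ in place of $1$. The only genuinely delicate point is the stability of the competitor class under the limit, i.e.\ that the pointwise product condition $\nupla u_i\,\nupla u_j=0$ survives; this is where the a.e.\ convergence extracted from the compact embedding is essential, and it is the step I would spell out, leaving the rest to the reader in the spirit of the surrounding proofs.
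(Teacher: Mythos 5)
Your argument is correct and follows essentially the same route as the paper's proof: an infimizing sequence, the coercivity bound from \ref{p:c}, compactness of the embedding $W^{1,p}_0(\Omega)\subset L^p(\Omega)$, and the lower semicontinuity chain combining \ref{p:m} and \ref{p:lsc}. The extra details you spell out (a.e.\ convergence to preserve non-negativity and the disjointness condition, and the passage to a subsequence along which the component seminorms converge) are exactly the steps the paper leaves to the reader with ``it is easy to see''.
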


\begin{proof}
Let $\set*{\nupla u^k:k\in\N}$ be an infimizing sequence for $\Lambda_{1,p}^{\Phi,N}(\Omega)$ and let $\eps>0$.
By \ref{p:c}, for all $k\in\N$ sufficiently large we have that 
\begin{equation*}
\Lambda_{1,p}^{\Phi,N}(\Omega)+\eps
\ge 
\Phi([\nupla u^k]_{p}^p)
\ge 
\delta\|\nabla\nupla u^k_i\|_{L^p}^p,
\end{equation*}
where $\delta>0$ is as in \ref{p:c}.
Since $\Omega\subset\R^d$ is bounded, the embedding $W^{1,p}_0(\Omega)\subset L^p(\Omega)$ is compact.
Thus, up to subsequences, $\nupla u^k_i\to\nupla u_i$ as $k\to\infty$ in $L^p(\Omega)$ for $i=1,\dots,N$, for some $\nupla u_i\in L^p(\Omega)$.
It is easy to see that $\nupla u$ is a $(p,N)$-function of $\Omega$ with 
\begin{equation*}
\Phi([\nupla u]_{p}^p)
\le 
\Phi\left(\liminf_{k\to\infty} 
[\nupla u^k]_{p}^p\right)
\le 
\liminf_{k\to\infty} 
\Phi\left([\nupla u^k]_{p}^p\right)
=
\Lambda_{1,p}^{\Phi,N}(\Omega)
\end{equation*}
by the lower semicontinuity of the seminorms, \ref{p:m}, and \ref{p:lsc}. The claim follows.
\end{proof}

\begin{remark}[More general version of \cref{res:existence_p}]
Similarly to \cref{rem:existence_f}, to ensure \cref{res:existence_p} it is enough to assume that $\Omega\subset\R^d$ is a bounded measurable set with $|\Omega|>0$ containing at least a viable competitor.
As for \cref{res:existence_f}, the boundedness of $\Omega$ cannot be relaxed to $|\Omega|<\infty$, as this does not necessarily guarantee the compactness of the embedding $W^{1,p}_0(\Omega)\subset L^p(\Omega)$. 
For a more detailed discussion, see~\cite{Maz11book}*{Sect.~6.4.3} which contains the sharp hypotheses to ensure the compactness of the embedding. 
\end{remark}

\begin{remark}
The content of this subsection can be rephrased in the abstract setting of~\cite{FPSS24}, once suitable Sobolev spaces are available, see~\cite{FPSS24}*{Sects.~2.3.3, 6.1, and 7.1}.
\end{remark}

The following result states that the $p$-geometric and $p$-functional eigenvalues for a non-empty, bounded, and open set $\Omega$ coincide. 
\cref{res:equality_p} is the analog of \cref{res:lstort=Lambda_1}, and we omit its proof since it can be repeated almost \emph{verbatim}.

\begin{theorem}[$\Lambda_{1,p}^{\Phi,N}(\Omega)=\lstort_{1,p}^{\Phi,N}(\Omega)$]
\label{res:equality_p}
The following holds
\begin{equation*}
\lstort_{1,p}^{\Phi,N}(\Omega)\ge\Lambda_{1,p}^{\Phi,N}(\Omega).    
\end{equation*}
If \ref{p:m} is in force, then
\begin{equation*}
\lstort_{1,p}^{\Phi,N}(\Omega)=\Lambda_{1,p}^{\Phi,N}(\Omega).    
\end{equation*}
Moreover, if $\nupla u$ is a $(p,\Phi)$-eigen-$N$-function of $\Omega$, then 
\begin{equation}
\label{eq:function_to_set}
\nupla E
=
\big(\set*{\nupla u_1>0},\dots,\set*{\nupla u_N>0}\big)
\end{equation}
is a $(p,\Phi)$-eigen-$N$-set of $\Omega$.
Viceversa, if $\E$ is a $(p,\Phi)$-eigen-$N$-set of $\Omega$, there exists a $(p,\Phi)$-eigen-$N$-function $\nupla u$ such that $\set*{\nupla u_i>0} \subset \E_i$ for all $i=1,\dots,N$.
\end{theorem}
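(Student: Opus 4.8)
The plan is to mirror the argument of \cref{res:lstort=Lambda_1}, replacing the $BV_0$ space and the first $1$-eigenvalue with $W^{1,p}_0$ and $\lambda_{1,p}$, and invoking the existence of non-negative $p$-eigenfunctions recorded in \cref{rem:restriction_pos_comp_lp,rem:ex_eigenfunctions} in place of \cref{rem:restriction_pos_comp_l1,rem:ex_eigenfunctions_1}. Throughout, the only structural facts used are that $\nupla u_i$ vanishes a.e.\ outside $\set*{\nupla u_i>0}$ (so it is an admissible competitor for $\lambda_{1,p}(\set*{\nupla u_i>0})$) and that chambers of an $N$-set are bounded with positive measure, which guarantees attainment of $\lambda_{1,p}$.

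First, for the bound $\lstort_{1,p}^{\Phi,N}(\Omega)\ge\Lambda_{1,p}^{\Phi,N}(\Omega)$, I would fix $\eps>0$, pick an $N$-set $\nupla E$ of $\Omega$ with $\lstort_{1,p}^{\Phi,N}(\Omega)+\eps>\Phi(\lambda_{1,p}(\nupla E))$ and $\lambda_{1,p}(\nupla E)\in\ort$, and then choose, for each $i$, a non-negative $\nupla u_i\in W^{1,p}_0(\nupla E_i)$ with $\|\nupla u_i\|_{L^p}=1$ and $\|\nabla\nupla u_i\|_{L^p}^p=\lambda_{1,p}(\nupla E_i)$. Since the $\nupla E_i$ are pairwise disjoint, $\nupla u=(\nupla u_1,\dots,\nupla u_N)$ is a $(p,N)$-function of $\Omega$ with $[\nupla u]_{p}^p=\lambda_{1,p}(\nupla E)$, whence $\lstort_{1,p}^{\Phi,N}(\Omega)+\eps>\Phi([\nupla u]_{p}^p)\ge\Lambda_{1,p}^{\Phi,N}(\Omega)$; letting $\eps\to0$ gives the claim. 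For the converse under \ref{p:m}: given any $(p,N)$-function $\nupla u$ of $\Omega$, the tuple $\nupla E$ in~\eqref{eq:function_to_set} is an $N$-set of $\Omega$ (positive measure since $\|\nupla u_i\|_{L^p}=1$, pairwise disjoint since $\nupla u_i\nupla u_j=0$ a.e.), and $\lambda_{1,p}(\nupla E)\le[\nupla u]_{p}^p$ because each $\nupla u_i$ is admissible for $\lambda_{1,p}(\set*{\nupla u_i>0})$; applying \ref{p:m} yields $\Phi(\lambda_{1,p}(\nupla E))\le\Phi([\nupla u]_{p}^p)$, hence $\lstort_{1,p}^{\Phi,N}(\Omega)\le\Lambda_{1,p}^{\Phi,N}(\Omega)$ and equality.

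For the statements on minimizers I would argue as follows. If $\nupla u$ is a $(p,\Phi)$-eigen-$N$-function of $\Omega$, then with $\nupla E$ as in~\eqref{eq:function_to_set} the chain $\lstort_{1,p}^{\Phi,N}(\Omega)\le\Phi(\lambda_{1,p}(\nupla E))\le\Phi([\nupla u]_{p}^p)=\Lambda_{1,p}^{\Phi,N}(\Omega)=\lstort_{1,p}^{\Phi,N}(\Omega)$ forces all inequalities to be equalities, so $\nupla E$ is a $(p,\Phi)$-eigen-$N$-set. Conversely, if $\nupla E$ is a $(p,\Phi)$-eigen-$N$-set then $\lambda_{1,p}(\nupla E_i)<\infty$ for every $i$, so by \cref{rem:restriction_pos_comp_lp,rem:ex_eigenfunctions} there are non-negative $\nupla u_i\in W^{1,p}_0(\nupla E_i)$ with $\|\nupla u_i\|_{L^p}=1$ and $\|\nabla\nupla u_i\|_{L^p}^p=\lambda_{1,p}(\nupla E_i)$, giving a $(p,N)$-function $\nupla u$ with $\Lambda_{1,p}^{\Phi,N}(\Omega)\le\Phi([\nupla u]_{p}^p)=\Phi(\lambda_{1,p}(\nupla E))=\lstort_{1,p}^{\Phi,N}(\Omega)$; by the first part $\nupla u$ is a $(p,\Phi)$-eigen-$N$-function, and $\set*{\nupla u_i>0}\subset\nupla E_i$ by construction.

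I do not expect a genuine obstacle: the whole argument is the $p>1$ transcription of \cref{res:lstort=Lambda_1}. The only point deserving care is the passage from an $N$-set to a $(p,N)$-function, which relies on the attainment of $\lambda_{1,p}$ on bounded chambers and on the possibility of taking the eigenfunctions non-negative — exactly the content of \cref{rem:restriction_pos_comp_lp,rem:ex_eigenfunctions}. Accordingly, the full proof can be omitted as in the statement, or written out in a few lines following the scheme above.
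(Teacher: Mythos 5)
Your proposal is correct and is exactly the argument the paper intends: the paper omits the proof of \cref{res:equality_p}, stating it repeats that of \cref{res:lstort=Lambda_1} almost verbatim, and your transcription to $W^{1,p}_0$ and $\lambda_{1,p}$, using \cref{rem:restriction_pos_comp_lp,rem:ex_eigenfunctions} for attainment by non-negative eigenfunctions on the bounded chambers, is precisely that. No gaps.
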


\begin{remark}
Analogously to the case $p=1$, \cref{res:equality_p} yields that, up to possibly passing to a smaller $N$-subset, each chamber of a $(p,\Phi)$-eigen-$N$-set of $\Omega$ is the zero superlevel set of a $(p,\Phi)$-eigen-$N$-function of $\Omega$.
Actually, if a chamber is open, then the set inclusion is an equality, since the corresponding eigenfunction is strictly positive on the entire chamber as a consequence of Harnack's inequality, refer for instance to~\cite{KL06}*{Sect.~2}.
\end{remark}

\begin{remark}[More general version of \cref{res:equality_p}]
As in \cref{rem:equality_1}, the equality $\Lambda_{1,p}^{\Phi,N}(\Omega)=\lstort^{\Phi,N}_{1,p}(\Omega)$ can be achieved under weaker  assumptions on $\Omega$---in fact, more generally, within the abstract setting of~\cite{FPSS24}, at least enforcing properties (RP.1), (RP.2), (RP.3), (RP.4), (RP.+), and (RP.L) of~\cite{FPSS24}*{Sect.~2.3}, and also property (P.7) of~\cite{FPSS24}*{Sect.~2.1}.
For an account on the strategy, we refer to~\cite{FPSS24}*{Sect.~6.1}  (recall also the plainer approach available in the metric-measure framework, see~\cite{FPSS24}*{Sect.~7.1}).
\end{remark}

\cref{res:existence_p,res:equality_p} immediately yield the following result.

\begin{corollary}[Existence of minimizers of $\lstort^{\Phi,N}_{1,p}(\Omega)$]
\label{cor:existence_geom_p}
Let \ref{p:lsc}, \ref{p:c}, and \ref{p:m} be in force.
Then, $(p,\Phi)$-eigen-$N$-sets of $\Omega$ exist.
\end{corollary}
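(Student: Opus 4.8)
The plan is to obtain the statement as a direct consequence of the two preceding theorems of this subsection, with essentially no new work. First I would apply \cref{res:existence_p}: its hypotheses are precisely \ref{p:lsc}, \ref{p:c}, and \ref{p:m}, which are assumed here, so it yields a $(p,\Phi)$-eigen-$N$-function $\nupla u=(\nupla u_1,\dots,\nupla u_N)$ of $\Omega$, i.e., a $(p,N)$-function in the sense of \cref{def:pN_function} that realizes the infimum defining $\Lambda^{\Phi,N}_{1,p}(\Omega)$.

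Next I would invoke \cref{res:equality_p}. Since \ref{p:m} is in force, we have the equality $\lstort^{\Phi,N}_{1,p}(\Omega)=\Lambda^{\Phi,N}_{1,p}(\Omega)$, and, more importantly, the second part of that theorem tells us that from the minimizing $(p,N)$-function $\nupla u$ above the $N$-tuple
\[
\nupla E=\big(\set*{\nupla u_1>0},\dots,\set*{\nupla u_N>0}\big)
\]
appearing in~\eqref{eq:function_to_set} is a $(p,\Phi)$-eigen-$N$-set of $\Omega$. This exhibits the desired minimizer and concludes the proof. For completeness one may recall why $\nupla E$ is genuinely an $N$-set of $\Omega$: each chamber $\set*{\nupla u_i>0}$ has positive measure because $\|\nupla u_i\|_{L^p}=1$, the chambers are pairwise disjoint because $\nupla u_i\,\nupla u_j=0$ a.e.\ for $i\neq j$ by \cref{def:pN_function}, and $\set*{\nupla u_i>0}\subset\Omega$ since $\nupla u_i\in W^{1,p}_0(\Omega)$; but all of this is already established inside \cref{res:equality_p}.

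I do not expect any genuine obstacle: the result is a formal corollary, and the only point worth a half-sentence of care is the observation just made, namely that the passage from a minimizing $(p,N)$-function to a minimizing $N$-set is exactly what the correspondence in \cref{res:equality_p} provides, so no further compactness or lower semicontinuity argument is needed beyond those already used in \cref{res:existence_p}.
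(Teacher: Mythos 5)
Your argument is correct and is exactly the paper's: the corollary is stated as an immediate consequence of \cref{res:existence_p} combined with \cref{res:equality_p}, which is precisely the two-step deduction you give. No gaps; the extra remark on why \eqref{eq:function_to_set} is a genuine $N$-set is a harmless (and already covered) addition.
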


\subsection{Boundedness of functional minimizers}

We now provide an analog of \cref{res:bounded_eigenf} for minimizers of the problem~\eqref{eq:Lambda_p}, see \cref{res:bounded_eigenf_p} below, in the spirit of~\cite{BLP14}*{Th.~3.3} (see also~\cite{FP14}*{Th.~3.2}). To this aim, we first need to introduce some terminology, as follows.

\begin{definition}[$C^1$ smoothness]
\label{def:C1}
We say that $\Phi\colon\ort\to[0,\infty)$ is of \emph{class $C^1$} if, for any $\nupla v\in\R^N$, there exist an open neighborhood $V\subset\R^N$ of $\nupla v$ in $\R^N$ and $\widetilde\Phi\in C^1(V)$ such that $\widetilde\Phi=\Phi$ on $V\cap\ort$.
In this case, we let $\nabla\Phi(\nupla v)=\nabla\widetilde\Phi(\nupla v)$.
\end{definition}

It is worth noticing that, if $\Phi$ is of class $C^1$ as in \cref{def:C1}, then $\nabla\Phi(\nupla v)$ depends neither on the choice of the neighborhood $V$ of $\nupla v$ in $\R^N$ nor of the extension $\widetilde\Phi$ of $\Phi$ in $V$, but only on the values of $\Phi$ in the closed cone $\ort$.
In particular, if $\Phi$ is of class $C^1$, then it is of class $C^1$ in the interior of $\ort$.
Furthermore, as the reader may observe, \cref{def:C1} may be relaxed in several ways, as it is not needed in its full force in the results below.
We prefer not to stress this point here, as it is not of crucial importance. 

We can now state the following analog of \cref{res:bounded_eigenf}.
Note that we do not treat the case $p>N$, as in this case the boundedness of minimizers of~\eqref{eq:Lambda_p} trivially follows from Morrey's inequality.

\begin{theorem}
\label{res:bounded_eigenf_p}
Let $\Phi$ of class $C^1$, $p\le N$, and $\nupla u$ be a $(p,\Phi)$-eigen-$N$-function of $\Omega\subset \R^d$.
If $\partial_i\Phi([\nupla u]_p^p)>0$ for some $i\in\set*{1,\dots,N}$, then $\nupla u_i\in L^\infty(\Omega)$, with
\begin{equation}
\label{eq:bounded_eigenf_p}
\|\nupla u_i\|_{L^\infty}
\le 
C_i,
\end{equation}
where $C_i>0$ depends on $d$, $p$, and $\lambda_{1,p}(\set*{u_i>0})$, and also on $\Omega$ if $p=N$, but is independent of $\Phi$.
\end{theorem}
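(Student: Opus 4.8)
The plan is to reduce the claim to an Euler--Lagrange equation for the single component $w:=\nupla u_i$ and then run a De Giorgi--Moser iteration, in the spirit of~\cite{BLP14}*{Th.~3.3} and~\cite{FP14}*{Th.~3.2}. Write $E=\set*{w>0}$ and $a=\|\nabla w\|_{L^p}^p$, and recall that $w\in W^{1,p}_0(E)$ is nonnegative, with $\|w\|_{L^p}=1$ and compact support. The hypotheses that $\Phi$ is of class $C^1$ and that $\partial_i\Phi([\nupla u]_p^p)>0$ are exactly what turn the stationarity of $\Phi$ at the minimizer into an equation for $w$ alone.

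First I would derive that equation by perturbing only the $i$-th component. Given $\psi\in W^{1,p}_0(E)\cap L^\infty(\R^d)$ and $|\eps|$ small, replace $\nupla u_i$ in $\nupla u$ by $|w+\eps\psi|\,\|w+\eps\psi\|_{L^p}^{-1}$, leaving the other components untouched; since $\set*{|w+\eps\psi|\neq0}\subseteq\set*{w\neq0}\cup\set*{\psi\neq0}\subseteq E$ up to null sets, this is again a $(p,N)$-function of~$\Omega$. Hence the map
\begin{equation*}
\eps\longmapsto\Phi\left(\dots,\frac{\|\nabla(w+\eps\psi)\|_{L^p}^p}{\|w+\eps\psi\|_{L^p}^p},\dots\right),
\end{equation*}
with the remaining entries frozen at $\|\nabla\nupla u_j\|_{L^p}^p$, has a minimum at $\eps=0$. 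Differentiating under the integral sign, $\eps\mapsto\int_{\R^d}|\nabla w+\eps\nabla\psi|^p\de x$ and $\eps\mapsto\int_{\R^d}|w+\eps\psi|^p\de x$ are of class $C^1$ near $\eps=0$, with derivatives at $0$ equal to $p\int|\nabla w|^{p-2}\nabla w\cdot\nabla\psi\de x$ and $p\int w^{p-1}\psi\de x$; composing with $\Phi\in C^1$ and using $\partial_i\Phi([\nupla u]_p^p)>0$, the vanishing of the derivative at $\eps=0$ gives, after dividing out $p\,\partial_i\Phi([\nupla u]_p^p)\neq0$,
\begin{equation*}
\int_{\R^d}|\nabla w|^{p-2}\nabla w\cdot\nabla\psi\de x=a\int_{\R^d}w^{p-1}\psi\de x\qquad\text{for all }\psi\in W^{1,p}_0(E),
\end{equation*}
by density. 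Thus $w$ is a positive weak solution of $-\Delta_p w=a\,w^{p-1}$ in $E$ with $\|w\|_{L^p}=1$; by Picone's inequality this forces $a=\lambda_{1,p}(E)=\lambda_{1,p}(\set*{\nupla u_i>0})$.

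Next I would iterate. Testing the equation with $(w-k)^+\in W^{1,p}_0(E)$ for $k>0$ gives the Caccioppoli-type identity
\begin{equation*}
\int_{\set*{w>k}}|\nabla w|^p\de x=\lambda_{1,p}(E)\int_{\set*{w>k}}w^{p-1}(w-k)\de x,
\end{equation*}
and, together with the Sobolev inequality on $\R^d$ and $\|w\|_{L^p}=1$, this is precisely the input of the standard De Giorgi--Moser scheme over a sequence of truncation levels. When $p<N$ it yields $\|w\|_{L^\infty}\le C$ with $C=C\bigl(d,p,\lambda_{1,p}(\set*{\nupla u_i>0})\bigr)$, exactly as in~\cite{BLP14}*{Th.~3.3}; in the borderline case $p=N$ the Sobolev embedding has no finite target exponent, so one iterates on a scale of $L^q$ norms following Moser--Trudinger and the constant additionally depends on $|\Omega|$, i.e.\ on~$\Omega$, again as in~\cite{BLP14}*{Th.~3.3} and~\cite{FP14}*{Th.~3.2}. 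In both cases $C$ depends only on $d$, $p$, and $\lambda_{1,p}(\set*{\nupla u_i>0})$ (and $|\Omega|$ when $p=N$), hence not on~$\Phi$.

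I expect the main obstacle to be the first step: one must choose the perturbation so that the competitor remains a genuine $(p,N)$-function (nonnegativity and pairwise disjoint supports), which is why one perturbs $w$ through $|w+\eps\psi|$ rather than through $w+\eps\psi$ or $(w-\eps\psi)^+$---the latter being awkward because its Dirichlet energy need not be differentiable in~$\eps$---and one must justify differentiating the $L^p$-type functionals under the integral sign, which is routine for $p>1$. Once the equation is available, the iteration is classical, the only genuine subtlety being the additional care required in the critical case $p=N$.
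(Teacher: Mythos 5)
Your proposal is correct and takes essentially the same route as the paper: the normalized one-component perturbation $|\nupla u_i+\eps\varphi|/\|\nupla u_i+\eps\varphi\|_{L^p}$ yielding the Euler--Lagrange identity of \cref{res:pde_p}, the hypothesis $\partial_i\Phi([\nupla u]_p^p)>0$ used to divide out the factor coming from $\Phi$, and then a standard iteration based on the Sobolev inequality on $\R^d$ (on $\Omega$ in the borderline case, where the relevant threshold is of course the dimension $d$), exactly as in~\cite{BLP14}*{Th.~3.3}. The only deviations are minor and harmless: you run a De Giorgi--Stampacchia truncation-level scheme where the paper performs Moser's power iteration with the test functions $(\min\set*{\nupla u_i,M})^\beta$, and you justify via Picone's inequality the identity $\|\nabla\nupla u_i\|_{L^p}^p=\lambda_{1,p}(\set*{\nupla u_i>0})$, which the paper invokes without proof in \cref{res:pde_p} --- a welcome addition.
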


For the proof of \cref{res:bounded_eigenf_p}, we need the following simple preliminary result.

\begin{lemma}
\label{res:pde_p}
Let $\Phi$ be of class $C^1$.
If $\nupla u$ is a $(p,\Phi)$-eigen-$N$-function of $\Omega$, then
\begin{equation}
\label{eq:pde_p}
\partial_i\Phi([\nupla u]_p^p)
\,
\left(
\int_{\Omega}|\nabla\nupla u_i|^{p-2}\,\scalar*{\nabla\nupla u_i,\nabla\varphi}\de x
-
\lambda_{1,p}(\set*{\nupla u_i>0})
\int_{\Omega}|\nupla u_i|^{p-2}\,\nupla u_i\,\varphi\de x
\right)
=0
\end{equation}
for every $\varphi\in W^{1,p}_0(\set*{\nupla u_i>0})$ and $i=1,\dots,N$.
\end{lemma}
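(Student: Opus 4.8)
The plan is to derive the Euler--Lagrange equation for the minimization problem \eqref{eq:Lambda_p} by perturbing the $i$-th component $\nupla u_i$ of a $(p,\Phi)$-eigen-$N$-function $\nupla u$ while keeping the other components fixed, taking advantage of the fact that $\Phi$ is of class $C^1$ so that the composite functional is differentiable. First I would fix $i\in\set*{1,\dots,N}$ and a test function $\varphi\in W^{1,p}_0(\set*{\nupla u_i>0})$. For $|s|$ small, consider the competitor obtained by replacing $\nupla u_i$ with the normalized perturbation
\begin{equation*}
\nupla u_i^s=\frac{\nupla u_i+s\varphi}{\|\nupla u_i+s\varphi\|_{L^p}},
\end{equation*}
leaving $\nupla u_j$ unchanged for $j\ne i$. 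Since $\varphi$ is supported in $\set*{\nupla u_i>0}$, the supports stay pairwise disjoint, so $\nupla u^s=(\nupla u_1,\dots,\nupla u_i^s,\dots,\nupla u_N)$ is again a $(p,N)$-function of $\Omega$ (the sign constraint $\nupla u_i^s\ge0$ can be handled by truncation, or simply dropped as in \cref{rem:restriction_pos_comp_lp} since one may work with $|\nupla u_i^s|$ without changing the $L^p$ norms of function or gradient — here though $\varphi$ is supported where $\nupla u_i>0$ so positivity is automatic for small $s$ if one first establishes it, and in any case only the derivative at $s=0$ matters).

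Next I would compute $\frac{d}{ds}\big|_{s=0}$ of the energy $s\mapsto\Phi([\nupla u^s]_p^p)$. Writing $g_i(s)=\|\nabla(\nupla u_i+s\varphi)\|_{L^p}^p$ and $n_i(s)=\|\nupla u_i+s\varphi\|_{L^p}^p$, one has $\|\nabla\nupla u_i^s\|_{L^p}^p=g_i(s)/n_i(s)$, and the other components of $[\nupla u^s]_p^p$ are constant in $s$. By the chain rule and \cref{def:C1},
\begin{equation*}
\frac{d}{ds}\Big|_{s=0}\Phi([\nupla u^s]_p^p)
=\partial_i\Phi([\nupla u]_p^p)\cdot\frac{d}{ds}\Big|_{s=0}\frac{g_i(s)}{n_i(s)},
\end{equation*}
using $n_i(0)=\|\nupla u_i\|_{L^p}^p=1$ and $g_i(0)=\|\nabla\nupla u_i\|_{L^p}^p$. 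Standard differentiation under the integral sign gives $g_i'(0)=p\int_\Omega|\nabla\nupla u_i|^{p-2}\scalar*{\nabla\nupla u_i,\nabla\varphi}\de x$ and $n_i'(0)=p\int_\Omega|\nupla u_i|^{p-2}\nupla u_i\,\varphi\de x$, hence
\begin{equation*}
\frac{d}{ds}\Big|_{s=0}\frac{g_i(s)}{n_i(s)}
=p\left(\int_\Omega|\nabla\nupla u_i|^{p-2}\scalar*{\nabla\nupla u_i,\nabla\varphi}\de x
-g_i(0)\int_\Omega|\nupla u_i|^{p-2}\nupla u_i\,\varphi\de x\right).
\end{equation*}
Since $\nupla u$ minimizes \eqref{eq:Lambda_p} over $(p,N)$-functions and $\nupla u^s$ is an admissible family with $\nupla u^0=\nupla u$, the derivative at $s=0$ vanishes, which (dividing by $p$) yields \eqref{eq:pde_p} provided one identifies $g_i(0)=\|\nabla\nupla u_i\|_{L^p}^p$ with $\lambda_{1,p}(\set*{\nupla u_i>0})$.

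This last identification is the one genuinely nontrivial point: one must know that the restriction of $\nupla u_i$ to its own zero superlevel set is a first $p$-eigenfunction of that set, i.e.\ $\|\nabla\nupla u_i\|_{L^p}^p=\lambda_{1,p}(\set*{\nupla u_i>0})$. This follows from \cref{res:equality_p}: setting $\nupla E=(\set*{\nupla u_1>0},\dots,\set*{\nupla u_N>0})$, that proposition (applied as in its statement, via $\lstort^{\Phi,N}_{1,p}=\Lambda^{\Phi,N}_{1,p}$) shows $\nupla E$ is a $(p,\Phi)$-eigen-$N$-set, and inspecting its proof, each $\nupla u_i$ must be an eigenfunction of $\lambda_{1,p}(\set*{\nupla u_i>0})$ — otherwise replacing $\nupla u_i$ by a genuine eigenfunction on $\set*{\nupla u_i>0}$ would strictly decrease $\|\nabla\nupla u_i\|_{L^p}^p$ and hence, were $\partial_i\Phi>0$, the energy; when $\partial_i\Phi([\nupla u]_p^p)=0$ the factor in front of \eqref{eq:pde_p} is zero and the identity holds trivially, so no identification is needed. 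The main obstacle, then, is purely bookkeeping: making the perturbation admissible (disjoint supports, nonnegativity, $L^p$-normalization) and justifying differentiation under the integral sign for the $p$-energy, both of which are routine since $\varphi\in W^{1,p}_0(\set*{\nupla u_i>0})$ and $\Omega$ is bounded.
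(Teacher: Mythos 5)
Your plan is essentially the paper's own proof: perturb only the $i$-th component through the normalized competitor $|\nupla u_i+\eps\varphi|/\|\nupla u_i+\eps\varphi\|_{L^p}$, differentiate under the integral sign, apply the quotient rule at $\eps=0$, and use minimality to kill the derivative; the absolute value (as in \cref{rem:restriction_pos_comp_lp}) is the right way to keep the competitor admissible, whereas your aside that positivity is ``automatic for small $s$'' is not true in general (there is no positive lower bound for $\nupla u_i$ on its support), though harmless since you also propose the modulus. The one place where you are looser than you should be is the identification $\|\nabla\nupla u_i\|_{L^p}^p=\lambda_{1,p}(\set{\nupla u_i>0})$ when $\partial_i\Phi([\nupla u]_p^p)\neq 0$ (which the paper simply invokes): replacing $\nupla u_i$ by a first eigenfunction of $\set{\nupla u_i>0}$ drops the $i$-th coordinate by a possibly non-infinitesimal amount, and since $\Phi$ is only assumed $C^1$ here (not increasing), the pointwise sign $\partial_i\Phi>0$ does not by itself give a decrease of $\Phi$ across such a jump. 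This is easily repaired: interpolate with normalized convex combinations of $\nupla u_i$ and the eigenfunction (all nonnegative, supported in $\set{\nupla u_i>0}$), so the $i$-th coordinate takes values arbitrarily slightly below $\|\nabla\nupla u_i\|_{L^p}^p$, where the local strict monotonicity coming from $\partial_i\Phi([\nupla u]_p^p)>0$ applies and contradicts minimality; a similar one-line perturbation (slightly increasing the $i$-th coordinate) rules out $\partial_i\Phi([\nupla u]_p^p)<0$ altogether, and the case $\partial_i\Phi([\nupla u]_p^p)=0$ is trivial, as you note. With that fix your argument is complete and matches the paper's.
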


\begin{proof}
Without loss of generality, we may assume $i=1$.
Let $\varphi\in W^{1,p}_0(\set*{\nupla u_1>0})$ be fixed.
For $\eps\in\R$, we define $\nupla u^\eps=(\nupla u_1^\eps,\nupla u_2,\dots,\nupla u_N)$, where 
\begin{equation*}
\nupla u_1^\eps
=
\frac{|\nupla u_1+\eps\varphi|}{\|\nupla u_1+\eps\varphi\|_{L^p}}.
\end{equation*}
By definition, $\nupla u^\eps$ is a $(p,N)$-function of $\Omega$, with $\nupla u^\eps|_{\eps=0}=\nupla u$.
Due to the minimality of~$\nupla u$, the map $\eps\mapsto\Phi([\nupla u^\eps]_p^p)$  
achieves a local minimum at $\eps=0$.
Our aim is now to compute the derivative of this map at $\eps=0$.
Let us start by observing that 
\begin{equation}
\label{eq:sigaro}
[\nupla u^\eps]_{p}^p
=
\left(
\frac{\|\nabla\nupla u_1+\eps\nabla\varphi\|_{L^p}^p}{\|\nupla u_1+\eps\varphi\|_{L^p}^p},
\|\nabla\nupla u_2\|_{L^p}^p,
\dots,
\|\nabla\nupla u_N\|_{L^p}^p
\right)
\end{equation} 
for $\eps\in\R$ since, by the chain rule, $|\nabla|\nupla u_1+\eps\varphi||=|\nabla\nupla u_1+\eps\nabla\varphi|$ a.e.\ in $\Omega$. 
We observe that $\eps\mapsto\|\nabla\nupla u_1+\eps\nabla\varphi\|_{L^p}^p
$ and $\eps\mapsto\|\nupla u_1+\eps\varphi\|_{L^p}^p$ are of class $C^1$ in a neighborhood of $\eps=0$, since $p>1$, the map $t\mapsto |t|^{p}$ belongs to $ C^1(\R)$, with derivative equal to $t\mapsto p\,|t|^{p-2}\,t\in C^0(\R)$.
Moreover, owing to H\"older's inequality,
\begin{equation*}
(|\nupla u_1|+c\,|\varphi|)^{p-1}\,|\varphi|\quad \text{and} \quad
(|\nabla\nupla u_1|+c\,|\nabla\varphi|)^{p-1}\,|\nabla\varphi|
\end{equation*}
are in $L^1(\Omega)$ whenever $c\ge0$. Consequently, by differentiating under the integral sign, we get that
\begin{equation*}
\frac{\de}{\de\eps}
\int_{\Omega}|\nupla u_1+\eps\varphi|^p\de x
=
p
\int_{\Omega}|\nupla u_1+\eps\varphi|^{p-2}\,(\nupla u_1+\eps\varphi)\,\varphi\de x
\end{equation*}  
and, similarly,
\begin{equation*}
\frac{\de}{\de\eps}
\int_{\Omega}|\nabla \nupla u_1+\eps\nabla \varphi|^p\de x
=
p
\int_{\Omega}|\nabla\nupla u_1+\eps\nabla\varphi|^{p-2}\,\scalar*{\nabla \nupla u_1+\eps\nabla \varphi,\nabla\varphi}\de x,
\end{equation*}
both derivatives being continuous with respect to $\eps\in\R$.
Recalling that $\|\nupla u_1\|_{L^p}=1$, we see that $\|\nupla u_1+\eps\varphi\|_{L^p}^p\ge\sfrac12$ in a neighborhood of $\eps=0$, so that, by the quotient rule, the function in~\eqref{eq:sigaro} is of class $C^1$ in a neighborhood of $\eps=0$, with 
\begin{equation*}
\frac{\de}{\de\eps}
\frac{\|\nabla\nupla u_1+\eps\nabla\varphi\|_{L^p}^p}{\|\nupla u_1+\eps\varphi\|_{L^p}^p}
\bigg|_{\eps=0}
=
p
\int_{\Omega}|\nabla \nupla u_1|^{p-2}\,\scalar*{\nabla \nupla u_1, \nabla\varphi}\de x
-
p
\|\nabla\nupla u_1\|_{L^p}^p
\int_{\Omega}|\nupla u_1|^{p-2}\, \nupla u_1\, \varphi\de x.
\end{equation*}
Owing to the minimality of $\nupla u$, the regularity of $\Phi$, the chain rule, and the validity of the equality $\|\nabla \nupla u_1\|_{L^p}^p=\lambda_{1,p}(\set*{\nupla u_1>0})$, we hence get that 
\begin{equation*}
\begin{split}
0&=\frac{\de}{\de\eps}\Phi([\nupla u^\eps]_p^p)\bigg|_{\eps=0}
=
\partial_1\Phi([\nupla u]_p^p)
\,
\frac{\de}{\de\eps}
\frac{\|\nabla\nupla u_1+\eps\nabla\varphi\|_{L^p}^p}{\|\nupla u_1+\eps\varphi\|_{L^p}^p}
\bigg|_{\eps=0}
\\
&=
p
\partial_1\Phi([\nupla u]_p^p)
\,
\left(
\int_{\Omega}|\nabla\nupla u_1|^{p-2}\,\scalar*{\nabla\nupla u_1,\nabla\varphi}\de x
-
\lambda_{1,p}(\set*{\nupla u_1>0})
\int_{\Omega}|\nupla u_1|^{p-2}\,\nupla u_1\,\varphi\de x
\right)
\end{split}
\end{equation*} 
yielding the conclusion.
\end{proof}

We are now ready to prove \cref{res:bounded_eigenf_p}.

\begin{proof}[Proof of \cref{res:bounded_eigenf_p}]
Without loss of generality, we may assume $i=1$.
We follow the same strategy of the proof of~\cite{BLP14}*{Th.~3.3}.

We deal with the case $p<N$.
To this aim, we let $M\in(0,\infty)$ and $\beta\ge1$, and we apply \cref{res:pde_p} with the choice $\varphi=(\min\set*{\nupla u_1,M})^\beta$. 
It is not difficult to infer that $\varphi\in W^{1,p}_0(\set*{\nupla u_1>0})$ as in \cref{def:W_0} thanks to the chain rule.
Since $\partial_i\Phi([\nupla u]_p^p)\ne0$ by assumption,  equality~\eqref{eq:pde_p} in \cref{res:pde_p} immediately yields that 
\begin{equation*}
\int_{\R^d}|\nabla\nupla u_1|^{p-2}\,\scalar*{\nabla\nupla u_1,\nabla\varphi}\de x
=
\lambda_{1,p}(\set*{\nupla u_1>0})
\int_{\R^d}|\nupla u_1|^{p-2}\,\nupla u_1\,\varphi\de x.
\end{equation*}
By definition of $\varphi$, we easily recognize that
\begin{equation*}
\int_{\R^d}|\nupla u_1|^{p-2}\,\nupla u_1\,\varphi\de x
\le 
\int_{\R^d}\nupla u_1^{p+\beta-1}\de x
\end{equation*}
and
\begin{equation*}
\begin{split}
\int_{\R^d}|\nabla\nupla u_1|^{p-2}\,\scalar*{\nabla\nupla u_1,\nabla\varphi}\de x
&=
\beta\int_{\set*{\nupla u_1 < M}}|\nabla\nupla u_1|^p\,\nupla u_1^{\beta-1}\de x
\\
&=
\frac{\beta\,p^p}{(p+\beta-1)^p}
\int_{\R^d}\left|\,\nabla\left(\min\set*{\nupla u_1,M}^{\frac{p+\beta-1}{p}}\right)\right|^{p}\de x.
\end{split}
\end{equation*}
Owing to the Sobolev inequality in $W^{1,p}(\R^d)$, we also infer that
\begin{equation*}
\int_{\R^d}\left|\,\nabla\left(\min\set*{\nupla u_1,M}^{\frac{p+\beta-1}{p}}\right)\right|^{p}\de x
\ge 
c_{d,p}^p
\left(\int_{\R^d}\left(\min\set*{\nupla u_1,M}^{\frac{p+\beta-1}{p}}\right)^{\frac{dp}{d-p}}\de x\right)^{\frac{d-p}{d}},
\end{equation*}
where $c_{d,p}>0$ is the Gagliardo--Nirenberg--Sobolev embedding constant, depending on $d$ and $p$ only.
By combining all the above inequalities and then passing to the limit as $M\to\infty$, we conclude that
\begin{equation*}
\left(\int_{\R^d}\left(\nupla u_1^{\frac{p+\beta-1}{p}}\right)^{\frac{dp}{d-p}}\de x\right)^{\frac{d-p}{d}}
\le 
\frac{\lambda_{1,p}(\set*{\nupla u_1>0})}{c_{d,p}^p}
\left(\frac{\beta+p-1}{p}\right)^{p-1}
\int_{\R^d}\left(\nupla u_1^{\frac{p+\beta-1}{p}}\right)^p\de x,
\end{equation*}
where we used that $\frac{\beta+p-1}{p}\frac{1}{\beta}\le1$, since $\beta\ge1$.
Inequality~\eqref{eq:bounded_eigenf_p} hence follows by the very same iteration argument used in the proof of~\cite{BLP14}*{Th.~3.3}.
In particular, note that the constant in~\eqref{eq:bounded_eigenf_p} depends neither on $\Omega$ nor on $\Phi$.

The borderline case $p=N$ follows similarly, as in the second part of the proof of~\cite{BLP14}*{Th.~3.3}.
Here we only observe that, since $\set*{\nupla u_1>0}\subset\Omega$ obviously, one can exploit the Sobolev inequality on $\Omega$, instead of that on $\R^d$.
Consequently, in this case, the constant in~\eqref{eq:bounded_eigenf_p} depends on $\Omega$ (but still not on $\Phi$).
\end{proof}

\begin{remark}
\label{rem:constant_boundedness}
As in~\cite{BLP14}*{Rem.~3.4}, a close inspection of the above proof of \cref{res:bounded_eigenf_p} yields that, for $p<N$, the constant $C_i>0$ in~\eqref{eq:bounded_eigenf_p} is given by
\begin{equation*}
C_i
=
\left(\frac{d}{d-p}\right)^{\frac{d(d-p)}{p^{\scalebox{0.3}{2}}}\frac{p-1}p}
\left(\frac{\lambda_{1,p}(\set*{\nupla u_i>0}}{c_{d,p}^p}\right)^{\frac{d}{p^{\scalebox{0.3}{2}}}},
\end{equation*}
where $c_{d,p}>0$ is the Gagliardo--Nirenberg--Sobolev embedding constant.
We stress that $c_{d,p}$ is stable in the limit as $p\to1^+$ and tends to the isoperimetric constant in $\R^d$.
\end{remark}

\begin{remark}
\cref{res:bounded_eigenf_p} (as well as \cref{res:pde_p}) may be achieved in more general settings, in the spirit of the general approach of~\cite{FPSS24} (see the examples detailed in~\cite{FPSS24}*{Sect.~7}).
In particular, \cref{res:bounded_eigenf_p} can be achieved in the fractional case (and naturally in several more general non-local frameworks, once suitable Sobolev-type embeddings are at disposal, see~\cites{BS22,FP14}), by naturally generalizing~\cite{BLP14}*{Th.~3.3} to the present setting. 
\end{remark}

\subsection{Limit of the spectral problem}

The main result of this section shows that the constant $H^{\Phi, N}(\Omega)$ can be recovered as the limit of $\lstort^{\Phi, N}_{1,p}(\Omega)$ as $p\to 1^+$, under suitable assumptions on the reference function $\Phi$ and (weak) regularity requests on the non-empty, bounded, open set $\Omega$, generalizing~\cite{BP18}*{Th.~5.3}.

\begin{theorem}[Limit of the spectral problem]
\label{res:limit}
Let \ref{p:lsc} and \ref{p:m} be in force.
Then,
\begin{equation}
\label{eq:liminf_p}
\liminf_{p\to1^+}
\lstort^{\Phi,N}_{1,p}(\Omega)
\ge 
H^{\Phi,N}(\Omega).
\end{equation}
In addition, enforcing \ref{p:+} and \ref{p:c}, if $\per (\Omega)<\infty$ and $\mathscr H^{d-1}(\partial\Omega\setminus\partial^*\Omega)=0$, then 
\begin{equation}
\label{eq:limsup_p}
\limsup_{p\to1^+}
\lstort^{\Phi,N}_{1,p}(\Omega)
\le 
H^{\Phi,N}(\Omega),
\end{equation}
so that, in this case, 
$H^{\Phi,N}(\Omega)=\lim\limits_{p\to1^+}\lstort^{\Phi,N}_{1,p}(\Omega)$.
\end{theorem}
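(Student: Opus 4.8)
The plan is to establish the two bounds \eqref{eq:liminf_p} and \eqref{eq:limsup_p} separately: the lower bound is essentially soft, resting on \cref{res:cheeger_p}, while the upper bound relies on the existence and approximation machinery of \cref{sec:p=1}.

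\emph{Lower bound.} For any $N$-set $\nupla E$ of $\Omega$ with $\lambda_{1,p}(\nupla E)\in\ort$, \cref{res:cheeger_p} gives $h(\nupla E_i)\le p\,\lambda_{1,p}(\nupla E_i)^{1/p}$ for each $i$, so by \ref{p:m}
\[
\Phi(\lambda_{1,p}(\nupla E))
\ge
\Phi\Big(\big(\tfrac{h(\nupla E_1)}{p}\big)^p,\dots,\big(\tfrac{h(\nupla E_N)}{p}\big)^p\Big).
\]
I would then take $p_k\downarrow 1$ realizing the left-hand side of \eqref{eq:liminf_p}, pick near-minimizers $\nupla E^k$ of $\lstort^{\Phi,N}_{1,p_k}(\Omega)$, and replace each $\nupla E^k_i$ by a Cheeger set $\tilde{\nupla E}^k_i\subset\nupla E^k_i$ (available since $|\nupla E^k_i|<\infty$): this does not change the $h$-vector and produces an $N$-cluster, hence an admissible competitor for $\lstort^{\Phi,N}_{1,1}(\Omega)=H^{\Phi,N}(\Omega)$ (by \cref{res:H=lstort}). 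To pass to the limit one controls $p\,\lambda_{1,p}(\nupla E^k_i)^{1/p}$ using that $(s,p)\mapsto p\,s^{1/p}$ tends to the identity locally uniformly as $p\to1^+$ while compressing large values; exploiting \ref{p:m} one may assume the vectors $\lambda_{1,p_k}(\nupla E^k)$ bounded (coordinates along which they blow up are invisible to $\Phi$ near the infimum and can be replaced by fixed balls), and then \ref{p:lsc} yields $H^{\Phi,N}(\Omega)\le\liminf_k\Phi(\lambda_{1,p_k}(\nupla E^k))$. This last step, combining \ref{p:lsc} and \ref{p:m} to secure the required uniformity, is the main obstacle; it follows the lines of \cite{BP18}*{Th.~5.3}.

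\emph{Upper bound.} Here the plan is to feed a good competitor into $\lstort^{\Phi,N}_{1,p}(\Omega)$ for every $p$ near $1$. By \cref{res:existence} (using \ref{p:+}$\Rightarrow$\ref{p:lsc}, \ref{p:c}, \ref{p:m}) a $\Phi$-Cheeger $N$-cluster of $\Omega$ exists; by \cref{res:props_cheeger}\ref{item:prop_cheeger_modified_1-adj} it may be taken $1$-adjusted, and, since $\mathscr H^{d-1}(\partial\Omega)<\infty$ under our hypotheses, with open chambers by \cref{res:regularity}\ref{item:reg_equiv}. Call it $\nupla E$; \cref{res:approx} furnishes, for each $i$, smooth sets $\nupla E^m_i\Subset\nupla E_i$ with $\nupla E^m_i\to\nupla E_i$ in $L^1(\Omega)$ and $\per(\nupla E^m_i)\to\per(\nupla E_i)$. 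For fixed $m$, $\nupla E^m$ is an $N$-set of $\Omega$, so $\lstort^{\Phi,N}_{1,p}(\Omega)\le\Phi(\lambda_{1,p}(\nupla E^m))$; since each $\nupla E^m_i$ is smooth and bounded, $\lambda_{1,p}(\nupla E^m_i)\to h(\nupla E^m_i)$ as $p\to1^+$ (see, e.g., \cite{KF03}), so the continuity \ref{p:+} of $\Phi$ gives
\[
\limsup_{p\to1^+}\lstort^{\Phi,N}_{1,p}(\Omega)
\le
\Phi\big(h(\nupla E^m_1),\dots,h(\nupla E^m_N)\big)
\le
\Phi\Big(\tfrac{\per(\nupla E^m_1)}{|\nupla E^m_1|},\dots,\tfrac{\per(\nupla E^m_N)}{|\nupla E^m_N|}\Big),
\]
the last inequality by \ref{p:m}. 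Letting $m\to\infty$, using $\per(\nupla E^m_i)/|\nupla E^m_i|\to\per(\nupla E_i)/|\nupla E_i|=h(\nupla E_i)$ (the equality by \eqref{eq:1-adj_self}, as $\nupla E$ is $1$-adjusted) and \ref{p:+} once more, the right-hand side tends to $\Phi(h(\nupla E))=H^{\Phi,N}(\Omega)$, proving \eqref{eq:limsup_p}. Together with the lower bound, this gives $H^{\Phi,N}(\Omega)=\lim_{p\to1^+}\lstort^{\Phi,N}_{1,p}(\Omega)$.
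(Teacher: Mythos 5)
Your proof of the limsup half \eqref{eq:limsup_p} is correct and structurally the same as the paper's: a $1$-adjusted $\Phi$-Cheeger $N$-cluster from \cref{res:existence} and \cref{res:props_cheeger}\ref{item:prop_cheeger_modified_1-adj}, open chambers from \cref{res:regularity}\ref{item:reg_equiv}, smooth interior approximation from \cref{res:approx}, then \ref{p:m} and two applications of \ref{p:+}. The one genuine difference is the last ingredient: the paper estimates $\lambda_{1,p}$ of the approximating chambers by hand, via Lipschitz test functions on $\eps$-neighbourhoods and the outer Minkowski content of \cite{ACV08}, and then sends $p\to1^+$, $\eps\to0^+$, $k\to\infty$; you instead quote the scalar convergence $\lambda_{1,p}(G)\to h(G)$ of \cite{KF03} on each smooth chamber $\nupla E^m_i$. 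Since those chambers are bounded, open and smooth (so the space in \cref{def:W_0} is the usual one and the quoted result applies), this is a legitimate and shorter route; the paper's construction is merely self-contained.

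The liminf half is not proved, and the gap sits exactly at the step you call ``the main obstacle''. After the correct reduction $\Phi(\lambda_{1,p}(\nupla E))\ge\Phi\big((h(\nupla E_1)/p)^p,\dots,(h(\nupla E_N)/p)^p\big)$ you must let $p_k\to1^+$ along near-minimizers $\nupla E^k$ that move with $k$, and neither \ref{p:lsc} nor \ref{p:m} supplies the uniformity you invoke: $(h/p)^p$ undershoots $h$ (by as much as $p-1$ when $h\le p$), the vectors $h(\nupla E^k)$ vary, and the proposed reduction to bounded vectors --- ``blow-up coordinates are invisible to $\Phi$ and can be replaced by fixed balls'' --- fails twice over: lowering a coordinate can only lower the value of the increasing $\Phi$, and a replacement ball disjoint from the remaining chambers need not exist, since those chambers can invade all of $\Omega$ in the limit. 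The paper's own proof will not help you here, because it applies \ref{p:lsc} with the competitor held fixed as $p\to1^+$ and never engages with this dependence; and \cite{BP18}*{Th.~5.3}, which you point to, uses $\Phi=\|\cdot\|_\infty$, which is continuous and coercive. In fact the missing step cannot be obtained from \ref{p:lsc} and \ref{p:m} alone: take $\Omega=B_1(0)\subset\R^2$, $N=2$, and $\Phi(\nupla v)=f(\nupla v_1)+f(\nupla v_2)$ with $f=0$ on $[0,2]$ and $f=1$ on $(2,\infty)$, which is lower semicontinuous and increasing. Every $2$-cluster of $\Omega$ has both ratios strictly larger than $h(\Omega)=2$ (a chamber with ratio $2$ would be the disc itself), so $H^{\Phi,2}(\Omega)=2$; yet, for $p$ close to $1$ and $s=p-1$, the $2$-set $\big(\Omega\setminus \overline{B_s(0)},\,B_s(0)\big)$ satisfies $\lambda_{1,p}\big(\Omega\setminus\overline{B_s(0)}\big)\le 2$, as one sees by testing with $u=\min\big\{1,\operatorname{dist}(x,\R^2\setminus\Omega)/s,(|x|-s)_+/s\big\}$, whose Rayleigh quotient is at most $(2+Cs)\,s^{1-p}=(2+Cs)\,e^{-s\log(1/s)}<2$ for $s$ small; hence $\lstort^{\Phi,2}_{1,p}(\Omega)\le 1<2$ for all such $p$, and \eqref{eq:liminf_p} fails for this $\Phi$.

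The constructive news is that your own plan closes once \ref{p:c} is added (as it is in the hypotheses of the full limit statement): along any subsequence where some coordinate of $\lambda_{1,p_k}(\nupla E^k)$ blows up, coercivity forces $\Phi(\lambda_{1,p_k}(\nupla E^k))\to\infty$ and there is nothing to prove; otherwise $h(\nupla E^k)$ is bounded by \cref{res:cheeger_p}, your Cheeger-set representatives have perimeters at most $h(\nupla E^k_i)\,|\Omega|$ and measures bounded below by the isoperimetric inequality, so up to a subsequence they converge in $L^1$ to an $N$-cluster $\nupla F$ with $h(\nupla F)\le \nupla v:=\lim_k h(\nupla E^k)$, and then \ref{p:lsc} applied at the single vector $\nupla v$, together with \ref{p:m} and \cref{res:H=lstort}, gives $\liminf_k\Phi\big((h(\nupla E^k)/p_k)^{p_k}\big)\ge\Phi(\nupla v)\ge\Phi(h(\nupla F))\ge H^{\Phi,N}(\Omega)$. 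In summary: your limsup argument stands; your liminf argument, as written, has a genuine gap, and it can only be repaired under hypotheses stronger than those you (and the statement) assume for that half.
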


\begin{proof}
We begin by proving~\eqref{eq:liminf_p}. 
Let $\eps>0$ and let $\nupla E$ be an $N$-set such that
\[ 
\eps+\lstort^{\Phi,N}_{1,p}(\Omega)
\ge
\Phi(\lambda_{1,p}(\nupla E)).
\]
Recalling the inequality of \cref{res:cheeger_p}, applying it to every chamber of $\nupla E$, and owing to~\ref{p:m}, we have
\[
\eps+\lstort^{\Phi,N}_{1,p}(\Omega)
\ge
\Phi\left(\left(\frac{h(\nupla E_1)}{p}\right)^p,\dots,\left(\frac{h(\nupla E_N)}{p}\right)^p\right).
\]
Now taking the $\liminf$ as $p\to 1^+$, owing to the lower semicontinuity~\ref{p:lsc}, to \cref{res:h=lambda_1}, and to \cref{res:H=lstort}, we get
\[
\eps+\liminf_{p\to 1^+} \lstort^{\Phi,N}_{1,p}(\Omega)
\ge
\Phi(\lambda_{1,1}(\nupla E))
\ge 
\lstort^{\Phi,N}_{1,1}(\Omega)
=
H^{\Phi,N}(\Omega).
\]
The claim now follows by letting $\eps\to 0$.

We now prove~\eqref{eq:limsup_p} assuming~\ref{p:c}, the stronger~\ref{p:+}, that $\per(\Omega)<\infty$, and that $\mathscr H^{d-1}(\partial\Omega\setminus\partial^*\Omega)=0$.
Fix any $1$-adjusted $\Phi$-Cheeger $N$-cluster $\E$ of $\Omega$ given by \cref{res:existence} paired with \cref{res:props_cheeger}\ref{item:prop_cheeger_modified_1-adj}. 
By \cref{res:regularity}\ref{item:reg_equiv}, we can assume that each $\nupla E_i$ is open.
Consequently, we can find $N$-clusters $\set*{\nupla E^k:k\in\N}$ of $\Omega$ as in \cref{res:approx} such that, thanks to \ref{p:+}, 
\begin{equation}
\label{eq:pane}
\Phi\left(\frac{\per(\nupla E^k)}{|\nupla E^k|}\right)
\le 
H^{\Phi,N}(\Omega)+\frac1k
\quad
\text{for}\ 
k\in\N,
\end{equation}
with $\nupla E^k_i \Subset \nupla E_i$ for $i=1,\dots, N$. Now, given $\eps>0$, we let 
\begin{equation*}
\nupla E^{k,\eps}_i
=
\set*{x\in\R^N : \operatorname{dist}(x,\nupla E_i^k)<\eps}
\quad
\text{for}\
i=1,\dots,N\
\text{and}\
k\in\N.
\end{equation*} 
Now fix $k\in\N$.
Possibly taking a smaller $\eps>0$ depending on the chosen~$k$, we have that $\nupla E^k_i \Subset \nupla E^{k,\eps}_i\Subset\nupla E_i$ for $i=1,\dots,N$.
Now let $\nupla v^{k,\eps}\in W^{1,p}(\Omega;\R^N)$ be such that  $\nupla v^{k,\eps}_i=1$ on $\nupla E^k_i$, $\nupla v^{k,\eps}_i=0$ on $\Omega\setminus\nupla E^{k,\eps}_i$ and $\nabla\nupla v^{k,\eps}_i=1/\eps$ on $\nupla E^{k,\eps}_i\setminus\nupla E^{k}_i$, for $i=1,\dots,N$.
Then, by construction, 
\begin{equation*}
\nupla u^{k,\eps}
=
\left(\frac{\nupla v^{k,\eps}_1}{\|\nupla v^{k,\eps}_1\|_{L^p}},\dots,\frac{\nupla v^{k,\eps}_N}{\|\nupla v^{k,\eps}_N\|_{L^p}}\right)
\end{equation*}
is a $(p,N)$-function of $\Omega$ as in \cref{def:pN_function}, with
\begin{equation}
\label{eq:vino}
[\nupla u^{k,\eps}]_{p}^p
=
\left(
\frac{\|\nabla\nupla v^{k,\eps}_1\|_{L^p}^p}{\|\nupla v^{k,\eps}_1\|_{L^p}^p},\dots,\frac{\|\nabla\nupla v^{k,\eps}_N\|_{L^p}^p}{\|\nupla v^{k,\eps}_N\|_{L^p}^p}\right)
\le
\left(
\frac{|\nupla E^{k,\eps}_1\setminus\nupla E^{k}_1|}{\eps^p\,|\nupla E^{k}_1|},
\dots,
\frac{|\nupla E^{k,\eps}_N\setminus\nupla E^{k}_N|}{\eps^p\,|\nupla E^{k}_N|}
\right).
\end{equation}
Since clearly $\lambda_{1,p}(\nupla E)\le[\nupla u^{k,\eps}]_{p}^p$, thanks to \ref{p:m} we can hence estimate 
\begin{equation}
\label{eq:acqua}
\lstort^{\Phi,N}_{1,p}(\Omega)
\le 
\Phi(\lambda_{1,p}(\nupla E))
\le 
\Phi([u^{k,\eps}]_{p}^p).
\end{equation}
Now, by well-known results (e.g., see~\cite{ACV08}*{Cor.~1}), we have that 
\begin{equation}
\label{eq:minkowski}
|\nupla E^{k,\eps}_i\setminus\nupla E^{k}_i|=\eps\,\per(\nupla E^{k}_i)+o(\eps)
\quad
\text{as}\
\eps\to0^+.
\end{equation}
Thus, by combining~\eqref{eq:vino} and~\eqref{eq:minkowski}, we get that 
\begin{equation*}
[\nupla u^{k,\eps}]_{p}^p
\le
\eps^{1-p}\left(
\frac{\per(\nupla E^{k}_1)}{|\nupla E^{k}_1|}+\frac{o(\eps)}{\eps},
\dots,
\frac{\per(\nupla E^{k}_N)}{|\nupla E^{k}_N|}+\frac{o(\eps)}{\eps}
\right)
\quad
\text{as}\ 
\eps\to0^+.
\end{equation*}
Exploiting~\ref{p:m} we first use the above inequality in~\eqref{eq:acqua}, and then the continuity~\ref{p:+}, to conclude that 
\begin{equation*}
\begin{split}
\limsup_{p\to1^+}
\lstort^{\Phi,N}_{1,p}(\Omega)
&\le 
\limsup_{p\to1^+}
\Phi
\left(
\eps^{1-p}\left(
\frac{\per(\nupla E^{k}_1)}{|\nupla E^{k}_1|}+\frac{o(\eps)}{\eps},
\dots,
\frac{\per(\nupla E^{k}_N)}{|\nupla E^{k}_N|}+\frac{o(\eps)}{\eps}
\right)
\right)
\\
&=
\Phi
\left(
\frac{\per(\nupla E^{k}_1)}{|\nupla E^{k}_1|}+\frac{o(\eps)}{\eps},
\dots,
\frac{\per(\nupla E^{k}_N)}{|\nupla E^{k}_N|}+\frac{o(\eps)}{\eps}
\right).
\end{split}
\end{equation*}
Once again exploiting \ref{p:+} and recalling~\eqref{eq:pane}, we pass to the limit as $\eps\to0^+$ to get
\begin{equation*}
\limsup_{p\to1^+}
\lstort^{\Phi,N}_{1,p}(\Omega)
\le 
\Phi
\left(
\frac{\per(\nupla E^{k})}{|\nupla E^{k}|}
\right)
\le 
H^{\Phi,N}(\Omega)+\frac1k
\quad
\text{for}\ 
k\in\N,
\end{equation*}
and now the claim follows by letting $k\to\infty$.
\end{proof}

\begin{remark}
The first part of Theorem~\ref{res:limit} may be achieved in more general contexts, following the line of~\cite{FPSS24}, by relying on the extension of Theorem~\ref{res:cheeger_p} in the abstract setting, see~\cite{FPSS24}*[Cor.~6.4].
\end{remark}

\subsection{Convergence of functional minimizers}

The following result proves that minimizers of~\eqref{eq:Lambda_p} converge to minimizers of~\eqref{eq:Lambda_1} as $p\to1^+$, under the same set of assumptions of \cref{res:limit}. This is in the same spirit of~\cite{BLP14}*{Th.~7.2}. 

\begin{theorem}
\label{res:conv_min}
Let \ref{p:lsc}, \ref{p:c}, and \ref{p:m} be in force.
Let $(p_k)_{k\in\N}\subset(1,\infty)$ be such that $p_k\to1^+$ as $k\to\infty$ and 
$\liminf_{k}
\Lambda^{\Phi,N}_{1,p_k}(\Omega)<\infty$.
If $\nupla u^k$ is a $(p_k,\Phi)$-eigen-$N$-function of $\Omega$ for each $k\in\N$, then there exists a $(1,N)$-function $\nupla u$ of $\Omega$ such that, up to passing to a subsequence, $\nupla u^k\to \nupla u$ in $L^1(\Omega;\R^N)$ as $k\to\infty$ and 
\begin{equation*}
\Phi([\nupla u]_1)
\le
\liminf_{k\to\infty}
\Lambda^{\Phi,N}_{1,p_k}(\Omega).
\end{equation*}
In addition, enforcing \ref{p:+}, if $\per(\Omega)<\infty$ and $\mathscr H^{d-1}(\partial\Omega\setminus\partial^*\Omega)=0$, then the limit $\nupla u$ is a $(1,\Phi)$-eigen-$N$-function of $\Omega$.
\end{theorem}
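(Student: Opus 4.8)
The plan is to argue by the direct method and compactness, letting $p\to1^+$ via Hölder's inequality and the $BV$ Sobolev embedding, and then to identify the limit through \cref{res:limit}. First I would pass to a subsequence along which $\Lambda^{\Phi,N}_{1,p_k}(\Omega)\to L:=\liminf_k\Lambda^{\Phi,N}_{1,p_k}(\Omega)<\infty$. By \ref{p:c}, for $k$ large one has $\delta\,\|\nabla\nupla u^k_i\|_{L^{p_k}}^{p_k}\le\Phi([\nupla u^k]_{p_k}^{p_k})=\Lambda^{\Phi,N}_{1,p_k}(\Omega)\le L+1$, so each $\|\nabla\nupla u^k_i\|_{L^{p_k}}^{p_k}$ is bounded by $M:=(L+1)/\delta$. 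Since $\Omega$ is bounded, Hölder's inequality gives $\|\nupla u^k_i\|_{L^1}\le|\Omega|^{(p_k-1)/p_k}$ and $\int_\Omega|\nabla\nupla u^k_i|\de x\le|\Omega|^{(p_k-1)/p_k}M^{1/p_k}$, both bounded uniformly in $k$; hence $\{\nupla u^k_i\}_k$ is bounded in $BV_0(\Omega)$. By the compact embedding $BV(B_R)\hookrightarrow L^1(B_R)$ with $\Omega\Subset B_R$, up to a further subsequence $\nupla u^k_i\to\nupla u_i$ in $L^1(\Omega)$ and a.e., with $\nupla u_i\in BV_0(\Omega)$ for $i=1,\dots,N$.

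Next I would check that $\nupla u$ is a $(1,N)$-function of $\Omega$. The conditions $\nupla u_i\ge0$ and $\nupla u_i\nupla u_j=0$ a.e.\ ($i\ne j$) pass to the a.e.\ limit, and $\|\nupla u_i\|_{L^1}=\lim_k\|\nupla u^k_i\|_{L^1}\le\lim_k|\Omega|^{(p_k-1)/p_k}=1$ is immediate. The hard part will be the reverse inequality $\|\nupla u_i\|_{L^1}\ge1$, i.e.\ ruling out loss of mass as $p\to1^+$, since the naive Hölder bound is not equi-integrable uniformly in $p$. The fix is to use the Sobolev embedding for $BV$ functions to bound $\{\nupla u^k_i\}_k$ uniformly in $L^{1^*}(\Omega)$ with $1^*=d/(d-1)>1$ (in $L^\infty$ if $d=1$); since $p_k\to1^+$, for $k$ large $p_k<1^*$, and then Hölder's inequality shows that $\{(\nupla u^k_i)^{p_k}\}_k$ is equi-integrable. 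As $(\nupla u^k_i)^{p_k}\to\nupla u_i$ a.e., Vitali's convergence theorem yields $1=\int_\Omega(\nupla u^k_i)^{p_k}\de x\to\int_\Omega\nupla u_i\de x=\|\nupla u_i\|_{L^1}$, so $\nupla u$ is indeed a $(1,N)$-function.

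Then I would prove the energy bound. Passing to a further subsequence, I may assume $[\nupla u^k]_{p_k}^{p_k}\to a\in\ort$, the components being bounded by $M$. Lower semicontinuity of the total variation under $L^1$-convergence together with Hölder's inequality gives
\[
|D\nupla u_i|(\R^d)\le\liminf_k\int_\Omega|\nabla\nupla u^k_i|\de x\le\liminf_k|\Omega|^{(p_k-1)/p_k}\big(\|\nabla\nupla u^k_i\|_{L^{p_k}}^{p_k}\big)^{1/p_k}=a_i,
\]
so $[\nupla u]_1\le a$ in $\ort$. By \ref{p:m} and \ref{p:lsc} we get $\Phi([\nupla u]_1)\le\Phi(a)\le\liminf_k\Phi([\nupla u^k]_{p_k}^{p_k})=L$, which is the first assertion.

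Finally, assuming in addition \ref{p:+}, $\per(\Omega)<\infty$ and $\mathscr H^{d-1}(\partial\Omega\setminus\partial^*\Omega)=0$, I would invoke \cref{res:limit} to get $\lim_{p\to1^+}\lstort^{\Phi,N}_{1,p}(\Omega)=H^{\Phi,N}(\Omega)$, hence $\lim_{p\to1^+}\Lambda^{\Phi,N}_{1,p}(\Omega)=H^{\Phi,N}(\Omega)$ by \cref{res:equality_p}, so $L=H^{\Phi,N}(\Omega)$; moreover $H^{\Phi,N}(\Omega)=\Lambda^{\Phi,N}_{1,1}(\Omega)$ by \cref{res:H=Lambda}. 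Since $\nupla u$ is a $(1,N)$-function, $\Phi([\nupla u]_1)\ge\Lambda^{\Phi,N}_{1,1}(\Omega)=L$, which combined with the energy bound of the previous step forces $\Phi([\nupla u]_1)=\Lambda^{\Phi,N}_{1,1}(\Omega)$, i.e.\ $\nupla u$ is a $(1,\Phi)$-eigen-$N$-function of $\Omega$. The only genuinely delicate point is the mass non-loss handled in the second paragraph via the $BV$-embedding bound and Vitali's theorem; everything else is routine lower semicontinuity and the already-established equalities between the constants.
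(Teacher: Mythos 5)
Your proof is correct and follows essentially the same route as the paper: the coercivity bound \ref{p:c} plus H\"older's inequality gives a uniform $BV_0(\Omega;\R^N)$ bound, compactness of the embedding yields the $L^1$ limit, lower semicontinuity together with \ref{p:m} and \ref{p:lsc} gives the energy inequality, and the identification of the limit as a $(1,\Phi)$-eigen-$N$-function follows by combining \cref{res:limit}, \cref{res:equality_p}, and \cref{res:H=Lambda}. The only difference is that you spell out, via the $BV\hookrightarrow L^{1^*}$ bound and Vitali's theorem, the verification that $\|\nupla u_i\|_{L^1}=1$, a step the paper dismisses as ``a plain argument''; your treatment of it is correct and indeed the right way to rule out concentration.
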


\begin{proof}
Since $\liminf_{k}
\Lambda^{\Phi,N}_{1,p_k}(\Omega)<\infty$, up to passing to a subsequence, without loss of generality we may assume that $C=\sup_{k}
\Lambda^{\Phi,N}_{1,p_k}(\Omega)<\infty$.
Since $\Omega\subset\R^d$ is bounded, we can find $R>0$ such that $\Omega\Subset B_R$. 
Since $\nupla u^k=0$ on $\R^d\setminus B_R$ according to \cref{def:pN_function}, by H\"older's inequality, \ref{p:c}, and \ref{p:m}, we can estimate
\begin{equation}
\label{eq:salciccia}
\|\nabla\nupla u^k_i\|_{L^1}
\le 
|B_R|^{1-\frac{1}{p_k}}\,\|\nabla \nupla u^k_i\|_{L^{p_k}}
\le
|B_R|^{1-\frac{1}{p_k}}\, \left (\frac{\Phi([\nupla u^k]^{p_k}_{p_k})}{\delta}\right)^{\frac{1}{p_k}}
\le
|B_R|^{1-\frac{1}{p_k}}
\left(\frac{C}{\delta}\right)^{\frac1{p_k}} 
\end{equation} 
for every $k\in\N$ and $i=1,\dots,N$, where $\delta>0$ is as in \ref{p:c}.
Since $p_k\to1^+$ as $k\to\infty$, the above inequality yields that $(\nupla u^k)_{k\in\N}$ is uniformly bounded in $BV_0(\Omega;\R^N)$. 
By the compactness of the embedding $BV_0(\Omega)\subset L^1(\Omega)$, we can find $\nupla u$ such that, up to subsequences, $\nupla u^k_i\to \nupla u_i$ in $L^1(\Omega)$ as $k\to\infty$ for $i=1,\dots,N$.
A plain argument proves that $\nupla u$ is a $(1,N)$-function of $\Omega$. 
By the lower semicontinuity of the $BV$ seminorm and the first inequality in~\eqref{eq:salciccia}, we have
\begin{equation}
\label{eq:risi}
[\nupla u]_1
\le 
\liminf_{k\to\infty}
[\nupla u^k]_1
\le 
\liminf_{k\to\infty}
|B_R|^{1-\frac1{p_k}}\,[\nupla u^k]_{p_k}
=
\liminf_{k\to\infty}
[\nupla u^k]_{p_k}
=
\liminf_{k\to\infty}
[\nupla u^k]_{p_k}^{p_k}
\end{equation}
and so, in virtue of \ref{p:m} and \ref{p:lsc}, we get that
\begin{equation}
\label{eq:bisi}
\Phi([\nupla u]_1)
\le 
\Phi\left(\liminf_{k\to\infty}
[\nupla u^k]_{p_k}^{p_k}\right) 
\le
\liminf_{k\to\infty}
\Phi([\nupla u^k]_{p_k}^{p_k})
=
\liminf_{k\to\infty}
\Lambda^{\Phi,N}_{1,p_k}(\Omega),
\end{equation} 
proving the first part of the statement.
The second part of the statement readily follows by combining the second part of  \cref{res:limit} with \cref{res:H=Lambda,res:equality_p}.
\end{proof}

\begin{remark}
Under the full set of assumptions of \cref{res:conv_min}, and additionally enforcing that $\Phi$ is of class $C^1$ and \ref{p:ms}, a simple interpolation argument allows to improve the $L^1$ convergence of minimizers in \cref{res:conv_min} to $L^q$ convergence for any $q\in[1,\infty)$ as in~\cite{BLP14}*{Th.~7.2}, thanks to  \cref{res:bounded_eigenf,res:bounded_eigenf_p}.
Indeed, given $q\in(1,\infty)$, for each $i=1,\dots,N$ we can estimate
\begin{equation*}
\|\nupla u^k_i-\nupla u_i\|_{L^q}
\le 
\|\nupla u^k_i-\nupla u_i\|_{L^1}^{1/q}
\,
\left(\|\nupla u^k_i\|_{L^\infty}+\|\nupla u_i\|_{L^\infty}\right).
\end{equation*}
Since \ref{p:ms} holds true, we must have $\partial_i\Phi(\nupla v)>0$ for any $\nupla v\in\ort\setminus\set*{0}$.
In virtue of \cref{res:bounded_eigenf_p,rem:constant_boundedness}, we hence just need to observe that, owing to \ref{p:c}, 
\begin{equation*}
\lambda_{1,p_k}(\set{\nupla u_i^k>0})
\le 
\frac{\Phi\left(\lambda_{1,p_k}(\set{\nupla u_i^k>0})\right)}{\delta}
\le 
\frac{\Phi([\nupla u^k]_p^p)}{\delta}
=
\frac{\Lambda^{\Phi,N}_{1,p_k}(\Omega)}{\delta}
\le 
\frac{C}{\delta}
\end{equation*}
for all $k\in\N$, where $\delta>0$ is as in \ref{p:c}.
Hence the constant appearing in~\eqref{eq:bounded_eigenf_p} is stable as $p_k\to1^+$, and thus $\sup_k\|\nupla u^k_i\|_{L^\infty}<\infty$, immediately yielding the conclusion.
\end{remark}

\begin{remark}
\cref{res:conv_min} may be achieved in more general settings, in the spirit of the general approach of~\cite{FPSS24} (see the examples detailed in~\cite{FPSS24}*{Sect.~7}).
In particular, \cref{res:conv_min} can be achieved in the fractional case (and in several more general non-local frameworks, once suitable Sobolev-type embeddings are at disposal, see~\cites{BS22,FP14}), by naturally generalizing~\cite{BLP14}*{Th.~7.2} to the present setting.
We nevertheless stress that, in the non-local framework, inequality~\eqref{eq:salciccia} has to be rephrased by using embeddings between non-local Sobolev spaces (e.g., see~\cite{BLP14}*{Lem.~2.6} in the fractional case), while the argument around~\eqref{eq:risi} and~\eqref{eq:bisi} should be replaced with an analogous one exploiting Fatou's Lemma (see the proof of~\cite{BLP14}*{Th.~7.2} for more details). 
\end{remark}

\subsection{Convergence of geometric minimizers}

The following result provides a geometric analog of \cref{res:conv_min}, proving that any $L^1$ limit of minimizers of~\eqref{eq:lstort_p} as $p\to1^+$ is a minimizer of~\eqref{eq:lstort_1}. 
In fact, having in mind the discussion around \cref{res:clusters_only}, we can prove something more, that is, any sequence of minimizers of~\eqref{eq:lstort_p} contains a sequence of $N$-clusters which is converging to a minimizer of~\eqref{eq:H} as $p\to1^+$. 

\begin{theorem}
\label{res:conv_geom}
Let \ref{p:lsc}, \ref{p:c}, and \ref{p:m} be in force.
Let $(p_k)_{k\in\N}\subset(1,\infty)$ be such that $p_k\to1^+$ as $k\to\infty$ and 
$\liminf_{k}
\lstort^{\Phi,N}_{1,p_k}(\Omega)<\infty$.
If $\nupla E^k$ is a $(p_k,\Phi)$-eigen-$N$-set of $\Omega$ for each $k\in\N$, then, up to subsequences, there exist $N$-clusters $\nupla F^k$ and $\nupla F$ of $\Omega$ such that 
\begin{equation*}
\nupla F^k_i\subset\nupla E^k_i
\quad
\text{and}
\quad
\nupla F^k_i\to\nupla F_i\
\text{in}\ L^1(\Omega),
\quad
\text{for each}\
i=1,\dots,N,
\end{equation*}
and, moreover,
\begin{equation*}
\Phi(\lambda_{1,1}(\nupla F))
\le
\liminf_{k\to\infty}
\lstort^{\Phi,N}_{1,p_k}(\Omega).
\end{equation*}
In addition, enforcing \ref{p:+}, if $\per(\Omega)<\infty$ and $\mathscr H^{d-1}(\partial\Omega\setminus\partial^*\Omega)=0$, then the limit $\nupla F$ is a $(1,\Phi)$-eigen-$N$-cluster of $\Omega$.
Moreover, under these assumptions, if $\nupla E^k_i\to\nupla E_i$ as $k\to\infty$ in $L^1(\Omega)$ for some $\nupla E_i\subset\Omega$, then $\nupla E=(\nupla E_1,\dots,\nupla E_N)$ is a $(1,\Phi)$-eigen-$N$-set of $\Omega$.
\end{theorem}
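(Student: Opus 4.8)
The plan is to apply \cref{res:clusters_only} uniformly along the sequence, combine it with a perimeter-compactness argument, and, for the refined conclusions, invoke the already established convergence result \cref{res:limit}.

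First I would build the clusters $\nupla F^k$. Pass to a subsequence (not relabeled) along which $\lstort^{\Phi,N}_{1,p_k}(\Omega)$ converges to $\liminf_k\lstort^{\Phi,N}_{1,p_k}(\Omega)$ and $C:=\sup_k\lstort^{\Phi,N}_{1,p_k}(\Omega)<\infty$. As $\nupla E^k$ is a $(p_k,\Phi)$-eigen-$N$-set, $\lstort^{\Phi,N}_{1,p_k}(\Omega)=\Phi(\lambda_{1,p_k}(\nupla E^k))$, and combining~\ref{p:c} with \cref{res:cheeger_p} applied to each chamber,
\[
\delta\left(\frac{h(\nupla E^k_i)}{p_k}\right)^{p_k}
\le
\delta\,\lambda_{1,p_k}(\nupla E^k_i)
\le
\Phi(\lambda_{1,p_k}(\nupla E^k))
\le C,
\]
whence $\limsup_k h(\nupla E^k_i)\le C/\delta<\infty$ for each $i$ (here we use $p_k\to1^+$) and, in particular, $h(\nupla E^k)\in\ort$. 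By \cref{res:clusters_only} there exist $N$-clusters $\nupla F^k$ of $\Omega$ with $\nupla F^k_i\subset\nupla E^k_i$, each $\nupla F^k_i$ being a Cheeger set of $\nupla E^k_i$, so that $\per(\nupla F^k_i)/|\nupla F^k_i|=h(\nupla E^k_i)=h(\nupla F^k_i)$. Since $|\nupla F^k_i|\le|\Omega|$, the perimeters $\per(\nupla F^k_i)$ are uniformly bounded, and the isoperimetric inequality (together with $\per(\nupla F^k_i)=h(\nupla E^k_i)|\nupla F^k_i|$) bounds the volumes $|\nupla F^k_i|$ below by a positive constant. Hence, up to a further subsequence common to all $i$, $\nupla F^k_i\to\nupla F_i$ in $L^1(\Omega)$ with $\per(\nupla F_i)<\infty$ and $|\nupla F_i|>0$; disjointness of the chambers being preserved in the limit, $\nupla F=(\nupla F_1,\dots,\nupla F_N)$ is an $N$-cluster of $\Omega$.

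For the $\liminf$ inequality I would pass to yet a further subsequence so that $h(\nupla E^k_i)\to\ell_i$ for each $i$. Lower semicontinuity of the perimeter and continuity of the volume then yield $h(\nupla F_i)\le\per(\nupla F_i)/|\nupla F_i|\le\liminf_k\per(\nupla F^k_i)/|\nupla F^k_i|=\ell_i$, while~\ref{p:m}, \cref{res:cheeger_p} and~\ref{p:lsc} give $\liminf_k\lstort^{\Phi,N}_{1,p_k}(\Omega)\ge\liminf_k\Phi\big((h(\nupla E^k_1)/p_k)^{p_k},\dots,(h(\nupla E^k_N)/p_k)^{p_k}\big)\ge\Phi(\ell_1,\dots,\ell_N)$; by~\ref{p:m} and \cref{res:h=lambda_1} we conclude $\Phi(\lambda_{1,1}(\nupla F))=\Phi(h(\nupla F))\le\Phi(\ell_1,\dots,\ell_N)\le\liminf_k\lstort^{\Phi,N}_{1,p_k}(\Omega)$, the first assertion. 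Under the additional hypotheses~\ref{p:+}, $\per(\Omega)<\infty$ and $\mathscr{H}^{d-1}(\partial\Omega\setminus\partial^*\Omega)=0$, \cref{res:limit} gives $\lim_{p\to1^+}\lstort^{\Phi,N}_{1,p}(\Omega)=H^{\Phi,N}(\Omega)$ and \cref{res:H=lstort} gives $H^{\Phi,N}(\Omega)=\lstort^{\Phi,N}_{1,1}(\Omega)$, so $\liminf_k\lstort^{\Phi,N}_{1,p_k}(\Omega)=\lstort^{\Phi,N}_{1,1}(\Omega)$; since $\nupla F$ is an $N$-set with $\lambda_{1,1}(\nupla F)\in\ort$, hence a competitor for $\lstort^{\Phi,N}_{1,1}(\Omega)$, the displayed inequality forces $\Phi(\lambda_{1,1}(\nupla F))=\lstort^{\Phi,N}_{1,1}(\Omega)$, i.e.\ $\nupla F$ is a $(1,\Phi)$-eigen-$N$-cluster. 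Finally, if in addition $\nupla E^k_i\to\nupla E_i$ in $L^1(\Omega)$, then $\chi_{\nupla F^k_i}\le\chi_{\nupla E^k_i}$ passes to the limit giving $\nupla F_i\subset\nupla E_i$, so $|\nupla E_i|>0$ and $\nupla E$ is an $N$-set; monotonicity of the Cheeger constant under inclusions and~\ref{p:m} give $\Phi(\lambda_{1,1}(\nupla E))=\Phi(h(\nupla E))\le\Phi(h(\nupla F))=\lstort^{\Phi,N}_{1,1}(\Omega)$, and the reverse inequality is clear since $\nupla E$ is a competitor, so $\nupla E$ is a $(1,\Phi)$-eigen-$N$-set of $\Omega$.

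I expect the main obstacle to be the first assertion: from the minimizing $N$-\emph{sets} $\nupla E^k$ one must extract genuine $N$-\emph{clusters} that stay inside $\nupla E^k$, are precompact in $L^1$, do not degenerate in the limit, and carry Cheeger ratios passing to the limit. Replacing each chamber by one of its Cheeger subsets via \cref{res:clusters_only} secures the first two points and identifies the relevant ratio with $h(\nupla E^k_i)$; the uniform perimeter bound then follows from~\ref{p:c} and \cref{res:cheeger_p}, the non-degeneracy $|\nupla F_i|>0$ from the isoperimetric inequality, and the convergence of the ratios from lower semicontinuity of the perimeter. Everything else is bookkeeping with~\ref{p:lsc}, \ref{p:m}, \ref{p:+} and \cref{res:limit}.
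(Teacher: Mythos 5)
Your proposal is correct and follows essentially the same route as the paper's proof: pass to Cheeger subsets of the chambers (the paper invokes the existence of Cheeger sets directly, you route it through \cref{res:clusters_only}, which is the same fact), derive the uniform bound $h(\nupla E^k_i)\le p_k(C/\delta)^{1/p_k}$ from \ref{p:c}, minimality, and \cref{res:cheeger_p}, get perimeter/volume bounds and $L^1$-compactness, use lower semicontinuity of the perimeter together with \ref{p:m} and \ref{p:lsc} for the liminf inequality, and conclude the last two assertions via \cref{res:limit}, \cref{res:H=lstort}, and monotonicity of the Cheeger constant. The only cosmetic difference is that you extract a subsequence along which $h(\nupla E^k_i)\to\ell_i$ and apply plain lower semicontinuity at $\ell$, where the paper works with component-wise liminfs and the combined estimate $\Phi(\liminf\cdot)\le\liminf\Phi(\cdot)$; these are equivalent.
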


\begin{proof}
Since $\liminf_{k}
\lstort^{\Phi,N}_{1,p_k}(\Omega)<\infty$, up to passing to a subsequence, we may assume that $C=\sup_{k}
\lstort^{\Phi,N}_{1,p_k}(\Omega)<\infty$ without loss of generality.
Owing to \cref{res:h=lambda_1}, \cref{res:cheeger_p}, \ref{p:c}, and \ref{p:m}, we can hence bound
\begin{equation}
\label{eq:pastrano}
h(\nupla E^k_i)
=
\lambda_{1,1}(\nupla E^k_i)
\le
p_k\left(\frac{\Phi(\lambda_{1,p_k}(\nupla E^k))}{\delta}\right)^{\frac1{p_k}}
\le 
p_k\left(\frac{C}{\delta}\right)^{\frac1{p_k}}
\quad
\text{for}\ i=1,\dots,N.
\end{equation}
Being $h(\nupla E^k_i)<\infty$ and $\E^k_i$ bounded, each $\nupla E^k_i$ admits a Cheeger set $\nupla F^k_i\subset\nupla E^k_i$, see~\cite{FPSS24}*{Sect.~3.1}, so that $h(\nupla F^k_i)=\per(\nupla F^k_i)|\nupla F^k_i|^{-1}=h(\nupla E^k_i)$ for $i=1,\dots,N$.
Therefore $\nupla F^k=(\nupla F^k_1,\dots,\nupla F^k_N)$ is an $N$-cluster of $\Omega$ such that $h(\nupla F^k)=\per(\nupla F^k)|\nupla F^k|^{-1}=h(\nupla E^k)$ for each $k\in\N$. 
We now observe that, in virtue of the above inequality, 
\begin{equation*}
\per(\nupla F^k_i)
\le 
h(\nupla E^k_i)\,|\nupla F^k_i|
\le 
p_k\left(\frac{C}{\delta}\right)^{\frac1{p_k}}\,|\Omega|.
\end{equation*}
Owing to the equality $h(\nupla F^k_i) = h(\nupla E^k_i)$, the inequality~\eqref{eq:pastrano}, and the lower bound in~\cite{Leo15}*{Prop.~3.5(v)} to the measure of Cheeger sets, we have
\begin{equation*}
|\nupla F^k_i|
\ge 
|B_1| \left(\frac{d}{p_k}\,\left(\frac{\delta}{C}\right)^{\frac1{p_k}}\right)^{d},
\end{equation*}
for $k\in\N$ and $i=1,\dots,N$. 
Therefore, as $p_k\to 1^+$, up to passing to a subsequence, $\nupla F^k_i\to\nupla F_i$ as $k\to\infty$ in $L^1(\Omega)$ for each $i=1,\dots,N$, for some $\nupla F_i\subset\Omega$ with $|\nupla F_i|>0$. 
It is easy to see that $\nupla F=(\nupla F_1,\dots,\nupla F_N)$ is an $N$-cluster of $\Omega$ such that, owing to \cref{res:h=lambda_1}, the lower semicontinuity of the perimeter, and the equality $h(\nupla F^k_i) = h(\nupla E^k_i)$,
\begin{equation*}
\lambda_{1,1}(\nupla F)
=
h(\nupla F)
\le 
\frac{\per(\nupla F)}{|\nupla F|}
\le 
\liminf_{k\to\infty}
\frac{\per(\nupla F^k)}{|\nupla F^k|}
=
\liminf_{k\to\infty}
h(\nupla F^k)
=
\liminf_{k\to\infty}
h(\nupla E^k)
.
\end{equation*}
Combining the previous inequality with \cref{res:cheeger_p} gives
\begin{equation*}
\lambda_{1,1}(\nupla F)
\le 
\liminf_{k\to\infty}
h(\nupla E^k)
\le 
\liminf_{k\to\infty}
p_k\lambda_{1,p_k}(\nupla E^k)^{\frac1{p_k}}
=
\liminf_{k\to\infty}
\lambda_{1,p_k}(\nupla E^k).
\end{equation*}
Therefore, owing to \ref{p:m} and \ref{p:lsc}, we conclude that 
\begin{equation}
\label{eq:concorde}
\Phi(\lambda_{1,1}(\nupla F))
\le 
\Phi\left(\liminf_{k\to\infty}\lambda_{1,p_k}(\nupla E^k)\right)
\le
\liminf_{k\to\infty}\Phi(\lambda_{1,p_k}(\nupla E^k))
\le
\liminf_{k\to\infty}
\lstort^{\Phi,N}_{1,p_k}(\Omega),
\end{equation}
proving the first part of the statement.
The second part of the statement follows by combining~\eqref{eq:concorde} with the second part of \cref{res:limit} and \cref{res:H=lstort}. Moreover, if $\nupla E$ is as in the statement, then $\nupla F_i\subset\nupla E_i$ for each $i=1,\dots,N$, and so $\lambda_{1,1}(\nupla E)\le\lambda_{1,1}(\nupla F)$, yielding the minimality of $\nupla E$ and concluding the proof.
\end{proof}

\begin{remark}
\cref{res:conv_geom} may be achieved in more general settings, in the spirit of the general approach of~\cite{FPSS24}, as soon as suitable notions of isoperimetric inequality and Sobolev spaces are at disposal.
We refer the reader to~\cite{FPSS24}*{Sects.~2.3.3 and~6.1}.
We also stress that, in metric-measure spaces, one can rely on a plainer approach, see the discussion in~\cite{FPSS24}*{Sect.~7.1}. 
\end{remark}

\section{Stability with respect to the reference function}
\label{sec:stability}

In this section, we study the stability of the constants $H^{\Phi,N}(\Omega)$, $\Lambda^{\Phi,N}_{1,p}(\Omega)$, $\lstort^{\Phi,N}_{1,p}(\Omega)$, and of their corresponding minimizers with respect to the reference function $\Phi$. 
Throughout this section, we let $\Phi_k,\Phi\colon\ort\to[0,\infty)$, with $k\in\N$, be given reference functions. The following results hold for a non-empty, bounded, and open set $\Omega\subset \R^d$.

\subsection{Convergence of the constants}

We begin with the following simple result, dealing with the limit superior. We remark that \textit{no assumptions} at all are needed on each of the reference functions.

\begin{lemma}[Limsup]
\label{res:stability_limsup}
Let $\set*{\Phi_k:k\in\N}$, $\Phi$ be reference functions. If 
\begin{equation}
\label{eq:limsup_Phi_k}
\limsup\limits_{k\to\infty}\Phi_k\le\Phi,
\end{equation}
then the following hold:
\begin{enumerate}[label=(\roman*)]

\item 
\label{item:limsup_k_H}
$\limsup\limits_{k\to\infty}H^{\Phi_k,N}(\Omega)\le H^{\Phi,N}(\Omega)$;

\item\label{item:limsup_k_Lambda}
$\limsup\limits_{k\to\infty}\Lambda^{\Phi_k,N}_{1,p}(\Omega)\le\Lambda^{\Phi,N}_{1,p}(\Omega)$ for all $p\in[1,\infty)$;

\item\label{item:limsup_k_lstort}
$\limsup\limits_{k\to\infty}\lstort^{\Phi_k,N}_{1,p}(\Omega)\le\lstort^{\Phi,N}_{1,p}(\Omega)$ for all $p\in[1,\infty)$.

\end{enumerate}

\end{lemma}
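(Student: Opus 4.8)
The plan is to exploit the infimum nature of all three constants: to bound $\limsup_{k}$ of each $\Phi_k$-constant from above, it will suffice to test the $\Phi_k$-functional against a single competitor that is near-optimal for the corresponding $\Phi$-problem. Since this competitor is fixed once and for all, independently of $k$, the vector it produces --- the perimeter-to-volume ratios in case~\ref{item:limsup_k_H}, the $p$-Rayleigh energies $[\nupla u]_p^p$ in case~\ref{item:limsup_k_Lambda}, the first $p$-eigenvalues $\lambda_{1,p}(\nupla E)$ in case~\ref{item:limsup_k_lstort} --- is a fixed point $\nupla w\in\ort$, and the hypothesis~\eqref{eq:limsup_Phi_k} evaluated at $\nupla w$ will close the argument.

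Concretely, for~\ref{item:limsup_k_H} I would first assume $H^{\Phi,N}(\Omega)<\infty$ (there being nothing to prove otherwise) and fix $\eps>0$. By \cref{def:H} I can pick an $N$-cluster $\nupla E$ of $\Omega$ with $\Phi(\per(\nupla E)/|\nupla E|)\le H^{\Phi,N}(\Omega)+\eps$. Writing $\nupla w=\per(\nupla E)/|\nupla E|\in\ort$, the very same $\nupla E$ is admissible for $H^{\Phi_k,N}(\Omega)$, hence $H^{\Phi_k,N}(\Omega)\le\Phi_k(\nupla w)$ for all $k$; taking $\limsup_{k}$ and using~\eqref{eq:limsup_Phi_k} at $\nupla w$ gives $\limsup_{k}H^{\Phi_k,N}(\Omega)\le\Phi(\nupla w)\le H^{\Phi,N}(\Omega)+\eps$, and I would then let $\eps\to0^+$.

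For~\ref{item:limsup_k_Lambda} and~\ref{item:limsup_k_lstort} I would run the identical scheme, only changing the class of test objects: an $\eps$-almost optimal $(p,N)$-function (a $(1,N)$-function when $p=1$) with $\nupla w=[\nupla u]_p^p$ (respectively $\nupla w=[\nupla u]_1$) for~\ref{item:limsup_k_Lambda}, and an $\eps$-almost optimal $N$-set with $\nupla w=\lambda_{1,p}(\nupla E)\in\ort$ for~\ref{item:limsup_k_lstort}, using the finiteness of the respective $\Phi$-constant on the bounded open set $\Omega$ to guarantee $\nupla w\in\ort$. There is no real obstacle here: the one point that matters --- and the reason the statement needs no assumption whatsoever on the $\Phi_k$'s --- is precisely that the competitor is frozen before letting $k\to\infty$, so that~\eqref{eq:limsup_Phi_k} is only ever invoked at a single point of $\ort$; no lower semicontinuity or coercivity of the $\Phi_k$'s enters.
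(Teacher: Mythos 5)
Your proof is correct and follows essentially the same route as the paper: fix a single competitor (near-optimal for the $\Phi$-problem), use it to bound each $\Phi_k$-constant, and invoke the pointwise hypothesis $\limsup_k\Phi_k\le\Phi$ at the resulting fixed vector of $\ort$ before optimizing over competitors. The paper simply takes any competitor and passes to the infimum at the end rather than introducing $\eps$, which is a cosmetic difference.
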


\begin{proof}
We prove~\ref{item:limsup_k_H} only, the proof of~\ref{item:limsup_k_lstort} being identical, and that of~\ref{item:limsup_k_Lambda} very similar, just relying on $(p,N)$-functions rather than on $N$-sets.

Given any $N$-cluster $\nupla E$ of $\Omega$, we can estimate
\begin{equation*}
\limsup_{k\to\infty} 
H^{\Phi_k,N}(\Omega)
\le 
\limsup_{k\to\infty} 
\Phi_k\left({\frac{\per(\nupla E)}{|\nupla E|}}\right)
\le
\Phi\left({\frac{\per(\nupla E)}{|\nupla E|}}\right),
\end{equation*}
and the conclusion follows by passing to the infimum on the right-hand side.
\end{proof}

To deal with the limit inferior, we need to introduce the following definition.

\begin{definition}[Equicoercive sequence]
\label{def:equicoercive}
We say that the sequence $\set*{\Phi_k:k\in\N}$ is \emph{equicoercive} if each $\Phi_k$ satisfies \ref{p:c} with the same $\delta$, or, equivalently, if each $\Phi_k$ satisfies \ref{p:c} with $\delta_k$ with $\inf_k \delta_k >0$.
\end{definition}

We can now state the following result, dealing with the limit inferior. 
Here and in the following, the prefix $\Gamma$ in the (possibly, inferior or superior) limits denotes the usual notion of \emph{Gamma convergence} with respect to the Euclidean distance in $\ort$.
For an account, we refer the reader for instance to~\cite{Bra02book}.

\begin{proposition}[Liminf]
\label{res:stability_liminf}
Let $\set*{\Phi_k:k\in\N}$ be equicoercive and let $\Phi$ satisfy \ref{p:m}.
If 
\begin{equation}
\label{eq:G-liminf_Phi_k}
\Phi\le\Gamma\text{--}\liminf\limits_{k\to\infty}\Phi_k,
\end{equation}
then the following hold:
\begin{enumerate}[label=(\roman*)]

\item 
\label{item:liminf_k_H}
$H^{\Phi,N}(\Omega)\le\liminf\limits_{k\to\infty} H^{\Phi_k,N}(\Omega)$;

\item\label{item:liminf_k_Lambda}
$\Lambda^{\Phi,N}_{1,p}(\Omega)\le\liminf\limits_{k\to\infty}\Lambda^{\Phi_k,N}_{1,p}(\Omega)$ for all $p\in[1,\infty)$;

\item\label{item:liminf_k_lstort}
$\lstort^{\Phi,N}_{1,p}(\Omega)\le\liminf\limits_{k\to\infty}\lstort^{\Phi_k,N}_{1,p}(\Omega)$ for all $p\in[1,\infty)$.

\end{enumerate}

\end{proposition}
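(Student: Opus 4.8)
The plan is to prove \ref{item:liminf_k_Lambda} in full and then to deduce \ref{item:liminf_k_H} and \ref{item:liminf_k_lstort} from it. For the deduction I would use that $\Lambda^{\Psi,N}_{1,1}(\Omega)\le H^{\Psi,N}(\Omega)$ and $\Lambda^{\Psi,N}_{1,p}(\Omega)\le\lstort^{\Psi,N}_{1,p}(\Omega)$ hold for \emph{every} reference function $\Psi$ (see the comment after \cref{def:Lambda_1} and the first parts of \cref{res:lstort=Lambda_1,res:equality_p}), and in particular for each $\Phi_k$ with no extra hypothesis, together with the equalities $H^{\Phi,N}(\Omega)=\Lambda^{\Phi,N}_{1,1}(\Omega)$ and $\lstort^{\Phi,N}_{1,p}(\Omega)=\Lambda^{\Phi,N}_{1,p}(\Omega)$, which hold for the limiting $\Phi$ thanks to \ref{p:m} (by \cref{res:H=Lambda,res:lstort=Lambda_1,res:equality_p}). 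Granting \ref{item:liminf_k_Lambda}, this gives
\[
\liminf_{k\to\infty}H^{\Phi_k,N}(\Omega)\ge\liminf_{k\to\infty}\Lambda^{\Phi_k,N}_{1,1}(\Omega)\ge\Lambda^{\Phi,N}_{1,1}(\Omega)=H^{\Phi,N}(\Omega),
\]
and likewise with $\lstort$ in place of $H$.

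To prove \ref{item:liminf_k_Lambda}, fix $p\in[1,\infty)$ and assume $L:=\liminf_k\Lambda^{\Phi_k,N}_{1,p}(\Omega)<\infty$, the other case being trivial. I would pass to a subsequence (not relabeled) along which $\Lambda^{\Phi_k,N}_{1,p}(\Omega)\to L$, and pick for each $k$ a $(p,N)$-function $\nupla u^k$ of $\Omega$ with $\Phi_k([\nupla u^k]_p^p)\le\Lambda^{\Phi_k,N}_{1,p}(\Omega)+1/k$ (reading $[\nupla u^k]_1$ for $[\nupla u^k]_p^p$ when $p=1$). By equicoercivity (\cref{def:equicoercive}) there is $\delta>0$ independent of $k$ with $\delta\,\|\nabla\nupla u^k_i\|_{L^p}^p\le\Phi_k([\nupla u^k]_p^p)$ for all $i$ and $k$; since $\|\nupla u^k_i\|_{L^p}=1$ and $\Omega$ is bounded, $(\nupla u^k)_k$ is bounded in $W^{1,p}_0(\Omega;\R^N)$ (resp.\ in $BV_0(\Omega;\R^N)$ when $p=1$). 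Exactly as in the proofs of \cref{res:existence_f,res:existence_p}, the compact embedding (\cref{rem:BV_emb,rem:Sobolev_emb}) yields, along a further subsequence, $\nupla u^k_i\to\nupla u_i$ in $L^p(\Omega)$ and a.e., and a routine check shows that $\nupla u$ is a $(p,N)$-function of $\Omega$.

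The heart of the matter: the vectors $[\nupla u^k]_p^p\in\ort$ are bounded (again by equicoercivity), so along a further subsequence $[\nupla u^k]_p^p\to\nupla w$ for some $\nupla w\in\ort$, and lower semicontinuity of the Sobolev seminorm (resp.\ of the total variation) gives $[\nupla u]_p^p\le\nupla w$ componentwise. Extending this subsequence by the constant value $\nupla w$ to a sequence converging to $\nupla w$ along all of $\N$, the defining inequality of $\Gamma\text{--}\liminf$ and the hypothesis~\eqref{eq:G-liminf_Phi_k} give $\Phi(\nupla w)\le\liminf_k\Phi_k([\nupla u^k]_p^p)$. Using \ref{p:m} on $[\nupla u]_p^p\le\nupla w$, the fact that $\nupla u$ competes for $\Lambda^{\Phi,N}_{1,p}(\Omega)$, and the choice of $\nupla u^k$, I would conclude
\[
\Lambda^{\Phi,N}_{1,p}(\Omega)\le\Phi([\nupla u]_p^p)\le\Phi(\nupla w)\le\liminf_{k\to\infty}\Phi_k([\nupla u^k]_p^p)\le\liminf_{k\to\infty}\Lambda^{\Phi_k,N}_{1,p}(\Omega)=L.
\]

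The main obstacle is not a computation but the two places requiring care: compactness only delivers the componentwise bound $[\nupla u]_p^p\le\nupla w$, not convergence of $[\nupla u^k]_p^p$ to $[\nupla u]_p^p$, so monotonicity \ref{p:m} of the limiting $\Phi$ is essential to bridge the gap (this is exactly why \ref{p:m} is imposed only on $\Phi$); and the $\Gamma\text{--}\liminf$ is available along the full sequence, so one must handle the subsequence by the extension-by-$\nupla w$ trick above, after which its definition applies verbatim. The role of equicoercivity here is precisely that of \ref{p:c} in the existence theorems: it produces uniform bounds on the seminorms and on the energy vectors of the infimizing functions, with the same $\delta$ for all $k$.
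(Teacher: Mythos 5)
Your proof is correct. For part \ref{item:liminf_k_Lambda} you follow essentially the same route as the paper: near-minimizing $(p,N)$-functions, uniform seminorm bounds from equicoercivity, compact embedding of $W^{1,p}_0(\Omega)$ (resp.\ $BV_0(\Omega)$) into $L^p(\Omega)$, convergence of the energy vectors $[\nupla u^k]_p^p$ to some $\nupla w\in\ort$ along a subsequence, the $\Gamma$-liminf inequality~\eqref{eq:G-liminf_Phi_k} at $\nupla w$, and then \ref{p:m} for the limit $\Phi$ to bridge the gap between $\nupla w$ and $[\nupla u]_p^p$; your extension-by-$\nupla w$ trick to apply the full-sequence $\Gamma$-liminf along a subsequence is exactly the standard fact the paper uses implicitly, and your deduction of \ref{item:liminf_k_lstort} from \ref{item:liminf_k_Lambda} via the first parts of \cref{res:lstort=Lambda_1,res:equality_p} for each $\Phi_k$ and the second parts for $\Phi$ is the same as the paper's. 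Where you genuinely diverge is part \ref{item:liminf_k_H}: the paper proves it directly with $N$-clusters (near-minimizing clusters, uniform perimeter bounds, $L^1$ compactness, and the isoperimetric lower bound on chamber measures as in \cref{res:lower_bound_H,res:props_cheeger}\ref{item:prop_cheeger_meas} to rule out vanishing chambers), whereas you deduce it from \ref{item:liminf_k_Lambda} at $p=1$ using the hypothesis-free inequality $\Lambda^{\Phi_k,N}_{1,1}(\Omega)\le H^{\Phi_k,N}(\Omega)$ and the equality $\Lambda^{\Phi,N}_{1,1}(\Omega)=H^{\Phi,N}(\Omega)$ of \cref{res:H=Lambda}, which only needs \ref{p:m} for the limit function. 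Your route is shorter and sidesteps the chamber-degeneracy check entirely, since the unit $L^1$-norm constraint passes to the limit for free; the paper's direct geometric argument costs a little more but produces, along the way, the compactness and convergence of near-minimizing clusters that is then recycled (with minor modifications) to prove the convergence of $\Phi_k$-Cheeger $N$-clusters in \cref{res:conv1_stability}, which your functional detour does not yield.
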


\begin{proof}
We only prove~\ref{item:liminf_k_H} and~\ref{item:liminf_k_Lambda}, as point~\ref{item:liminf_k_lstort} follows from~\ref{item:liminf_k_Lambda} and both parts of the statements of \cref{res:lstort=Lambda_1} (case $p=1$) and of \cref{res:equality_p} (case $p>1$), also owing to the hypothesis that the limit reference function satisfies~\ref{p:m}.

\vspace{1ex}

\textit{Proof of~\ref{item:limsup_k_H}}.
First, let us notice that, by the equicoercivity assumption and by the boundedness of $\Omega$, in virtue of \cref{res:lower_bound_H}, we have
\[
\liminf\limits_{k\to\infty} H^{\Phi_k, N}(\Omega) \ge N\delta d\left(\frac{|B_1|}{|\Omega|}\right)^{\frac 1d} > 0.
\]
Up to subsequences, we may thus assume that $\lim_{k}H^{\Phi_k,N}(\Omega)= C\in(0,\infty)$. 

Let us fix $\eps>0$ and, for all $k=1,\dots,N$, let $\nupla E^k$ be an $N$-cluster such that
\begin{equation}
\label{eq:pistrullo}
\eps+H^{\Phi_k,N}(\Omega) \ge \Phi_k\left(\frac{\per(\nupla E^k)}{|\nupla E^k|}\right).
\end{equation}
Owing to the fact that $\set*{\Phi_k:k\in\N}$ is equicoercive, we easily see that 
\begin{equation*}
\per(\nupla E^k_i)
\le 
\frac{2C}{\delta}\,|\nupla E^k_i|
\le 
\frac{2C}{\delta}\,|\Omega|
\quad
\text{for}\ i=1,\dots,N,
\end{equation*}
for all $k\in\N$ sufficiently large. 
Therefore, up to subsequences, $\nupla E^k_i\to\nupla E_i$ as $k\to\infty$ in $L^1(\Omega)$ for each $i=1,\dots,N$. With the same reasoning of \cref{res:props_cheeger}\ref{item:prop_cheeger_meas} via~\eqref{eq:pistrullo}, we get that
\[
|\E^k_i| 
\ge 
|B_1| \left(\frac{\delta d}{H^{\Phi_k, N}(\Omega)+\eps} \right)^d
\ge
|B_1| \left(\frac{\delta d}{2C}\right)^d,
\] 
thus showing that $\nupla E$ is an $N$-cluster of $\Omega$ with 
\begin{equation*}
{\frac{\per(\nupla E)}{|\nupla E|}}
\le 
\liminf_{k\to\infty}\frac{\per(\nupla E^k)}{|\nupla E^k|}.
\end{equation*}
Up to extracting a further subsequence, we may assume that $\lim_{k}\per(\nupla E^k)|\nupla E^k|^{-1}=\nupla v\in\ort$.
Owing to the choice of the subsequence, the assumption~\eqref{eq:G-liminf_Phi_k}, again the choice of the subsequence, and the assumption that $\Phi$ satisfies \ref{p:m}, we get that 
\begin{equation*}
\eps + \lim_{k\to\infty}H^{\Phi_k,N}(\Omega)
\ge
\lim_{k\to\infty}
\Phi_k\left(
\frac{\per(\nupla E^k)}{|\nupla E^k|}
\right)
\ge 
\Phi(\nupla v)
\ge 
\Phi\left(
{\frac{\per(\nupla E)}{|\nupla E|}}
\right)
\ge 
H^{\Phi,N}(\Omega).
\end{equation*}
Letting $\eps\to 0$, the validity of~\ref{item:liminf_k_H} follows.

\vspace{1ex}

\textit{Proof of~\ref{item:liminf_k_Lambda}}.
The argument is the same we used to prove~\ref{item:liminf_k_H}, so we only sketch it.

Fix $p\in[1,\infty)$ and, up to subsequences, assume that $\lim_{k}\Lambda^{\Phi_k,N}_{1,p}(\Omega)=C_p\in[0,\infty)$. 
Given any $\eps>0$, we can find a $(p,N)$-function $\nupla u^k$ of $\Omega$ such that
\[
\eps + \Lambda^{\Phi_k,N}_{1,p}(\Omega) \ge \Phi_k([\nupla u^k]^p_p)
\]
for each $k\in\N$ and such that 
\begin{equation*}
\|\nabla\nupla u^k_i\|_{L^p}^p
\le 
\frac{2C_p}{\delta},
\quad
\text{for}\ i=1,\dots,N,
\end{equation*}
for all $k\in\N$ sufficiently large.
By compactness of the embedding $W^{1,p}_0(\Omega)\subset L^p(\Omega)$ for $p>1$, or of the embedding $BV_0(\Omega)\subset L^1(\Omega)$ for $p=1$, up to subsequences, $\nupla u^k_i\to\nupla u$ as $k\to\infty$ in $L^p(\Omega)$ for $i=1,\dots,N$, for some $\nupla u_i\in L^p(\Omega)$.
It is easy to see that $\nupla u$ is a $(p,N)$-function of $\Omega$ with
\begin{equation*}
[\nupla u]_{p}^p
\le 
\liminf_{k\to\infty}
[\nupla u^k]_{p}^p.
\end{equation*} 
Again up to subsequences, we may assume that $\lim_{k}
[\nupla u^k]_{p}^p=\nupla v_p\in\ort$.
Just as before, owing to the choice of the subsequence, the assumption~\eqref{eq:G-liminf_Phi_k}, again the choice of the subsequence, and the assumption that $\Phi$ satisfies \ref{p:m}, we get that 
\begin{equation*}
\eps+\lim_{k\to\infty}\Lambda^{\Phi_k,N}_{1,p}(\Omega)
\ge
\lim_{k\to\infty}
\Phi_k([\nupla u^k]_{p}^p)
\ge 
\Phi(\nupla v_p)
\ge 
\Phi([\nupla u]_{p}^p)
\ge 
\Lambda^{\Phi,N}_{1,p}(\Omega),
\end{equation*}
and~\ref{item:liminf_k_Lambda} follows by taking the limit as $\eps\to 0$.
\end{proof}

As a consequence of \cref{res:stability_limsup,res:stability_liminf}, we get the following stability result. 
It is easy to observe that the combination of~\eqref{eq:limsup_Phi_k} and~\eqref{eq:G-liminf_Phi_k} yields~\eqref{eq:Gamma=pointwise}. 
We also point out that, assuming the reference functions $\Phi_k$ to satisfy \ref{p:m}, the validity of~\eqref{eq:Gamma=pointwise} implies that~\ref{p:m} holds for the limit reference function $\Phi$.

\begin{theorem}[Stability]
\label{res:stability}
Let $\set*{\Phi_k:k\in\N}$ be equicoercive. 
If
\begin{equation}
\label{eq:Gamma=pointwise}
\Phi=
\lim\limits_{k\to\infty}\Phi_k
=
\Gamma\text{--}\lim\limits_{k\to\infty}\Phi_k,
\end{equation} 
with $\Phi$ satisfying \ref{p:m}, then the following hold:
\begin{enumerate}[label=(\roman*)]

\item
$H^{\Phi,N}(\Omega)=\lim\limits_{k\to\infty} H^{\Phi_k,N}(\Omega)$;

\item
$\Lambda^{\Phi,N}_{1,p}(\Omega)
=
\lim\limits_{k\to\infty}\Lambda^{\Phi_k,N}_{1,p}(\Omega)$ for all $p\in[1,\infty)$;

\item
$\lstort^{\Phi,N}_{1,p}(\Omega)
=
\lim\limits_{k\to\infty}\lstort^{\Phi_k,N}_{1,p}(\Omega)$ for all $p\in[1,\infty)$.

\end{enumerate}
\end{theorem}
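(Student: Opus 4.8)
The plan is to obtain \cref{res:stability} as a straightforward corollary of \cref{res:stability_limsup,res:stability_liminf}: the hypothesis~\eqref{eq:Gamma=pointwise} is precisely strong enough to feed both of them, one giving the $\limsup$ bound and the other the $\liminf$ bound.

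First I would record that the pointwise identity $\Phi=\lim_{k\to\infty}\Phi_k$ in~\eqref{eq:Gamma=pointwise} trivially yields $\limsup_{k\to\infty}\Phi_k=\Phi$, hence in particular~\eqref{eq:limsup_Phi_k} holds. Since \cref{res:stability_limsup} requires no assumption whatsoever on the reference functions, it applies verbatim and gives $\limsup_{k\to\infty}H^{\Phi_k,N}(\Omega)\le H^{\Phi,N}(\Omega)$, and likewise $\limsup_{k\to\infty}\Lambda^{\Phi_k,N}_{1,p}(\Omega)\le\Lambda^{\Phi,N}_{1,p}(\Omega)$ and $\limsup_{k\to\infty}\lstort^{\Phi_k,N}_{1,p}(\Omega)\le\lstort^{\Phi,N}_{1,p}(\Omega)$ for every $p\in[1,\infty)$.

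Next I would observe that the $\Gamma$-convergence $\Phi=\Gamma\text{--}\lim_{k\to\infty}\Phi_k$ in~\eqref{eq:Gamma=pointwise} gives in particular $\Phi=\Gamma\text{--}\liminf_{k\to\infty}\Phi_k$, so that~\eqref{eq:G-liminf_Phi_k} holds. By assumption $\set*{\Phi_k:k\in\N}$ is equicoercive and $\Phi$ satisfies~\ref{p:m}, which are exactly the hypotheses of \cref{res:stability_liminf}; invoking it produces the reverse inequalities $H^{\Phi,N}(\Omega)\le\liminf_{k\to\infty}H^{\Phi_k,N}(\Omega)$, $\Lambda^{\Phi,N}_{1,p}(\Omega)\le\liminf_{k\to\infty}\Lambda^{\Phi_k,N}_{1,p}(\Omega)$, and $\lstort^{\Phi,N}_{1,p}(\Omega)\le\liminf_{k\to\infty}\lstort^{\Phi_k,N}_{1,p}(\Omega)$ for every $p\in[1,\infty)$. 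Chaining the two sets of inequalities, for each of the three quantities the $\limsup$ and the corresponding $\liminf$ coincide with the value of the constant at $\Phi$; hence the limits exist and equal $H^{\Phi,N}(\Omega)$, $\Lambda^{\Phi,N}_{1,p}(\Omega)$, and $\lstort^{\Phi,N}_{1,p}(\Omega)$, respectively, as claimed.

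I do not expect any genuine obstacle at this stage, since all the substance has already been carried out in \cref{res:stability_limsup,res:stability_liminf}. The only conceptual point worth keeping in mind is that it is the weaker $\Gamma$-liminf inequality~\eqref{eq:G-liminf_Phi_k}, rather than a pointwise one, that makes \cref{res:stability_liminf}---and therefore the present theorem---non-trivial: the clusters or $(p,N)$-functions nearly realizing $H^{\Phi_k,N}(\Omega)$ or $\Lambda^{\Phi_k,N}_{1,p}(\Omega)$ vary with $k$ and are only controlled, after extracting a subsequence, in $L^1(\Omega)$ (resp.\ $L^p(\Omega)$), and equicoercivity---through \cref{res:lower_bound_H} and the argument of \cref{res:props_cheeger}\ref{item:prop_cheeger_meas}---is exactly what prevents the limiting cluster from degenerating.
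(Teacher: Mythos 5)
Your proposal is correct and follows exactly the paper's route: the theorem is obtained by combining \cref{res:stability_limsup} (fed by the pointwise limit, which gives~\eqref{eq:limsup_Phi_k}) with \cref{res:stability_liminf} (fed by the $\Gamma$-limit, which gives~\eqref{eq:G-liminf_Phi_k}, together with equicoercivity and \ref{p:m} for $\Phi$). Nothing is missing; the chaining of the two inequalities is all the paper does as well.
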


\begin{remark}[Application to $q$-norms]
The previous result applies to the family 
\begin{equation*}
\set*{\Phi_q=\|\cdot\|_q:q\in[1,\infty]}
\end{equation*}
as in~\eqref{eq:p_norm}, allowing to interpolate the results of~\cites{CL19,C17}, corresponding to $q=1$, with the ones of~\cites{BP18,P10}, corresponding to $q=\infty$. 
\end{remark}

\subsection{Convergence of minimizers}

We end the section with the convergence of minimizers with respect to the convergence of the reference functions, proving the counterparts of \cref{res:conv_min,res:conv_geom} in this situation.

The following result yields convergence of minimizers of~\eqref{eq:H} with respect to the convergence of the reference functions.
The proof is almost identical to that of \cref{res:stability_liminf}\ref{item:liminf_k_H} up to minor modifications, and so we omit it. 

\begin{theorem}
\label{res:conv1_stability}
Let the assumptions of \cref{res:stability_liminf} be in force, and let assume that $\liminf_{k}H^{\Phi_k,N}(\Omega)<\infty$, and let $\nupla E^k$ be a $\Phi_k$-Cheeger $N$-cluster of $\Omega$ for each $k\in\N$.
Up to subsequences, there exists an $N$-cluster $\nupla E$ of $\Omega$ such that 
\begin{equation*}
\nupla E^k_i\to\nupla E_i\ 
\text{in}\ L^1(\Omega),
\quad
\text{for each}\
i=1,\dots,N,
\end{equation*}
and
\begin{equation*}
\Phi\left(\frac{\per(\E)}{|\E|} \right)
\le 
\liminf_{k\to\infty}
H^{\Phi_k,N}(\Omega).
\end{equation*}
Moreover, under the assumptions of \cref{res:stability}, $\nupla E$ is a $\Phi$-Cheeger $N$-cluster of $\Omega$.
\end{theorem}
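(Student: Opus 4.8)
The plan is to argue exactly as in the proof of \cref{res:stability_liminf}\ref{item:liminf_k_H}, but keeping track of the sequence of minimizers $\nupla E^k$ rather than of near-minimizers.

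First I would pass to a subsequence realizing the liminf on the right-hand side, so that $\lim_k H^{\Phi_k,N}(\Omega)=C:=\liminf_{k\to\infty}H^{\Phi_k,N}(\Omega)$. Since each $\Phi_k$ satisfies \ref{p:c} with the common constant $\delta$, \cref{res:lower_bound_H} gives $H^{\Phi_k,N}(\Omega)\ge N\delta d(|B_1|/|\Omega|)^{1/d}$ for every $k$, whence $C\in(0,\infty)$ and $H^{\Phi_k,N}(\Omega)\le 2C$ for $k$ large. Using \ref{p:c} again on the minimizer, $\delta\sum_i\per(\nupla E^k_i)/|\nupla E^k_i|\le\Phi_k(\per(\nupla E^k)/|\nupla E^k|)=H^{\Phi_k,N}(\Omega)\le 2C$; together with $|\nupla E^k_i|\le|\Omega|$ this yields the uniform bound $\per(\nupla E^k_i)\le(2C/\delta)|\Omega|$ for $k$ large and $i=1,\dots,N$. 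By $BV$ compactness, up to a further subsequence we have $\nupla E^k_i\to\nupla E_i$ in $L^1(\Omega)$ for each $i$; the lower semicontinuity of the perimeter gives $\per(\nupla E_i)<\infty$, and $|\nupla E_i\cap\nupla E_j|=0$ for $i\ne j$ clearly passes to the limit.

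To see that $\nupla E$ is an $N$-cluster I would check $|\nupla E_i|>0$ via the measure lower bound of \cref{res:props_cheeger}\ref{item:prop_cheeger_meas}: combining $\delta\per(\nupla E^k_i)/|\nupla E^k_i|\le H^{\Phi_k,N}(\Omega)\le 2C$ with the isoperimetric inequality $\per(\nupla E^k_i)\ge d|B_1|^{1/d}|\nupla E^k_i|^{(d-1)/d}$ gives $|\nupla E^k_i|\ge|B_1|(\delta d/(2C))^d$, and this uniform bound passes to the limit. Then the lower semicontinuity of the perimeter gives $\per(\nupla E)/|\nupla E|\le\liminf_k\per(\nupla E^k)/|\nupla E^k|$ componentwise, and, up to one more extraction, I may assume $\per(\nupla E^k)/|\nupla E^k|\to\nupla v\in\ort$, so that $\per(\nupla E)/|\nupla E|\le\nupla v$. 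Applying the $\Gamma$-$\liminf$ hypothesis \eqref{eq:G-liminf_Phi_k} along the converging sequence $\per(\nupla E^k)/|\nupla E^k|\to\nupla v$ yields $\Phi(\nupla v)\le\Gamma\text{--}\liminf_k\Phi_k(\nupla v)\le\liminf_k\Phi_k(\per(\nupla E^k)/|\nupla E^k|)=\lim_k H^{\Phi_k,N}(\Omega)=C$, and then \ref{p:m} for $\Phi$ gives $\Phi(\per(\nupla E)/|\nupla E|)\le\Phi(\nupla v)\le C=\liminf_k H^{\Phi_k,N}(\Omega)$, which is the first assertion.

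For the last claim, under the assumptions of \cref{res:stability} one has $H^{\Phi,N}(\Omega)=\lim_k H^{\Phi_k,N}(\Omega)$ by \cref{res:stability}(i), so the inequality just obtained reads $\Phi(\per(\nupla E)/|\nupla E|)\le H^{\Phi,N}(\Omega)$; since $\nupla E$ is an $N$-cluster of $\Omega$, the reverse inequality is immediate from the definition of $H^{\Phi,N}(\Omega)$, and therefore $\nupla E$ is a $\Phi$-Cheeger $N$-cluster of $\Omega$. The only point requiring some care is the bookkeeping of the successive subsequence extractions (first to realize the liminf, then for $L^1$-compactness of the chambers, then for convergence of the perimeter-to-measure ratios) together with the correct use of the $\Gamma$-$\liminf$ inequality at the single limit point $\nupla v$; the compactness part is a verbatim repetition of the arguments already used in \cref{res:existence} and \cref{res:stability_liminf}, so no genuinely new difficulty arises.
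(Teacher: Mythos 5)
Your proposal is correct and is exactly the argument the paper intends: the paper omits the proof, stating it is the proof of \cref{res:stability_liminf}\ref{item:liminf_k_H} with minor modifications, and your write-up is precisely that argument with the exact minimizers $\nupla E^k$ replacing the $\eps$-near minimizers (so the $\eps$ disappears), plus the correct one-line identification of the limit as a $\Phi$-Cheeger $N$-cluster under the assumptions of \cref{res:stability}. All the key points (equicoercivity giving the uniform perimeter and measure bounds, $BV$ compactness, lower semicontinuity, extraction so that the ratio vector converges, the $\Gamma$-liminf inequality at the limit point, and monotonicity \ref{p:m} of $\Phi$) are used as in the paper.
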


The following result yields convergence of minimizers of~\eqref{eq:Lambda_1} and~\eqref{eq:Lambda_p} with respect to the convergence of reference functions.
The proof is almost identical to that of \cref{res:stability_liminf}\ref{item:liminf_k_Lambda} up to minor modifications, and so we omit it.

\begin{theorem}
\label{res:conv2_stability}
Let $p\in[1,\infty)$,  let the assumptions of \cref{res:stability_liminf} be in force and assume that $\liminf_k \Lambda^{\Phi_k,N}_{1,p}(\Omega)<\infty$, and let $\nupla u^k$ be a $(p,\Phi_k)$-eigen-$N$-function of $\Omega$ for each $k\in\N$.
Up to subsequences, there exists a $(p,N)$-function $\nupla u$ of $\Omega$ such that 
\begin{equation*}
\nupla u^k_i\to\nupla u_i\
\text{in}\ L^p(\Omega),
\quad
\text{for}\ i=1,\dots,N,
\end{equation*}
and
\begin{equation*}
\Phi([\nupla u]_p^p)
\le 
\liminf_{k\to\infty}
\Lambda^{\Phi_k,N}_{1,p}(\Omega).
\end{equation*}
Moreover, under the assumptions of \cref{res:stability}, $\nupla u$ is a $(p,\Phi)$-eigen-$N$-function of $\Omega$.
\end{theorem}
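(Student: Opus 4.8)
The plan is to rerun the compactness-and-lower-semicontinuity argument from the proof of \cref{res:stability_liminf}\ref{item:liminf_k_Lambda}, which is slightly simpler here because the $\nupla u^k$ are \emph{exact} minimizers rather than near-minimizers, and then to upgrade the resulting lower bound to an equality by invoking \cref{res:stability}. First I would pass to a subsequence (not relabelled) so that $\Lambda^{\Phi_k,N}_{1,p}(\Omega)\to\ell:=\liminf_k\Lambda^{\Phi_k,N}_{1,p}(\Omega)<\infty$; in particular $C:=\sup_k\Lambda^{\Phi_k,N}_{1,p}(\Omega)<\infty$. By equicoercivity (\cref{def:equicoercive}) there is $\delta>0$ with $\Phi_k(\nupla v)\ge\delta\|\nupla v\|_1$ for all $k$ and all $\nupla v\in\ort$, so, since $\nupla u^k$ minimizes $\Lambda^{\Phi_k,N}_{1,p}(\Omega)$,
\begin{equation*}
\delta\,\|\nabla\nupla u^k_i\|_{L^p}^p
\le
\delta\,\|[\nupla u^k]_p^p\|_1
\le
\Phi_k([\nupla u^k]_p^p)
=
\Lambda^{\Phi_k,N}_{1,p}(\Omega)
\le
C
\end{equation*}
for all $k\in\N$ and $i=1,\dots,N$. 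Since also $\|\nupla u^k_i\|_{L^p}=1$, the sequence $(\nupla u^k_i)_k$ is bounded in $W^{1,p}_0(\Omega)$ (in $BV_0(\Omega)$ if $p=1$); the boundedness of $\Omega$ and the compactness of the embedding $W^{1,p}_0(\Omega)\subset L^p(\Omega)$ (of $BV_0(\Omega)\subset L^1(\Omega)$ if $p=1$) yield, up to a further subsequence, $\nupla u^k_i\to\nupla u_i$ in $L^p(\Omega)$ for each $i=1,\dots,N$, for some $\nupla u_i$.

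Next I would check that $\nupla u=(\nupla u_1,\dots,\nupla u_N)$ is a $(p,N)$-function of $\Omega$ as in \cref{def:pN_function}: non-negativity and the disjointness relations $\nupla u_i\,\nupla u_j=0$ a.e.\ pass to the limit along a subsequence realizing a.e.\ convergence, $\|\nupla u_i\|_{L^p}=1$ holds by $L^p$-convergence, and $\nupla u_i\in W^{1,p}_0(\Omega)$ (resp.\ $BV_0(\Omega)$) because the (extended-by-zero) sequence is bounded in $W^{1,p}(\R^d)$ (resp.\ $BV(\R^d)$) and its weak limit agrees with the $L^p$ limit $\nupla u_i$, which vanishes a.e.\ outside $\Omega$. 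By lower semicontinuity of $\|\nabla\,\cdot\,\|_{L^p}^p$ (of the total variation if $p=1$) we get $[\nupla u]_p^p\le\liminf_k[\nupla u^k]_p^p$ component-wise; since $([\nupla u^k]_p^p)_k$ is bounded in $\ort$, passing to yet another subsequence we may assume $[\nupla u^k]_p^p\to\nupla v\in\ort$, so $[\nupla u]_p^p\le\nupla v$. Applying the $\Gamma$-$\liminf$ inequality~\eqref{eq:G-liminf_Phi_k} along $[\nupla u^k]_p^p\to\nupla v$ and then \ref{p:m} together with $[\nupla u]_p^p\le\nupla v$,
\begin{equation*}
\Phi([\nupla u]_p^p)
\le
\Phi(\nupla v)
\le
\liminf_{k\to\infty}\Phi_k([\nupla u^k]_p^p)
=
\liminf_{k\to\infty}\Lambda^{\Phi_k,N}_{1,p}(\Omega),
\end{equation*}
which is the first assertion.

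For the last claim, under the assumptions of \cref{res:stability} one has $\Lambda^{\Phi,N}_{1,p}(\Omega)=\lim_k\Lambda^{\Phi_k,N}_{1,p}(\Omega)$, and since $\nupla u$ is an admissible competitor for $\Lambda^{\Phi,N}_{1,p}(\Omega)$, the display above gives $\Phi([\nupla u]_p^p)\le\Lambda^{\Phi,N}_{1,p}(\Omega)\le\Phi([\nupla u]_p^p)$, so $\nupla u$ is a $(p,\Phi)$-eigen-$N$-function of $\Omega$. The only genuinely delicate point is verifying that the $L^p$ (resp.\ $L^1$) limit $\nupla u$ remains a $(p,N)$-function — i.e.\ that the unit-norm normalization and the disjoint-support constraint survive the passage to the limit — but this is handled exactly as in \cref{res:stability_liminf}\ref{item:liminf_k_Lambda} and in the existence proofs \cref{res:existence_f,res:existence_p}, so I would only sketch these routine verifications.
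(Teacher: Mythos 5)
Your proposal is correct and follows precisely the route the paper intends: the authors omit this proof, stating it is "almost identical" to that of \cref{res:stability_liminf}\ref{item:liminf_k_Lambda}, and your argument is exactly that scheme — equicoercivity plus the uniform energy bound giving compactness in $L^p(\Omega)$ (via $W^{1,p}_0$, resp.\ $BV_0$, boundedness), stability of the $(p,N)$-function constraints under a.e.\ and $L^p$ convergence, lower semicontinuity of the seminorms combined with the $\Gamma$-$\liminf$ inequality along a convergent subsequence of $[\nupla u^k]_p^p$ and \ref{p:m} — simplified by the fact that the $\nupla u^k$ are exact minimizers, and concluded via \cref{res:stability} together with the admissibility of $\nupla u$. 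No gaps; the argument is sound.
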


Finally, similarly to \cref{res:conv_geom}, the following result proves that $L^1$ limits of minimizers of~\eqref{eq:Lambda_p} for a sequence of reference functions are minimizers of~\eqref{eq:Lambda_p} for the limit reference function. 

\begin{theorem}
\label{res:conv3_stability}
Let $p\in[1,\infty)$ and let the assumptions of \cref{res:stability_liminf} be in force, assume that $\liminf_k \lstort^{\Phi_k, N}_{1,p}(\Omega)<\infty$, and let $\nupla E^k$ be a $(p,\Phi_k)$-eigen-$N$-set of $\Omega$ for each $k\in\N$.
If $\nupla E^k\to\nupla E$ as $k\to\infty$ in $L^1(\Omega; \R^N)$ for some $N$-set $\E$ of $\Omega$, then
\begin{equation*}
\Phi(\lambda_{1,p}(\nupla E))
\le 
\liminf_{k\to\infty}
\lstort^{\Phi_k,N}_{1,p}(\Omega).
\end{equation*}
Moreover, under the assumptions of \cref{res:stability}, $\nupla E$ is a $(p,\Phi)$-eigen-$N$-set of $\Omega$.
\end{theorem}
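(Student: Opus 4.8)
The plan is to imitate the first part of the proof of \cref{res:conv_geom}, with the $\Gamma$-$\liminf$ hypothesis~\eqref{eq:G-liminf_Phi_k} now playing the role that \cref{res:cheeger_p} plays there. Throughout, $p\in[1,\infty)$ is fixed, and when $p=1$ the Sobolev objects below are read as their $BV_0$ counterparts, $\lambda_{1,1}$ being identified with the Cheeger constant via \cref{res:h=lambda_1}. First I would pass to a subsequence along which $\lstort^{\Phi_k,N}_{1,p}(\Omega)\to\liminf_k\lstort^{\Phi_k,N}_{1,p}(\Omega)=:L<\infty$, so that these quantities are uniformly bounded by some $C<\infty$. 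Since $\nupla E^k$ is a $(p,\Phi_k)$-eigen-$N$-set, $\Phi_k(\lambda_{1,p}(\nupla E^k))=\lstort^{\Phi_k,N}_{1,p}(\Omega)\le C$, and equicoercivity forces $\lambda_{1,p}(\nupla E^k_i)\le C/\delta<\infty$ for every $i$ and $k$; since each $\nupla E^k_i$ is bounded, \cref{rem:ex_eigenfunctions} (resp.\ \cref{rem:ex_eigenfunctions_1} for $p=1$) provides a non-negative eigenfunction $\nupla w^k_i\in W^{1,p}_0(\nupla E^k_i)$ with $\|\nupla w^k_i\|_{L^p}=1$ and $\|\nabla\nupla w^k_i\|_{L^p}^p=\lambda_{1,p}(\nupla E^k_i)$. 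These are uniformly bounded in $W^{1,p}_0(B_R)$ for any $R>0$ with $\Omega\Subset B_R$, so, passing to a further subsequence, I would extract limits $\nupla w^k_i\to\nupla w_i$ in $L^p(\Omega)$ and $\lambda_{1,p}(\nupla E^k_i)\to v_i\ge0$ for each $i$, with $\nupla w_i\ge0$ and $\|\nupla w_i\|_{L^p}=1$.

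The crucial---and only really delicate---step is to show that $\nupla w_i\in W^{1,p}_0(\nupla E_i)$, i.e.\ that no mass of $\nupla w_i$ escapes the limiting chamber. Here I would pass to a subsequence realizing simultaneously $\nupla w^k_i\to\nupla w_i$ and $\chi_{\nupla E^k_i}\to\chi_{\nupla E_i}$ pointwise a.e.\ on $\R^d$, and exploit that for each $k$ one has $\nupla w^k_i=\nupla w^k_i\,\chi_{\nupla E^k_i}$ off a null set: the countable union of these null sets is still null, so letting $k\to\infty$ off a common conull set yields $\nupla w_i=\nupla w_i\,\chi_{\nupla E_i}$ a.e., hence $\nupla w_i=0$ a.e.\ on $\R^d\setminus\nupla E_i$. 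Consequently $\nupla w_i$ is an admissible competitor for $\lambda_{1,p}(\nupla E_i)$, and lower semicontinuity of the Dirichlet seminorm (resp.\ of the total variation) under $L^p$ (resp.\ $L^1$) convergence gives
\[
\lambda_{1,p}(\nupla E_i)\le\|\nabla\nupla w_i\|_{L^p}^p\le\liminf_{k\to\infty}\|\nabla\nupla w^k_i\|_{L^p}^p=v_i<\infty,\qquad i=1,\dots,N,
\]
so that $\lambda_{1,p}(\nupla E)\le\nupla v$ in $\ort$.

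To conclude, since $\lambda_{1,p}(\nupla E^k)\to\nupla v$ in $\ort$, the very definition of $\Gamma$-$\liminf$ together with~\eqref{eq:G-liminf_Phi_k} gives $L=\lim_k\Phi_k(\lambda_{1,p}(\nupla E^k))\ge\Phi(\nupla v)$, while~\ref{p:m} applied to $\lambda_{1,p}(\nupla E)\le\nupla v$ gives $\Phi(\nupla v)\ge\Phi(\lambda_{1,p}(\nupla E))$; chaining these yields $\Phi(\lambda_{1,p}(\nupla E))\le\liminf_k\lstort^{\Phi_k,N}_{1,p}(\Omega)$, which is the first assertion (and in particular shows $\lambda_{1,p}(\nupla E)\in\ort$). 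For the ``moreover'' part, under the assumptions of \cref{res:stability}, part~(iii) of \cref{res:stability} gives $\liminf_k\lstort^{\Phi_k,N}_{1,p}(\Omega)=\lim_k\lstort^{\Phi_k,N}_{1,p}(\Omega)=\lstort^{\Phi,N}_{1,p}(\Omega)$, whence $\Phi(\lambda_{1,p}(\nupla E))\le\lstort^{\Phi,N}_{1,p}(\Omega)$; since $\nupla E$ is an $N$-set of $\Omega$ with $\lambda_{1,p}(\nupla E)\in\ort$, the reverse inequality is immediate from \cref{def:lstort_p}, so $\nupla E$ is a $(p,\Phi)$-eigen-$N$-set of $\Omega$.

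The main obstacle is the localization step of the second paragraph: one must reconcile the $L^1$-convergence of the chambers with the $L^p$- (resp.\ $L^1$-) convergence of the eigenfunctions in order to be sure that $\nupla w_i$ indeed vanishes outside the limit chamber. Everything else is a standard compactness-and-lower-semicontinuity argument, carried out uniformly in $p\in[1,\infty)$ once $W^{1,p}_0$ is replaced by $BV_0$ at $p=1$.
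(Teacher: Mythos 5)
Your argument is correct and reaches the conclusion by the same overall mechanism as the paper (compactness of eigenfunctions, localization of the limit supports inside the limit chambers, then the $\Gamma$-$\liminf$ bound combined with \ref{p:m}, and finally \cref{res:stability} plus the trivial competitor inequality from \cref{def:lstort_p} for the ``moreover'' part). The two technical choices where you deviate are worth noting. First, the paper does not build eigenfunctions chamber by chamber: it invokes \cref{res:equality_p} to select a global $(p_{\vphantom{k}},\Phi_k)$-eigen-$N$-function $\nupla u^k$ of $\Omega$ with $\set{\nupla u^k_i>0}\subset\nupla E^k_i$, whereas you take a first eigenfunction of each $\lambda_{1,p}(\nupla E^k_i)$ directly via \cref{rem:ex_eigenfunctions} (resp.\ \cref{rem:ex_eigenfunctions_1}); your choice is slightly more self-contained, since all you need is unit norm, support in the chamber, and energy equal to $\lambda_{1,p}(\nupla E^k_i)$, and these chamber-wise eigenfunctions provide exactly that. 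Second, for the localization step the paper argues through superlevel sets: by Cavalieri, $\set{\nupla u^k_i>t}\to\set{\nupla u_i>t}$ in $L^1(\Omega)$ for a.e.\ $t$, and the inclusions $\set{\nupla u^k_i>t}\subset\nupla E^k_i$ pass to the limit, giving $\set{\nupla u_i>0}\subset\nupla E_i$; your pointwise a.e.\ argument with the identity $\nupla w^k_i=\nupla w^k_i\,\chi_{\nupla E^k_i}$ along a subsequence where both factors converge a.e.\ is an equally valid and arguably more elementary way to reach the same inclusion, and it is compatible with the measure-theoretic definitions of $W^{1,p}_0$ and $BV_0$ used in the paper. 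The remaining steps (the $\Gamma$-$\liminf$ inequality applied along the extracted subsequence, monotonicity to pass from $\lambda_{1,p}(\nupla E)\le\nupla v$ to the constants, and the identification of the limit as a minimizer under the hypotheses of \cref{res:stability}) coincide with the paper's.
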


\begin{proof}
We detail the proof in the case $p>1$ only, as the case $p=1$ is essentially the same but replacing  $W^{1,p}_0(\Omega;\R^N)$ with $BV_0(\Omega;\R^N)$. 

Owing to the fact that $\set*{\Phi_k:k\in\N}$ is equicoercive, the sequence $\{\,\lambda_{1,p}(\mathcal E^k)\,:\,k\in\mathbb{N}\,\}$ is bounded in $\ort$. Moreover, the chambers $\nupla E^k\subset\Omega$ are bounded.
Thus, by \cref{res:equality_p}, we find a $(p,\Phi_k)$-eigen-$N$-function $\nupla u^k$ of $\Omega$, i.e., such that 
\begin{equation*}
\|\nabla \nupla u^k_i\|_{L^p}^p=\lambda_{1,p}(\nupla E^k_i),
\quad
\|\nupla u_i^k\|_{L^p}=1,
\quad
\nupla u_i^k\ge0
\quad 
\text{and}
\quad
\nupla F_i^k = \set*{\nupla u^k_i >0} \subset \nupla E^k_i,
\end{equation*}
for each $k\in\N$ and $i=1,\dots,N$.
Consequently, the sequence $(\nupla u^k)_{k\in\N}$ of $(p,\Phi_k)$-eigen-$N$-functions of $\Omega$ is bounded in $W^{1,p}_0(\Omega;\R^N)$ and, thus, up to subsequences, $\nupla u^k\to\nupla u$ as $k\to\infty$ in $L^p(\Omega;\R^N)$ for some $(p,N)$-function $\nupla u$ of $\Omega$ such that $\|\nupla u_i\|_{L^p}=1$ for $i=1\,\dots,N$. 
By lower semicontinuity of the seminorm, we also infer that 
\begin{equation*}
[\nupla u]_p^p
\le 
\liminf_{k\to\infty}[\nupla u^k]_p^p
\le
\liminf_{k\to\infty}\lambda_{1,p}(\nupla E^k).
\end{equation*}
Again up to subsequences, we may assume that $\lim_{k}\lambda_{1,p}(\nupla E^k)=\nupla v_p\in\ort$.
Owing to \ref{p:m} and \cref{res:stability_liminf}, we thus get that 
\begin{equation}
\label{eq:cerbiatto}
\Phi(\lambda_{1,p}(\nupla F))
\le
\Phi([\nupla u]_p^p)
\le 
\Phi(\nupla v_p)
\le 
\liminf_{k\to\infty}
\Phi_k(\lambda_{1,p}(\nupla E^k))
= 
\liminf_{k\to\infty}
\lstort^{\Phi_k,N}_{1,p}(\Omega),
\end{equation}
where $\nupla F=\left(\set*{u_1>0},\dots,\set*{u_N>0}\right)$. Since $\nupla u^k\to\nupla u$ as $k\to\infty$ in $L^p(\Omega;\R^N)$ and $|\Omega|<\infty$, by Cavalieri's principle we infer that 
\begin{equation*}
\set*{\nupla u^k_i>t}\to\set*{\nupla u_i>t}
\quad
\text{as $k\to\infty$ in $L^1(\Omega)$ for a.e.\ $t\ge 0$.}
\end{equation*}
To conclude, we now need to use that $\E^k$ converges to an $N$-set $\E$. Since $\set*{\nupla u^k_i>t}\subset\nupla E^k_i$ whenever $t>0$ for each $k\in\N$ and $i=1,\dots,N$ by construction, and since $\nupla E^k_i\to\nupla E_i$ as $k\to\infty$ in $L^1(\Omega)$ for $i=1,\dots,N$, we easily get that $\set*{\nupla u_i>t}\subset\nupla E$ for a.e.\ $t>0$. 
Consequently, we must have that $\nupla F_i=\set*{\nupla u_i>0}\subset\nupla E_i$ for each $i=1,\dots,N$.
Thus $\lambda_{1,p}(\nupla E)\le\lambda_{1,p}(\nupla F)$, which, paired with~\eqref{eq:cerbiatto} and owing to \ref{p:m}, yields
\begin{equation*}
\Phi(\lambda_{1,p}(\nupla E))
\le 
\liminf_{k\to\infty}
\lstort^{\Phi_k,N}_{1,p}(\Omega).
\end{equation*}
Under the assumptions of \cref{res:stability}, the right hand side of the above inequality equals $\lstort^{\Phi, N}_{1,p}(\Omega)$.
Therefore $\nupla E$ is a $(p,\Phi)$-eigen-$N$-set of $\Omega$.
\end{proof}

\begin{remark}
If, on top of asking that the limit reference function $\Phi$ satisfies \ref{p:m}, one assumes that the whole sequence of reference functions possesses this property, then \cref{res:conv1_stability} (in virtue of the validity of the second part of \cref{res:H=lstort}) provides a stronger version of \cref{res:conv3_stability} in the case $p=1$. 
\end{remark}

\begin{remark}
The results of Section~\ref{sec:stability} may be achieved in more general contexts following~\cite{FPSS24}, as soon as suitable notions of isoperimetric inequality and Sobolev spaces are available.
We refer the reader to~\cite{FPSS24}*[Sects.~2.3.3 and~6.1], and also to~\cite{FPSS24}*[Sect.~7.1] for a plainer approach in the setting of metric-measure spaces. 
\end{remark}






\begin{bibdiv}
\begin{biblist}


%

\bib{ACV08}{article}{
   author={Ambrosio, Luigi},
   author={Colesanti, Andrea},
   author={Villa, Elena},
   title={Outer Minkowski content for some classes of closed sets},
   journal={Math. Ann.},
   volume={342},
   date={2008},
   number={4},
   pages={727--748},
   review={\MR{2443761}},
   doi={10.1007/s00208-008-0254-z},
}

\bib{Avi97}{inproceedings}{
    AUTHOR = {Avinyo, A.},
     TITLE = {Isoperimetric constants and some lower bounds for the
              eigenvalues of the {$p$}-{L}aplacian},
 BOOKTITLE = {Proceedings of the {S}econd {W}orld {C}ongress of {N}onlinear
              {A}nalysts, {P}art 1 ({A}thens, 1996)},
   JOURNAL = {Nonlinear Anal.},
    VOLUME = {30},
      YEAR = {1997},
    NUMBER = {1},
     PAGES = {177--180},
}

\bib{BS22}{article}{
   author={Bessas, Konstantinos},
   author={Stefani, Giorgio},
   title={Non-local $BV$ functions and a denoising model with $L^1$ fidelity},
   journal={Adv. Calc. Var.},
   date={to appear},
   doi={10.1515/acv-2023-0082},
}

\bib{BP18}{article}{
   author={Bobkov, Vladimir},
   author={Parini, Enea},
   title={On the higher Cheeger problem},
   journal={J. Lond. Math. Soc. (2)},
   volume={97},
   date={2018},
   number={3},
   pages={575--600},
   review={\MR{3816400}},
   doi={10.1112/jlms.12119},
}

\bib{BBF20}{article}{
   author={Bogosel, Beniamin},
   author={Bucur, Dorin},
   author={Fragalà, Ilaria},
   title={Phase field approach to optimal packing problems and related
   Cheeger clusters},
   journal={Appl. Math. Optim.},
   volume={81},
   date={2020},
   number={1},
   pages={63--87},
   review={\MR{4058408}},
   doi={10.1007/s00245-018-9476-y},
}

\bib{Bra02book}{book}{
   author={Braides, Andrea},
   title={$\Gamma$-Convergence for Beginners},
   series={Oxford Lecture Series in Mathematics and its Applications},
   volume={22},
   publisher={Oxford University Press, Oxford},
   date={2002},
   pages={xii+218},
   review={\MR{1968440}},
   doi={10.1093/acprof:oso/9780198507840.001.0001},
}

\bib{BLP14}{article}{
   author={Brasco, L.},
   author={Lindgren, E.},
   author={Parini, E.},
   title={The fractional Cheeger problem},
   journal={Interfaces Free Bound.},
   volume={16},
   date={2014},
   number={3},
   pages={419--458},
   review={\MR{3264796}},
   doi={10.4171/IFB/325},
}

\bib{BF19}{article}{
   author={Bucur, Dorin},
   author={Fragalà, Ilaria},
   title={Proof of the honeycomb asymptotics for optimal Cheeger clusters},
   journal={Adv. Math.},
   volume={350},
   date={2019},
   pages={97--129},
   review={\MR{3946306}},
   doi={10.1016/j.aim.2019.04.036},
}

\bib{CC07}{article}{
   author={Carlier, Guillaume},
   author={Comte, Myriam},
   title={On a weighted total variation minimization problem},
   journal={J. Funct. Anal.},
   volume={250},
   date={2007},
   number={1},
   pages={214--226},
   review={\MR{2345913}},
   doi={10.1016/j.jfa.2007.05.022},
}

\bib{C17}{article}{
   author={Caroccia, M.},
   title={Cheeger $N$-clusters},
   journal={Calc. Var. Partial Differential Equations},
   volume={56},
   date={2017},
   number={2},
   pages={Paper No. 30, 35},
   review={\MR{3610172}},
   doi={10.1007/s00526-017-1109-9},
}

\bib{CL19}{article}{
   author={Caroccia, Marco},
   author={Littig, Samuel},
   title={The Cheeger-$N$-problem in terms of BV-functions},
   journal={J. Convex Anal.},
   volume={26},
   date={2019},
   number={1},
   pages={33--47},
   review={\MR{3847212}},
}

\bib{Che70}{article}{
   author={Cheeger, Jeff},
   title={A lower bound for the smallest eigenvalue of the Laplacian},
   conference={
      title={Problems in analysis},
      address={Sympos. in honor of Salomon Bochner, Princeton Univ.,
      Princeton, N.J.},
      date={1969},
   },
   book={
      publisher={Princeton Univ. Press, Princeton, NJ},
   },
   date={1970},
   pages={195--199},
   review={\MR{0402831}},
}

\bib{CT17}{article}{
   author={Comi, Giovanni E.},
   author={Torres, Monica},
   title={One-sided approximation of sets of finite perimeter},
   journal={Atti Accad. Naz. Lincei Rend. Lincei Mat. Appl.},
   volume={28},
   date={2017},
   number={1},
   pages={181--190},
   review={\MR{3621776}},
   doi={10.4171/RLM/757},
}

\bib{FPSS24}{article}{
   author={Franceschi, Valentina},
   author={Pinamonti, Andrea},
   author={Saracco, Giorgio},
   author={Stefani, Giorgio},
   title={The Cheeger problem in abstract measure spaces},
   date={2024},
   volume={119},
   number={1},
   journal={J. Lond. Math. Soc. (2)},
   doi={10.1112/jlms.12840},
}

\bib{FPS23a}{article}{
   author={Franceschi, Valentina},
   author={Pratelli, Aldo},
   author={Stefani, Giorgio},
   title={On the Steiner property for planar minimizing clusters. The anisotropic case},
   journal={J. \'{E}c. Polytech. Math.},
   volume={10},
   date={2023},
   pages={989--1045},
   review={\MR{4600399}},
   doi={10.5802/jep.238},
}

\bib{FPS23b}{article}{
   author={Franceschi, Valentina},
   author={Pratelli, Aldo},
   author={Stefani, Giorgio},
   title={On the Steiner property for planar minimizing clusters. The
   isotropic case},
   journal={Commun. Contemp. Math.},
   volume={25},
   date={2023},
   number={5},
   pages={Paper No. 2250040, 29},
   review={\MR{4579987}},
   doi={10.1142/S0219199722500407},
}

\bib{FP14}{article}{
   author={Franzina, Giovanni},
   author={Palatucci, Giampiero},
   title={Fractional $p$-eigenvalues},
   journal={Riv. Math. Univ. Parma (N.S.)},
   volume={5},
   date={2014},
   number={2},
   pages={373--386},
   review={\MR{3307955}},
}

\bib{FR23}{article}{
   author={Froyland, Gary},
   author={Rock, Cristopher P.},
   title={Higher Cheeger ratios of features in Laplace--Beltrami eigenfunctions},
   date={2023},
   status={arXiv preprint},
   doi={10.48550/arXiv.2308.04850},
}

\bib{Gri89}{article}{
   author={Grigor'yan, Alexander},
   title={Isoperimetric inequalities and capacities on Riemannian manifolds},
   conference={
      title={The Maz'ya anniversary collection, Vol. 1},
      address={Rostock},
      date={1998},
   },
   book={
      series={Oper. Theory Adv. Appl.},
      volume={109},
      publisher={Birkh\"{a}user, Basel},
   },
   date={1999},
   pages={139--153},
   review={\MR{1747869}},
}

\bib{GHL23}{article}{
   author={Gui, Changfeng},
   author={Hu, Yeyao},
   author={Li, Qinfeng},
   title={On smooth interior approximation of sets of finite perimeter},
   journal={Proc. Amer. Math. Soc.},
   volume={151},
   date={2023},
   number={5},
   pages={1949--1962},
   review={\MR{4556191}},
   doi={10.1090/proc/15640},
}

\bib{KF03}{article}{
   author={Kawohl, B.},
   author={Fridman, V.},
   title={Isoperimetric estimates for the first eigenvalue of the $p$-Laplace operator and the Cheeger constant},
   journal={Comment. Math. Univ. Carolin.},
   volume={44},
   date={2003},
   number={4},
   pages={659--667},
   review={\MR{2062882}},
}

\bib{KL06}{article}{
   author={Kawohl, Bernd},
   author={Lindqvist, Peter},
   title={Positive eigenfunctions for the $p$-Laplace operator revisited},
   journal={Analysis (Munich)},
   volume={26},
   date={2006},
   number={4},
   pages={545--550},
   review={\MR{2329592}},
   doi={10.1524/anly.2006.26.4.545},
}

\bib{LW97}{article}{
   author={Lefton, Lew},
   author={Wei, Dongming},
   title={Numerical approximation of the first eigenpair of the
   $p$-Laplacian using finite elements and the penalty method},
   journal={Numer. Funct. Anal. Optim.},
   volume={18},
   date={1997},
   number={3-4},
   pages={389--399},
   review={\MR{1448898}},
   doi={10.1080/01630569708816767},
}

\bib{Leo15}{article}{
   author={Leonardi, Gian Paolo},
   title={An overview on the Cheeger problem},
   conference={
      title={New Trends in Shape Optimization},
   },
   book={
      series={Internat. Ser. Numer. Math.},
      volume={166},
      publisher={Birkh\"{a}user/Springer, Cham},
   },
   date={2015},
   pages={117--139},
   review={\MR{3467379}},
   doi={10.1007/978-3-319-17563-8\_6},
}

\bib{LNS17}{article}{
   author={Leonardi, Gian Paolo},
   author={Neumayer, Robin},
   author={Saracco, Giorgio},
   title={The Cheeger constant of a Jordan domain without necks},
   journal={Calc. Var. Partial Differential Equations},
   volume={56},
   date={2017},
   number={6},
   pages={Paper No. 164, 29},
   review={\MR{3719067}},
   doi={10.1007/s00526-017-1263-0},
}

\bib{LP16}{article}{
   author={Leonardi, Gian Paolo},
   author={Pratelli, Aldo},
   title={On the Cheeger sets in strips and non-convex domains},
   journal={Calc. Var. Partial Differential Equations},
   volume={55},
   date={2016},
   number={1},
   pages={Art. 15, 28},
   review={\MR{3451406}},
   doi={10.1007/s00526-016-0953-3},
}

\bib{LS18}{article}{
   author={Leonardi, Gian Paolo},
   author={Saracco, Giorgio},
   title={The prescribed mean curvature equation in weakly regular domains},
   journal={NoDEA Nonlinear Differential Equations Appl.},
   volume={25},
   date={2018},
   number={2},
   pages={Paper No. 9, 29},
   review={\MR{3767675}},
   doi={10.1007/s00030-018-0500-3},
}

\bib{Lin93}{article}{
   author={Lindqvist, Peter},
   title={On nonlinear Rayleigh quotients},
   journal={Potential Anal.},
   volume={2},
   date={1993},
   number={3},
   pages={199--218},
   issn={0926-2601},
   review={\MR{1245239}},
   doi={10.1007/BF01048505},
}

\bib{Mag12book}{book}{
   author={Maggi, Francesco},
   title={Sets of Finite Perimeter and Geometric Variational Problems},
   series={Cambridge Studies in Advanced Mathematics},
   volume={135},
   publisher={Cambridge University Press, Cambridge},
   date={2012},
   pages={xx+454},
   review={\MR{2976521}},
   doi={10.1017/CBO9781139108133},
}

\bib{Maz62-1}{article}{
   author={Maz'ja, V. G.},
   title={The negative spectrum of the higher-dimensional Schr\"{o}dinger
   operator},
   language={Russian},
   journal={Dokl. Akad. Nauk SSSR},
   volume={144},
   date={1962},
   pages={721--722},
   review={\MR{0138880}},
}

\bib{Maz62-2}{article}{
   author={Maz'ja, V. G.},
   title={On the solvability of the Neumann problem},
   language={Russian},
   journal={Dokl. Akad. Nauk SSSR},
   volume={147},
   date={1962},
   pages={294--296},
   review={\MR{0144058}},
}

\bib{Maz11book}{book}{
   author={Maz'ya, Vladimir},
   title={Sobolev Spaces with Applications to Elliptic Partial Differential Equations},
   series={Grundlehren der mathematischen Wissenschaften [Fundamental Principles of Mathematical Sciences]},
   volume={342},
   edition={augmented edition},
   publisher={Springer, Heidelberg},
   date={2011},
   pages={xxviii+866},   
   review={\MR{2777530}},
   doi={10.1007/978-3-642-15564-2},
}

\bib{NPST22}{article}{
   author={Novaga, Matteo},
   author={Paolini, Emanuele},
   author={Stepanov, Eugene},
   author={Tortorelli, Vincenzo Maria},
   title={Isoperimetric clusters in homogeneous spaces via concentration
   compactness},
   journal={J. Geom. Anal.},
   volume={32},
   date={2022},
   number={11},
   pages={Paper No. 263, 23},
   review={\MR{4470300}},
   doi={10.1007/s12220-022-01009-8},
}

\bib{NPST23}{article}{ 
   author={Novaga, Matteo},
   author={Paolini, Emanuele},
   author={Stepanov, Eugene},
   author={Tortorelli, Vincenzo Maria},
   title={Isoperimetric planar clusters with infinitely many regions},
   journal={Netw. Heterog. Media},
   volume={18},
   date={2023},
   number={3},
   pages={1226--1235},
   review={\MR{4629629}},
   doi={10.3934/nhm.2023053},
}

\bib{P10}{article}{
   author={Parini, Enea},
   title={The second eigenvalue of the $p$-Laplacian as $p$ goes to 1},
   journal={Int. J. Differ. Equ.},
   date={2010},
   pages={Art. ID 984671, 23},
   review={\MR{2575290}},
   doi={10.1155/2010/984671},
}

\bib{Par11}{article}{
    author={Parini, Enea},
    title={An introduction to the {C}heeger problem},
    journal={Surv. Math. Appl.},
    volume={6},
    date = {2011},
    pages={9--21},
    review={\MR{2832554}},
}

\bib{PS17}{article}{
   author={Pratelli, Aldo},
   author={Saracco, Giorgio},
   title={On the generalized Cheeger problem and an application to 2d
   strips},
   journal={Rev. Mat. Iberoam.},
   volume={33},
   date={2017},
   number={1},
   pages={219--237},
   review={\MR{3615449}},
   doi={10.4171/RMI/934},
}

\bib{S15}{article}{
   author={Schmidt, Thomas},
   title={Strict interior approximation of sets of finite perimeter and
   functions of bounded variation},
   journal={Proc. Amer. Math. Soc.},
   volume={143},
   date={2015},
   number={5},
   pages={2069--2084},
   review={\MR{3314116}},
   doi={10.1090/S0002-9939-2014-12381-1},
}

\end{biblist}
\end{bibdiv}

\end{document}